\numberwithin{equation}{section}
\theoremstyle{plain}
\newtheorem{theorem}[equation]{Theorem}
\newtheorem{corollary}[equation]{Corollary}
\newtheorem{lemma}[equation]{Lemma}
\newtheorem{sublemma}[equation]{Sublemma}
\newtheorem{proposition}[equation]{Proposition}
\theoremstyle{definition}
\newtheorem{definition}[equation]{Definition}
\theoremstyle{remark}
\newtheorem{remark}[equation]{Remark}
\newcommand{\divisor}{\operatorname{div}}
\newcommand{\pour}{\ar@{}[ur]|(0.2){\text{\pigpenfont G}}}
\newcommand{\podr}{\ar@{}[dr]|(0.2){\text{\pigpenfont A}}}
\DeclareMathOperator{\Ker}{Ker}
\DeclareMathOperator{\Image}{Im}
\DeclareMathOperator{\Coker}{Coker}
\DeclareMathOperator{\Supp}{Supp}
\DeclareMathOperator{\codim}{codim}
\DeclareMathOperator{\Ann}{Ann}
\newcommand{\cal}{\mathcal}
\newcommand{\bGm}{{\mathbb G_m}}
\newcommand{\pri}[1]{{#1^\prime}}
\newcommand{\ppri}[1]{{#1^{\prime\prime}}}
\newcommand{\prl}{\mathbb{P}^1}
\newcommand{\affl}{\mathbb{A}^1}
\newcommand{\aff}{{\mathbb{A}}}
\newcommand{\bZ}{\mathbb Z}
\newcommand{\bX}{\mathcal{X}}
\newcommand{\bXp}{\pri{\mathcal{X}}}
\newcommand{\bXpp}{\ppri{\mathcal{X}}}
\newcommand{\ovbX}{ \overline{ \mathcal X } }
\newcommand{\ovbXp}{ \overline{ \pri{\mathcal X} } }
\newcommand{\ovbXpp}{ \overline{ \ppri{\mathcal X} } }
\newcommand{\ovXp}{\overline{X^\prime}}
\newcommand{\ovX}{\overline X}
\newcommand{\ovpi}{\overline\pi}
\newcommand{\ovvarpi}{\overline\varpi}
\newcommand{\bZp}{\cal Z^\prime}
\newcommand{\bZpp}{\ppri{\mathcal{Z}}}
\newcommand{\tXp}{\tilde{X}^\prime}
\newcommand{\ovtXp}{\overline{X}^\prime}
\newcommand{\ovtX}{\overline{X}}
\newcommand{\tZp}{\tilde{X}^\prime}
\newcommand{\ovtpi}{\overline{\tilde \pi}}
\begin{document}

\title{Strictly homotopy invariance of Nisnevich sheaves with GW-transfers}
\author{Andrei Druzhinin}
\address{Chebyshev Laboratory, St. Petersburg State University, 14th Line V.O., 29B, Saint Petersburg 199178 Russia}
\email{andrei.druzh@gmail.com}
\thanks{Research is supported by the Russian Science Foundation grant 14-21-00035}
\keywords{}

\begin{abstract}

The strictly homotopy invariance of the associated Nisnevish sheave $\widetilde{\cal F}_{Nis}$ of a 
homotopy invariant presheave $\cal F$ with GW-transfers (or Witt-transfers) on the category of smooth varieties over a prefect field $k$, $char\,k \neq 2$,
is proved, 
i.e. the isomorphism $$H^i_{Nis}(\affl\times X,\widetilde{\cal F}_{Nis})\simeq H^i_{Nis}(X,\widetilde{\cal F}_{Nis})$$ for any $X\in Sm_k$ is obtained.
This theorem is necessary 
for the construction of the triangulated category of GW-motives $\mathbf{DM}^{GW}(k)$ and Witt-motives $\mathbf{DM}^W(k)$ by the Voevodsky-Suslin method originally used for the construction of the category of motives $\mathbf{DM}(k)$. 

In particular, the result of the article gives the direct prove of the  strictly homotopy invariance of 
the Nisnevich sheaves associated to hermitian K-theory and Witt-groups
(without using of the representability of these cohomology theories in the motivic homotopy category $\mathbf H_{\affl}(k)$ proved by Hornbostel \cite{Horn_ReprKOWitt});
and on other side
the strictly homotopy invariance theorem proved here and the representability criteria proved in \cite{Horn_ReprKOWitt} 
implies that cohomologies $H^i_{nis}(-,\widetilde{\mathcal F}_{nis})$ of the associated sheaf of a homotopy invariant presheave with GW-(Witt-)transfers $\mathcal F$ are representable in $\mathbf H_{\affl}(k)$.
\end{abstract}

\maketitle

\section{Introduction.}

In this article we prove that the Nisnevich sheave associated to a homotopy invariant presehave with GW-transfers
is strictly homotopy invariant. 
This result is necessarily
for the construction of the category GW-motives $\mathbf{DM}^{GW}(k)$ 
by the Voevodsky-Suslin-method originally used for construction of the category of motives $\mathbf{DM}(k)$ (see \cite{VSF_CTMht_Ctpretr}, \cite{VSF_CTMht_DM}, \cite{SV_Bloch-Kato}, \cite{MVW_LectMotCohom}). 
Saying Voevodsky-Suslin-method we imply that we start with some additive category of correspondences (GW-correspondences $GWCor_k$ of Witt-correspondences $WCor_k$),
and define $\mathbf{DM}^{GW}$ as $\bGm^{\wedge 1}$-stabilisation of the category of effective GW-motives $\mathbf{DM}^{GW}_\mathrm{eff}$ and define 
$\mathbf{DM}^{GW}_\mathrm{eff}$ 
as the full subcategory in derived category of the category of sheaves with GW-transfers,
spanned by motivic complexes, i.e. complexes with homotopy invariant sheaf cohomology (and similarly for Witt-motives).

By definition Nisnevich sheaves with GW-transfers (Witt-transfers) are presheaves with GW-transfers (Witt-transfers) that are sheaves and 
presheaves with GW-transfers (Witt-transfers) are just additive presheaves on the category of GW-correspondences $GWCor_k$,
To define in short the category of GW-correspondences (Witt-correspondences) let's say that for affine schemes $X,Y$ the morphism group$GWCor(X,Y)$ is the Grothendieck-Witt-group of the quadratic spaces 
$(P,q)$, where $P\in k[Y\times X]-mod$, that are finitely generated projective over $k[X]$,
and $q\colon P\simeq Hom_{k[X]}(P,k[X])$ is $k[Y\times X]$-linear isomorphism.
The category Witt-correspondences is defined in the same way using Witt-groups.
And in general case we replace $k[Y\times X]$-module $P$ by coherent sheave on $X\times Y$ 
 used in the definition of the category of $K_0$ correspondences studied by Walker in \cite{W_MotComK} 
and $K$-correspondences studied by Garkusha and Panin in \cite{GarPan-Kmot}.
Namely we consider the coherent sheave $P$ on $X\times Y$ that support is finite over $X$ and that direct image on $X$ is locally free coherent sheave of finite rank.

According to Grothendieck's idea any category of motives 
plays role of the 'universal' cohomology theory for some class of cohomology theories, 
that means that all cohomology theories of this class have canonical lift to functors defined on the category of motives,
and the category of motives provides 'geometrical' instruments for computations of this theories.
For triangulated categories of motives this means 
that this cohomology theories are represented and representing objects generates this triangulated category.
In this sense the categories of GW-motives (or Witt-motives) are natural containers 
for the homotopy invariant Nisnevich excisive cohomology theories equipped with GW-transfers (or Witt-transfers), and the main examples of such theories are the higher hermitian K-theory $GW^i(-)$ (or the derived Witt-groups $W^i(-)$) and cohomologies of the associated Nisnevich sheaves. 
So this categories gives the geometrical framework for these cohomology theories, and in particular, this allows to apply the Voevodsky method of the proof of the Gersten conjecture (used originally for pretheories with transfers defined by the category $Cor$) to get the alternative proof of the Gersten conjecture for hermitian K-theory and Witt-groups.
The categories $\mathbf{DM}^{GW}(k)$ (and $\mathbf{DM}^W$) can be useful for a construction of spectral sequences converging to hermitian K-theory like that the categories of $K_0$-motives and $K$-motives ware used for the Grayson motivic spectral sequences \cite{Suslin-GraysonSpectralSeq}, \cite{GarPan-MotSpSeq}, \cite{GarPan-Kmot}.

Let's note also that 
it follows from the results of \cite{FB_EffSpMotCT}  and \cite{GG_RecRatStMot} that 
the category $\mathbf{DM}^{GW}(k)$ is rationally isomorphic to the stable motivic category $\mathbf{SH}(k)_{\mathbb Q}$.
The category $\mathbf{DM}^{W}(k)$ is hypothetically equivalent to the category of Witt-motives
constructed by Ananievsky, Levine, Panin in \cite{ALP_WittSh-etsinvert} via the category modules over the Witt-ring sheaf,
and so $\mathbf{DM}^{W}(k)$ is rationally equivalent to the minus part $\mathbf{SH}^-(k)_{\mathbb Q}$ of the stable motivic homotopy category.

Since as mentioned above the category of effective GW-motives (Witt-motives) $\mathbf{DM}^{GW}_{\mathrm{eff}}$ $(\mathbf{DM}^W_{\mathrm{eff}}$) should satisfy the universal property for the class of homotopy invariant Nisnevich excisive cohomology theories with GW-transfers (Witt-transfers),
it is natural to define the categories of GW-motives and Witt-motive as localisation of the derived category of the category of Nisnevich sheaves with GW-transfers (Witt-transfers) in respect to $\affl$-equivalences $L_{\affl}\colon \mathbf{D}(ShNisGWtr)\to \mathbf{DM}^{GW}_{\mathrm{eff}}(k).$
The important 
advantage of the Voevodsky-Suslin method is that in the case of a perfect base filed
this method provides the computation of the right adjoint functor $R_{\affl}\colon \mathbf{DM}^{GW}_{\mathrm{eff}}(k)\to \mathbf{D}(ShNisGWtr)$
as the full embedding by the subcategory of motivic complexes defined above. 
Then 
the localisation functor $L_{\affl}$ is equal to internal Hom-functor $\mathcal Hom(\Delta^\bullet,-)$
represented by the complex corresponding to infinite affine simplex $\Delta^\bullet$. 
Thus this computation of the category $\mathbf{DM}^{GW}_{\mathrm{eff}}$ ($\mathbf{DM}^{W}_{\mathrm{eff}}$) and functors $L_{\affl}$ and $R_{\affl}$ gives an instrument for computation of $Hom$-groups in the category of effective GW-motives (Witt-motives) and in particular it can be useful for the computations of the mentioned cohomology theories.

The critical point in the computation of the the functor $R_{\affl}$
according to the Voevodsky-Suslin method is the following theorem,
that is the main result of the article:
\begin{theorem}\label{th:intr:StrHomInv}
Nisnevich sheafification $\widetilde{\cal F}_{nis}$ of any homotopy invariant presheaf $\cal F$ with GW-transfers is strictly homotopy invariant.
\end{theorem}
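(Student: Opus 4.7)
The plan is to adapt the Voevodsky--Suslin strategy for the classical category of finite correspondences $Cor_k$ to the category $GWCor_k$, following the same three-stage template: (a) show that sheafification preserves transfers and homotopy invariance, (b) establish injectivity on essentially smooth Henselian local schemes together with Nisnevich excision, (c) deduce strict homotopy invariance from a Gersten/Cousin-type resolution.

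First I would verify that for a homotopy invariant presheaf with GW-transfers the associated Nisnevich sheaf $\widetilde{\cal F}_{nis}$ admits a unique compatible structure of a sheaf with GW-transfers, and is again homotopy invariant. Both statements are formal once one knows that a GW-correspondence from $X$ to $Y$ is compatible with Nisnevich covers $Y'\to Y$, which follows from the fact that the coherent sheaf defining the correspondence is finite over $X$ and the quadratic form descends along Nisnevich covers. The homotopy invariance passes to the sheafification by the usual argument that $\widetilde{\cal F}_{nis}$ is a filtered colimit of subpresheaves obtained from $\cal F$ by sections over Nisnevich covers, each step preserving the $\affl$-invariance.

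The core of the proof is Stage (b). Given an essentially smooth Henselian local scheme $U$ with generic point $\eta$, the two key statements to prove are: (i) injectivity of $\cal F(U)\to \cal F(\eta)$, and (ii) exactness of the Mayer--Vietoris sequence for every Nisnevich distinguished square over $U$. Both reduce, by Voevodsky's standard argument, to producing a GW-correspondence that splits a given finite closed subscheme $Z\subset \affl\times U$ away from the zero section $s\colon U\hookrightarrow \affl\times U$. The geometric input is the compactification at infinity and the standard-triple construction: one chooses a smooth relative curve $\overline{\mathcal C}\to U$ together with a finite divisor realising $Z$, and uses the identification $\Pic(\overline{\mathcal C}/U)\simeq \Pic(\overline{\mathcal C}\setminus Z_\infty/U)$ to produce a well-defined transfer. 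To lift this to a GW-correspondence one must equip the coherent sheaf supported on $Z$ with a non-degenerate symmetric bilinear form and check that the resulting trace in $GWCor_k$ composes correctly. The technical ingredient here is a Scharlau-type transfer for quadratic forms along finite locally free maps, as developed by Calmès--Fasel.

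Once (i) and (ii) are in hand, the final Stage (c) is formal. The Cousin complex with respect to the coniveau filtration yields a flasque resolution of $\widetilde{\cal F}_{nis}$ whose terms are sums of skyscraper sheaves at points of $X$ with values determined by $\cal F$ on fields; since $\cal F$ is homotopy invariant and the Cousin complex is compatible with $\affl$-base change, the coniveau spectral sequence collapses to give $\coh^i_{Nis}(\affl\times X,\widetilde{\cal F}_{nis})\simeq \coh^i_{Nis}(X,\widetilde{\cal F}_{nis})$. The main obstacle, and the real content of the paper, will be the construction in Stage (b) of the GW-correspondence splitting $Z$ away from the zero section: the cycle-theoretic construction produces only the underlying support, and building the quadratic form on this support, verifying its compatibility with composition in $GWCor_k$, and controlling its behaviour under the standard triple operations, is where all the new technical work lies.
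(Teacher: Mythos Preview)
Your overall template is right, and you correctly identify that the substantive work lies in Stage~(b), constructing the quadratic forms on the supports of the correspondences. But there is a genuine gap in Stage~(a), and Stage~(c) diverges from the paper's argument in a way that would require additional work.

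\textbf{The gap in Stage~(a).} Homotopy invariance of the sheafification $\widetilde{\cal F}_{nis}$ is \emph{not} formal, and the filtered-colimit argument you sketch does not work: a Nisnevich cover of $\affl_X$ need not be of the form $\affl_U$ for $U\to X$ a Nisnevich cover, so there is no reason the sheafification step on $\affl_X$ should reduce to sheafification on $X$. In the paper this is Theorem~\ref{th:HIAssZar}, and its proof genuinely uses the injectivity theorem (Theorem~\ref{th:locZar-inj}) together with the identification $\widetilde{\cal F}_{nis}\big|_{\affl_K}\simeq \cal F\big|_{\affl_K}$ for geometric extensions $K/k$ (Theorem~\ref{th:affcoh}), the latter already requiring both excision theorems. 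So Stage~(a) is not a preliminary formality but rather sits downstream of Stage~(b). This matters for the logical order of your argument: you cannot assume $\widetilde{\cal F}_{nis}$ is homotopy invariant before the excision and injectivity results are in hand.

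\textbf{The deduction in Stage~(c).} You propose a Cousin/coniveau resolution on a general smooth $X$; the paper instead runs the Leray spectral sequence for the projection $pr\colon (\affl\times X)_{nis}\to X_{nis}$ and shows $R^i pr_*(\widetilde{\cal F}_{nis})=0$ for $i>0$ by proving vanishing of $H^i_{nis}(\affl_U,\widetilde{\cal F}_{nis})$ for $U$ local henselian (Lemma~\ref{lm:strinv-locbase}). That lemma is proved by induction on $i$, using the excision isomorphisms to identify the cokernel sheaf $\Coker(\widetilde{\cal F}_{nis}\to j_*j^*\widetilde{\cal F}_{nis})$ with $i_*(\cal F_{-1})$ and thereby strip off codimension-one closed subsets. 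Your Cousin-complex route is not wrong in principle, but exactness of the Cousin complex for $\widetilde{\cal F}_{nis}$ (i.e.\ Gersten-type exactness) is itself a nontrivial consequence of the same circle of excision and purity results, not an input you can take for granted; and ``compatibility with $\affl$-base change'' of the coniveau filtration needs a purity statement you have not stated. The paper's Leray argument avoids this by working only with the relative affine line over a local base, where a length-two flasque resolution is available directly from the excision theorems.

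On Stage~(b) itself: the paper's mechanism for producing the quadratic form is not the Calm\`es--Fasel/Scharlau transfer you cite but rather the construction of Proposition~\ref{prop:constrOrCurFQCor}, which uses an oriented relative curve (trivialised relative canonical class) and a relatively finite function, invoking the Ojanguren--Panin duality isomorphism. This is a point of technical divergence but not a gap in your outline.
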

Similar to the original case of $Cor$-correspondences used in the construction of the category $\mathbf{DM}^-(k)$ 
the proof of the theorem above is based on the computation of 
cohomology groups on relative affine line $\affl_U$ over a local base
of the Nisnevich sheaf $\widetilde{\cal F}_{nis}$
associated with homotopy invariant presheave with GW-transfers
(lemma \ref{lm:strinv-locbase}): 
\begin{equation}\label{eq:cohA1}
\widetilde{\cal F}_{nis}(\affl_U)\simeq \cal F(\affl_U), H_{nis}^i(\affl_U,\widetilde{\cal F}_{nis})=0,\text{ for }i>0.
\end{equation}
This equalities essentially uses transfers defined by considered category of correspondences $GWCor_k$, 
and proof is based on 
the explicit construction of some GW-correspondences (Witt-correspondences) between etale coverings of open subschemes in relative affine.
So the proof differs for different categories of correspondences,
and the main innovative ingredients in the proof are 
geometrical constructions that allows to control 'orientation' of correspondences and to define the required quadratic forms,
and this is the most essential novelty of the work.

The role of GW-correspondences for equality \ref{eq:cohA1} can be simply explained if we 
think about transfers on presheaves
as representations of some 'ring'
corresponding to the category of correspondences\footnote{here we speak about the informal analogy rather then a strict mathematical notion, though indeed formally we can think, for example, about the corresponding ring spectra in the motivic homotopy.}. 
Then using analogy between
(homotopy invariant) (pre-)sheaves with transfers on the category of smooth schemes and
coherent (pre-)sheaves on some scheme,
we see that equality \eqref{eq:cohA1} is analogously to the fact
that any coherent shave on affine scheme is just a module over the function ring 
and cohomologies of coherent sheaves on affine schemes are zero.
The mentioned fact about coherent sheaves relates to the existence of unit decomposition in the ring of functions.
In the same sense equality \eqref{eq:cohA1} relates to 
the existence of a '$\affl$-homotopy decomposition' in the category of correspondences of the identity on the affine line, 
i.e. a lift along the Nisnevich covering $\cal U\to \affl_U$.
in the category of correspondences up to $\affl$-homotopy
of the identity morphism $id_{\affl_U}$ 
So presented here constructions of correspondences shortly speaking gives such decomposition of unit in the category of GW-(Witt-)correspondences up to $\affl$-homotopy.

Formally proof of equality \ref{eq:cohA1} are based
on the following excision and injectivity theorems:
\begin{theorem}[etale excision, theorem \ref{th:et-ex}]\label{th:intr:EtEx}
For a homotopy invariant presheave with GW-transfers $\cal F$,
etale morphism of essentially smooth local schemes $\pi\colon  V^\prime\to V$,
closed subscheme $Z\subset V$ of codimension 1,
such that  
$\pi$ induces isomorphism between 
$Z$ and its preimage  $Z^\prime=\pi^{-1}(Z)$,
$\pi$ induces the isomorphism  
   $$\pi^*\colon \frac{\cal F(V-Z)}{\cal F(V)}  
              \stackrel{\sim}{\to}  
                     \frac{\cal F(V^\prime-Z^{\prime})}{\cal F(V^\prime)},$$
\end{theorem}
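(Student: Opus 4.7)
The plan is to prove étale excision by constructing, in the category of GW-correspondences, a homotopy inverse to $\pi^*$ on the quotient $\cal F(V'-Z')/\cal F(V')$, following the template of Voevodsky's argument for $\mathrm{Cor}$-presheaves while tracking the quadratic structure throughout.

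First I would reduce the general étale morphism $\pi\colon V'\to V$ to a standard geometric presentation via a Gabber--Panin-style argument. Since $V$ is essentially smooth local and $Z\subset V$ has codimension $1$, after Nisnevich-equivalent replacements one can embed $V'$ as a neighborhood of $Z\times\{0\}$ inside a finite cover $p\colon \cW\to\affl_V$ that restricts to an isomorphism over $Z$. This is the analogue of the standard triple used by Voevodsky and reduces the problem to producing a single GW-correspondence from $V$ into this neighborhood.

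Second, on this standard model I would construct a GW-correspondence $\alpha\in\mathrm{GWCor}(V,V')$ whose underlying coherent sheaf is the pushforward of the structure sheaf of an effective Cartier divisor $D\subset\affl_V$ that is finite over $V$, pulled back through the open immersion $V'\subset\cW$. The quadratic form on $p_*\struct_D$ comes from the trace pairing $\tr\colon p_*\struct_D\otimes p_*\struct_D\to\struct_V$, whose nondegeneracy near $Z$ is guaranteed by the étaleness of $\pi$ there. A relative version of this construction over $\affl$ produces an $\affl$-homotopy of GW-correspondences between $\pi\circ\alpha$ and the identity $\id_V$ modulo terms supported in $V-Z$.

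Third, I would conclude using homotopy invariance. The correspondence $\alpha$ restricts to a GW-correspondence from $V-Z$ to $V'-Z'$, giving a map $\alpha^*\colon\cal F(V'-Z')\to\cal F(V-Z)$. Homotopy invariance of $\cal F$ then forces both $\alpha^*\circ\pi^*$ and $\pi^*\circ\alpha^*$ to act as the identity on the respective quotients, yielding the desired isomorphism. The main obstacle, and the novelty the author signals in the introduction, is the quadratic refinement of the second step. In the $\mathrm{Cor}$-setting one merely writes down a cycle with multiplicities; here one must produce a coherent sheaf equipped with a nondegenerate symmetric bilinear form whose class in $\mathrm{GW}$ represents the identity modulo $\affl$-homotopy. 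This forces one to choose the geometric construction carefully: orientations must be controlled through the standard triple, the trace form must be verified to be nondegenerate on the relevant loci, and the composition $\pi\circ\alpha$ must realize the correct class (rather than some twist) in $\mathrm{GWCor}(V,V)$. These are the "geometrical constructions that allow to control orientation" singled out in the introduction.
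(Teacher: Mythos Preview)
Your overall strategy---construct a GW-correspondence $\alpha$ from $V$ to $V'$ that is $\affl$-homotopy inverse to $\pi$ on pairs---matches the paper's, but there are two genuine gaps.

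First, the quadratic form you propose does not work. The trace pairing $\tr\colon p_*\struct_D\otimes p_*\struct_D\to\struct_V$ is nondegenerate precisely where $D\to V$ is \'etale; the divisors $D$ arising here are vanishing loci of relatively finite functions on a curve and will typically ramify away from $Z$, so the trace form is degenerate on $D$ and does not define a class in $Q(\cal P(D\to V))$. The paper's construction (Proposition~\ref{prop:constrOrCurFQCor}) is different: one first trivializes the relative canonical sheaf $\omega_S(\cal C)$ of the curve (this is the ``orientation''), then uses the Grothendieck--Ojanguren--Panin duality isomorphism $\omega(\cal C)\simeq \cal Hom_{k[\affl_S]}(k[\cal C],k[\affl_S])$ for the finite flat map $(f,pr)\colon \cal C\to\affl_S$, and finally restricts along the zero section. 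This produces a genuinely nondegenerate form on $k[Z(f)]$ regardless of ramification, and is exactly the ``control of orientation'' singled out in the introduction. Arranging the trivialization of $\omega_S(\cal C)$ is itself nontrivial and is handled by the Quillen-trick compactification in Lemma~\ref{lm:et-ex:relCures}.

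Second, you construct a single $\alpha$ with $\pi\circ\alpha\simA \id_V$ and then assert that ``both $\alpha^*\circ\pi^*$ and $\pi^*\circ\alpha^*$ act as the identity''. Only the first follows from your homotopy; the relation $\alpha\circ\pi\simA \id_{V'}$ is a separate statement requiring a separate construction. In the paper this is Proposition~\ref{prop:et-ex:sur:GWCor}, whose proof is noticeably more delicate than the injectivity side: after building the analogous correspondence one only obtains $\Phi\circ\pi_z\simA [i_{z'}\circ\langle u'\rangle]$ for some unit $u'$ on $V'$, and an extra argument (passing to a double cover where $u'/u$ becomes a square, combined with the already-proved injectivity Proposition~\ref{prop:et-ex:inj:GWCor}) is needed to kill this quadratic twist. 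Your sketch does not address this asymmetry.
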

\begin{theorem}[Zariski excision on relative affine line, theorem \ref{th:afflUzar-ex}]\label{th:intr:ZarEx}
For be homotopy invariant sheave with GW-transfers $\cal F$ 
Zariski open subvariety $V\subset\affl_U\colon V\supset 0_U$
for essential smooth local scheme $U$,
restriction homomorphism induce isomorphism 
    $$\frac{\cal F(\affl_U-0_U)}{\cal F(\affl_U)}\simeq \frac{\cal F(V-0_U)}{\cal F(V)}$$
\end{theorem}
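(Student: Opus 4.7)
The plan is to recognize the Zariski square in question as a Nisnevich distinguished square and then reduce the claim to the étale excision Theorem~\ref{th:intr:EtEx} via a Henselization (filtered-colimit) argument, supplementing where necessary with an explicit GW-correspondence construction in the spirit advertised in the introduction.

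\textbf{Step 1 (Nisnevich square).} The commutative square
\[
\xymatrix{
V - 0_U \ar[r] \ar[d] & V \ar[d] \\
\affl_U - 0_U \ar[r] & \affl_U
}
\]
is an elementary Nisnevich distinguished square: $V \hookrightarrow \affl_U$ is an open (hence \'etale) immersion, the closed complement is $0_U \subset V$, and $V \to \affl_U$ restricts to the identity on $0_U$. The assertion of the theorem is exactly Nisnevich excision for $\cal F$, evaluated on the quotient of sections that do not extend across $0_U$.

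\textbf{Step 2 (Passage to a Henselian model).} Let $H$ be the Henselization of $\affl_U$ along $0_U$. Since $0_U\subset V$ is open, $H$ is simultaneously the Henselization of $V$ along $0_U$. Write $H$ as a cofiltered limit of essentially smooth local schemes $V_\alpha$, each obtained as the localization at $0_U$ of an \'etale neighborhood $\widetilde V_\alpha\to\affl_U$ with $\widetilde V_\alpha\times_{\affl_U}0_U\stackrel{\sim}{\to}0_U$. For each $\alpha$, Theorem~\ref{th:intr:EtEx} applies to the \'etale morphism $V_\alpha\to(\affl_U)_{0_U}$ of essentially smooth local schemes and yields an isomorphism of the quotients
\[
\frac{\cal F(\affl_U-0_U)}{\cal F(\affl_U)}\;\longrightarrow\;\frac{\cal F(V_\alpha-0_U)}{\cal F(V_\alpha)}.
\]

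\textbf{Step 3 (Taking colimits).} Passing to the colimit over $\alpha$, using that a presheaf is continuous on cofiltered limits of affine schemes along \'etale transition maps, one identifies
\[
\frac{\cal F(\affl_U-0_U)}{\cal F(\affl_U)}\;\simeq\;\frac{\cal F(H-0_U)}{\cal F(H)}.
\]
Since $V$ itself appears as a Zariski neighborhood of $0_U$ in the cofiltered system, the same argument (applied with $V$ in place of $\affl_U$) gives $\cal F(V-0_U)/\cal F(V)\simeq\cal F(H-0_U)/\cal F(H)$; composing the two isomorphisms yields the claim.

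\textbf{Main obstacle.} Theorem~\ref{th:intr:EtEx} is stated for essentially smooth \emph{local} schemes, whereas $V$ and $\affl_U$ are not local. Rigorously bridging this gap is the technical heart of the proof: one must either establish a ``global'' version of \'etale excision for open immersions of smooth relative curves by a geometric argument, or exhibit, at each finite stage of the cofiltered system, a compatible GW-correspondence between $V_\alpha$ and $\widetilde V_\alpha$ with an explicit quadratic form on the finite projective module of the correspondence. It is precisely this control of orientation/quadratic-form data on finite correspondences that the introduction signals as the novel geometric input, and I would expect the argument to pass through constructing a finite relative curve in $(\affl_U - 0_U)\times_U(V - 0_U)$ (extending across $0_U$) carrying a distinguished quadratic form and witnessing the restriction map as an $\affl^1$-homotopy equivalence on the relevant quotients.
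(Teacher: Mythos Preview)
Your reduction to \'etale excision does not go through, and the circularity is already visible in Step~2. Theorem~\ref{th:intr:EtEx} (which is Theorem~\ref{th:et-ex} in the body) is a statement about the quotients $\cal F(X_z-Z_z)/\cal F(X_z)$ for the \emph{localisation at a point} $z$. If you feed it the open immersion $V\hookrightarrow\affl_U$ and localise at the closed point $(0,u)$, the two local schemes $V_{(0,u)}$ and $(\affl_U)_{(0,u)}$ coincide, so the conclusion is a tautology and says nothing about $\cal F(\affl_U-0_U)/\cal F(\affl_U)$ versus $\cal F(V-0_U)/\cal F(V)$. Your displayed isomorphism in Step~2 has the wrong left-hand side: \'etale excision would give you $\cal F((\affl_U)_{0_U}-0_U)/\cal F((\affl_U)_{0_U})$ there, not $\cal F(\affl_U-0_U)/\cal F(\affl_U)$. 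Replacing the latter by the former is exactly the Zariski excision statement you are trying to prove. The same problem recurs in Step~3: identifying $\cal F(V-0_U)/\cal F(V)$ with the Henselian quotient already presupposes excision for the shrinking $V\supset V'\supset\cdots$ of Zariski neighbourhoods of $0_U$. Your ``Main obstacle'' paragraph essentially concedes this.

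The paper does not attempt any such reduction. It proves Theorem~\ref{th:afflUzar-ex} directly and independently of \'etale excision, by constructing explicit GW-correspondences of pairs. Concretely (Propositions~\ref{prop:affUzar-ex:inj:GWCor} and~\ref{prop:affUzar-ex:sur:GWCor}), one works on $\prl_{\affl_U}$ (resp.\ $\prl_V$), fixes sections $\mu,\nu,\delta\in\Gamma(\prl_{\affl_U},\cal O(1))$ cutting out $\infty$, $0$, and the diagonal, and then chooses sections $s_0,g\in\Gamma(\cal O(l))$ for large $l$ with prescribed restrictions to the divisors $D=\prl_U\setminus V$, $Z=0\times\affl_U$, $T=\infty$, and $\Delta$. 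The interpolation $s=(1-t)s_0+t\,\delta g$ on $\prl_{\affl_U}\times\affl$ yields, via the quadratic-space construction of Proposition~\ref{prop:constrOrCurFQCor}, a GW-correspondence $\Theta$ giving an $\affl$-homotopy between $i\circ\Phi$ and $\langle u\rangle\cdot id$ (and similarly for $\Phi\circ i$). The control of supports needed for these to be morphisms in $GWCor^{pair}$ comes from the imposed vanishing/invertibility of $s_0,g$ on the various divisors, not from any Henselization or localisation step. This is precisely the ``explicit quadratic form on a finite relative curve'' that you gesture at in your final paragraph, but it is the entire argument, not a supplement to a failed reduction.
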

\begin{theorem}(theorem \ref{th:affZar-inj})\label{th:intr:ZarAffInj}
Let 
  $\cal F$ is be homotopy invariant sheave with GW-transfers over field $k$
  and $K$ be geometric extension $K/k$ (i.e. field of functions of some variety).
Then 
  for any Zariski open subschemes $U\subset V\subset\affl_K$
  restriction homomorphism   
    $$i^*\colon {\cal F(V)}\to {\cal F(U)}$$
  is an injective, 
  where $i\colon U\hookrightarrow V$ denotes open immersion.  
\end{theorem}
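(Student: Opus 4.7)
The plan is to reduce the statement, via Nisnevich localisation and the excision theorems \ref{th:intr:EtEx} and \ref{th:intr:ZarEx}, to injectivity of $\mathcal{F}(\affl_L) \to \mathcal{F}(\affl_L - 0_L)$ where $L$ is the residue field of a closed point of $\affl_K$, and then resolve this universal case using homotopy invariance together with the unit section of $\affl_L$.

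First, I would reduce to the case where $V \setminus U$ is a single closed point. Given $a \in \mathcal{F}(V)$ with $i^*(a) = 0$ and $V \setminus U = \{z_1, \ldots, z_n\}$, one filters by $V_i := V \setminus \{z_1, \ldots, z_i\}$, so that $V_0 = V$, $V_n = U$, and $i^*$ factors through the chain $\mathcal{F}(V_0) \to \cdots \to \mathcal{F}(V_n)$. Injectivity of $i^*$ then follows from injectivity of each single-point removal $\mathcal{F}(V_{i-1}) \to \mathcal{F}(V_{i-1} \setminus z_i)$, so we are reduced to the case $U = V \setminus \{z\}$ for a single closed point $z \in V$.

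Next, I would localise at $z$. Let $L$ be the residue field of $z$, which is a finite separable extension of $K$ since $k$ is perfect. Because $\mathcal{F}$ is a Nisnevich sheaf, the kernel of $\mathcal{F}(V) \to \mathcal{F}(V \setminus z)$ is identified with the kernel of $\mathcal{F}(V^h_z) \to \mathcal{F}(V^h_z \setminus z)$, where $V^h_z$ denotes the Nisnevich (henselian) localisation of $V$ at $z$ (apply Nisnevich excision to the elementary square $(V, V \setminus z, V^h_z, V^h_z \setminus z)$). Hensel lifting a primitive element of $L/K$ inside $\mathcal{O}^h_{V,z}$ embeds $L$ as a subfield and identifies $V^h_z$, as a $k$-scheme, with $(\affl_L)^h_{0_L}$, the Nisnevich localisation of $\affl_L$ at its origin. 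A final application of the Nisnevich sheaf property reduces the problem to showing that $\mathcal{F}(\affl_L) \to \mathcal{F}(\affl_L - 0_L)$ is injective.

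The universal case then follows from homotopy invariance alone. Let $p : \affl_L \to \Spec L$ be the projection and let $u : \Spec L \to \affl_L$ denote the unit section, which factors as $\Spec L \xrightarrow{u_m} \affl_L - 0_L \xrightarrow{j} \affl_L$. Since $p \circ u = \mathrm{id}$ and $p^*$ is an isomorphism by homotopy invariance, $u^* = u_m^* \circ j^*$ must equal $(p^*)^{-1}$. Hence the composition
\[ \mathcal{F}(\affl_L) \xrightarrow{j^*} \mathcal{F}(\affl_L - 0_L) \xrightarrow{u_m^*} \mathcal{F}(\Spec L) \xrightarrow{p^*} \mathcal{F}(\affl_L) \]
is the identity on $\mathcal{F}(\affl_L)$, so $j^*$ is split injective. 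The main obstacle is the middle step, where one must make the identification of $V^h_z$ with the universal model $(\affl_L)^h_{0_L}$ compatible with the GW-transfer structure; the cleanest route is to invoke the étale excision Theorem \ref{th:intr:EtEx} on an étale morphism between the two henselisations, so that transfer-compatibility is automatic on cokernels, with the upgrade to kernels coming from the Nisnevich sheaf condition.
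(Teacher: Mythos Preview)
Your approach is genuinely different from the paper's. The paper does not localise or invoke any excision results; instead, Proposition~\ref{prop:affZar-inj:GWCor} constructs an explicit GW-correspondence $\Phi\in GWCor(V,U)$ together with an $\affl$-homotopy witnessing $i\circ\Phi\simA id_V$, by writing down sections of $\cal O(l)$ on $\prl_V$ and applying the curve-to-quadratic-space construction of Proposition~\ref{prop:constrOrCurFQCor}. That argument works for any homotopy invariant \emph{presheaf} with GW-transfers, and indeed the body of the paper (Theorem~\ref{th:affZar-inj}) states it for presheaves---the word ``sheave'' in the introductory statement is a slip. The presheaf version is what is actually used in the proof of Theorem~\ref{th:affcoh}, where injectivity of $\cal F(U)\to\cal F(\eta)$ is applied to $\cal F$ itself, \emph{before} sheafification. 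Your argument uses the Nisnevich sheaf condition in an essential way (to identify kernels along the elementary square), so it proves a strictly weaker statement that would not suffice for that application.

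There is also a gap in your localisation step. You write ``Hensel lifting a primitive element of $L/K$'', but $K$ need not be perfect even when $k$ is, so a closed point $z\in\affl_K$ can have $L/K$ purely inseparable (e.g.\ $z=Z(t^p-a)$ with $a\notin K^p$), and then neither the primitive-element argument nor the \'etale map $\affl_L\to\affl_K$ you would need for Theorem~\ref{th:intr:EtEx} is available. The identification $V^h_z\simeq(\affl_L)^h_{0_L}$ can still be salvaged over perfect $k$ by lifting $L$ as a $k$-algebra (using that $L/k$ is separable) and then choosing a uniformiser, but this is a different argument from the one you sketched and does not pass through the \'etale excision theorem in the way you suggest.
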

\begin{theorem}[see \cite{ChepInjLocHIiWtrPreSh}  for the case of Witt-correspondences, and theorem \ref{th:locZar-inj} for the case of GW-correspondences]\label{th:intr:InjTh}
For any essential smooth local scheme $U$,
and closed subscheme $Z\subset U$,
restriction homomorphism
    $$i^*\colon {\cal F(V)}\to {\cal F(U)}$$
is injective.
\end{theorem}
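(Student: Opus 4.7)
The plan is to follow Voevodsky's strategy for $\mathrm{Cor}$-transfers, adapted to the Grothendieck--Witt setting: given $a \in \mathcal{F}(U)$ with $a|_{U \setminus Z} = 0$, construct a GW-correspondence $\Phi \in \mathrm{GWCor}(U \times \affl^1,\, U)$ which is $\affl^1$-homotopic to the identity on $U$ and which at $t = 1$ factors through the open complement $U \setminus Z$, and then invoke homotopy invariance to conclude $a = \Phi_0^*(a) = \Phi_1^*(a) = 0$.

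By standard continuity/limit arguments I reduce to $U = \Spec \mathcal{O}_{X,x}$ for $X/k$ smooth and $x \in X$, and by Noetherian induction on $\dim Z$ (using the long exact sequences afforded by the excision theorems \ref{th:intr:EtEx} and \ref{th:intr:ZarEx}) I may assume $Z$ is pure of codimension one. So the problem becomes: produce a GW-correspondence $\Phi$ as above with the prescribed boundary behaviour at $t=0$ and $t=1$.

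To construct $\Phi$, I would employ a Voevodsky-style standard triple $(\overline{\mathcal{C}} \to U,\, \mathcal{C}_\infty,\, f)$: a smooth projective relative curve over $U$, a relative divisor at infinity $\mathcal{C}_\infty$, and a function $f$ that is finite outside $\mathcal{C}_\infty \cup \mathcal{C}_Z$ and separates $Z$ from the infinity divisor. The underlying finite cycle defines a classical finite correspondence; the new input for the GW-setting is to equip the associated locally free $\mathcal{O}_U$-module with a canonical symmetric bilinear form --- essentially the trace form of the finite map --- extended coherently across the $\affl^1$-parameter. Verification that $\Phi|_{t=0} = \mathrm{id}_U$ and that $\Phi|_{t=1}$ lands in $U \setminus Z$ can then be reduced, via the already-established étale excision (Theorem \ref{th:intr:EtEx}) and Zariski excision on the relative affine line (Theorem \ref{th:intr:ZarEx}), to a statement on $\affl_K$ for $K$ a suitable geometric extension, where the injectivity of Theorem \ref{th:intr:ZarAffInj} pins down the relevant classes.

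The main obstacle is the GW-enhancement of the standard-triple construction: one must exhibit a globally defined quadratic (or symmetric bilinear) structure on the relative cycle whose restrictions at $t=0$ and $t=1$ yield the identity and a correspondence factoring through $U \setminus Z$, as well-defined elements of $\mathrm{GWCor}$ rather than merely of the underlying cycle group. This is precisely the novelty flagged in the introduction --- explicit geometric constructions that allow one to control orientations on correspondences between étale coverings of $\affl_U$ and thereby produce the desired $\affl^1$-decomposition of unit in $\mathrm{GWCor}_k$.
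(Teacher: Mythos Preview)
Your high-level plan --- construct a GW-correspondence $\Phi$ from the local scheme to the open complement such that composing with the inclusion is $\affl$-homotopic to the canonical map --- is exactly the shape of the paper's argument (Proposition~\ref{prop:locZar-inj:GWCor}). But several of the specific steps you propose diverge from the paper and contain genuine gaps.

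\textbf{The detours through excision are unnecessary and unclear.} The paper's proof is self-contained: it does \emph{not} invoke Theorems~\ref{th:intr:EtEx}, \ref{th:intr:ZarEx}, or \ref{th:intr:ZarAffInj} at all. There is no reduction to the codimension-one case --- arbitrary closed $Z$ is handled directly. Moreover, it is not clear how the excision theorems (which are stated only for $Z$ of codimension one) would effect the Noetherian reduction you claim; you would need to explain this carefully, and the paper shows it is not needed.

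\textbf{The quadratic form is not a trace form.} This is the main technical gap. You write ``essentially the trace form of the finite map''; but the trace form of a finite flat extension is non-degenerate only when the map is \'etale, so it does not in general give a class in $GWCor$. The paper instead uses Proposition~\ref{prop:constrOrCurFQCor}: given a smooth relative curve $\mathcal C/U$ with a trivialisation $\mu$ of the relative canonical sheaf (an \emph{orientation}) and a relatively finite function $f$, Grothendieck--Serre duality (via Ojanguren--Panin) furnishes a canonical non-degenerate symmetric form on $k[Z(f)]$ over $k[U]$. This orientation input is precisely the ``control of orientation'' the introduction flags as the novelty.

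\textbf{What the paper actually does.} The construction in Proposition~\ref{prop:locZar-inj:GWCor} runs the Quillen trick (a generic linear projection $\aff^d\to\aff^{d-1}$) to present $X_z$ as the base of a smooth relative curve $\mathcal X$ with a good projective compactification $\overline{\mathcal X}$ and ample $\mathcal O(1)$. One then chooses, by Serre vanishing, sections $g$ and $s_0$ of $\mathcal O(l)$ with prescribed zeros/values along $D=\overline{\mathcal X}\setminus\mathcal X$, the diagonal $\Delta$, and $\mathcal Z$, and sets $s=s_0(1-t)+\delta g\,t$. Applying the orientation construction to $s_0/d^l$ and $s/d^l$ yields quadratic spaces $Q_0,Q,Q_1$; a subtraction of the constant piece $Q_{Z(g)}$ arranges that $\Theta\circ i_1=[(k[\Delta],u)]$, and one twists by $\langle u^{-1}\rangle$ to get the identity on the nose. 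No passage to $\affl_K$ is involved.
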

To prove the excision theorem above 
we give an explicit construction of GW-correspondences in the category of pairs
$(V,V-Z)\to(V^\prime, V^\prime-Z)$, that are inverse up to $\affl$-homotopy to
embedding morphisms
$(V,V-Z)\to(V^\prime, V^\prime-Z)$
where $Z$ is considered closed subscheme and $V$ is corresponding neighbourhood of $Z$ (Nisnevich or Zarisky).
To prove the injectivity theorems we give the construction of a left inverse in the category of GW-(Witt)-correspondences between pairs to the mentioned embeddings of open subschemes.



\subsection{Overview of the text:}
In the section \ref{sect:GWW-cor} we present the definitons of the categories of GW-(Witt-)correspondences, prove the basic elementary properties and give the construction that produce GW-(Witt-)correspondences from a relative curve with trivialisation of the relative canonical class and a (good) regular function of the curve.

In the sections \ref{sect:QuiltrCompact} and \ref{sect:FunctionsConstr} we summarise geometric constructions used in the construction of GW-correspondences in the proofs of the excision and injectivity theorems.

In the section \ref{sect:ZarEx}
the Zariski excision isomorphism (theorem \ref{th:intr:ZarEx}) on the relative affine line over a local base is proved.
In the section \ref{sect:EtEx}
the etale excision isomorphism (theorem \ref{th:intr:EtEx}) on the relative affine line over a local base is proved.
In the section \ref{sect:Injth}
the injectivity theorems on a local essential smooth scheme (theorem \ref{th:intr:InjTh}) and on the affine line (theorem \ref{th:intr:ZarAffInj}) are proved.

The section we give \ref{sect:StrHomInv} the main result of the article (theorem \ref{th:intr:StrHomInv}). 

\subsection{Acknowledgements}
Acknowledgement to I.~Panin who encouraged me to work on this project, for helpful discussions.

\subsection{Notation}
All schemes are a separated noetherian schemes of finite type over the base, 
and $Sm_k$ denotes the category of smooth schemes over field $k$.
We denote $Coh(X)=Coh_X$ the category of coherent sheaves on a scheme $X$,
for any scheme $X$ (not only affine) to shortify denotations. 
For any $P\in Coh(X)$ we denote by $\Supp P$ the closed subscheme in $X$ defined by the sheaf of ideals $\mathcal I(U)=\Ann P\big|_{U}\subset k[U]$ and we denote by $\Supp_{red} P$ the reduced subscheme of $\Supp P$.

We write $k[X]$ for the ring of regular (global defined) functions $X\to \affl$, i.e. $k[X] = \Gamma(X,\mathcal O(X))$. 
We denote by $Z(f)$ vanish locus of $f$, for any regular function $f$ on scheme $X$, 
and by $Z_{red}(f)$ its reduced subscheme. 
Similarly,
for a section $s\in \Gamma(X,\cal L)$ of some line bundle on scheme $X$ 
we denote by $Z(s)$ the closed subscheme defined by the ideal sheaf $\{f\colon div\,f\geq div\,s\}$ 
(which is equivalent to the image of homomorphism $\cal L_{-1}\xrightarrow{s} \cal O(X)$).
For an effective divisor $D$ in variety $X$ denote by $S(D)$ the closed subscheme $Z(s)$, 
where $s\in \Gamma(X,\cal L(D)),\,div\,s=D.$


\section{GW-correspondences}\label{sect:GWW-cor}

\subsection{categories with duality $(\mathcal P(Y\to X),D_X)$.}

\begin{definition}
For a morphism of schemes $p\colon Y\to X$,
let $Coh_{fin}(p)$ (or $Coh_{fin}(Y_X)$) denotes
the full subcategory of the category of coherent sheaves on $S$ 
  spanned by sheaves $\cal F$ such that $\Supp \cal F$ is finite over $X$;
and let  
$\mathcal P(p)$ (or $\mathcal P(Y_X)$) 
denotes the full subcategory of $Coh_{fin}(Y)$ 
  spanned by sheaves $\cal F$ such that $p_*(\cal F)$ is locally free sheave on $X$. 

For two schemes 
$X$ and $Y$ over a base scheme $S$
we denote $$Coh_{fin}^S(X,Y) = Coh_{fin}(X\times_S Y\to X),\;\mathcal P^S(X,Y) = \mathcal P(X\times_S Y\to X).$$ 
\end{definition}
\begin{remark}
In the case of affine schemes $Y$, $X$,
$\cal P(Y\to X)$ is equivalent to the full subcategory in the 
category of $k[Y]$-modules consisting of modules that are finitely generated and projective over $k[X]$.
\end{remark}

The internal hom-functor $D_X=\mathcal Hom( -,\mathcal O(X) )$ on the category of coherent sheaves on $X$ can be naturally lifted to a functor $$D_X\colon Coh_{fin}(Y_X)^{op}\to Coh_{fin}(Y_X)^{op},$$ for any morphism of schemes $Y\to X$.

Indeed,
firstly let's note that we can define the required functor $D_X$ locally along $X$,
i.e. it is enough to define the functor $D_X$ in a natural way for affine schemes $X$.
Next let's note that if $X$ is affine and $Y\to X$ is finite morphism,
then $Coh_{fin}(Y\to X)\simeq Coh(Y)\simeq k[Y]-mod$, 
and the required functor $D_X$ is equivalent to the functor $Hom(-,k[X])\colon k[Y]-mod\to k[Y]-mod$. 
In general case of a morphism $Y\to X$
we have $$Coh_{fin}(Y\to X)=\varinjlim\limits_{Z} Coh(Z\to X),$$ where $Z$ ranges over the set of closed subschemes in $Y$ finite over $X$;
and hence the functor $D_X$ on $Coh_{fin}(Y\to X)$ can be defined as a direct limit of functors $D_X$ defined on $Coh_{fin}(Z\to X)=Coh(Z)$ that are just defined by the above.

Next let's note that since the functor $Coh_{fin}(Y\to X)\to Coh(X)$ is conservative
and $D_X$ on $Coh(X)$ defines the duality on the subcategory category $\mathcal P(X)$ spanned by locally free coherent sheaves,
it follows that the functor $D_X$ on $Coh_{fin}(X)$ defines the duality on the category $\mathcal P(Y\to X)$.
In addition
for any morphisms of schemes $X_3\to X_2\to X_1$ 
the tensor product of modules (coherent sheaves)
defines a functor of categories with duality
\begin{equation}\label{eq:comp}
-\circ -\colon  (\mathcal P^{X_3}_{X_2},D_{X_2}) \times (\mathcal P^{X_2}_{X_1},D_{X_1}) \to (\mathcal P^{X_3}_{X_1},D_{X_1}), 
\end{equation} 
which is natural along $X_3$, $X_2$, $X_1$ and 
satisfies the associativity in that sense that for any three morphisms $X_4\to X_3\to X_2\to X_1$
and $P_3\in \mathcal P(X_4,X_3)$, $P_2\in\mathcal P(X_3,X_2)$, $P_2\in\mathcal P(X_2,X_1)$,
there is a natural isomorphism $\xi\colon P_3\circ (P_2\circ P_1)\simeq (P_3\circ P_2)\circ P_1$, 
such that
$(\xi_{1,2,3}\circ -)\circ (\xi_{1,23,4})\circ (- \circ \xi_{2,3,4}) = \xi_{12,3,4}\circ \xi_{1,2,34}$,
and
$(\eta_{1,2}\circ D(-)) \circ \eta_{12,3}\circ D(\xi) = D(\xi)^{-1}\circ (D(-)\circ\eta_{2,3}) \circ \eta_{1,23}$,
where $\eta_{i,j}\colon D_{X_i}(-\circ -)\simeq D_{X_j}(-)\circ D_{X_i}(-)$ ($i,j=1,\dots, 4$) denotes the structure morphism of the functor of categories with duality.

\begin{remark}
%
The functor $D_X$ is represented by $p^!(\cal O(X))$, i.e.
$D_X(\cal F) = \cal Hom(\cal F,p^!(\cal O(X))$
where $\mathcal Hom$ denotes internal homomorphism functor in $Coh(Y)$.

\end{remark}

\subsection{Categories $GWCor$, $WCor$.}

\begin{definition}\label{def:GWCor}
The \emph{category $QCor_S$} is the category  
with objects being smooth schemes over $S$, 
morphism groups being defined as 
$$QCor_S(X,Y) = Q(\mathcal P^S(X,Y),D_X)$$
where the symbol $Q$ denotes the set of isomorphism classes of quadratic spaces in the category with duality 
the composition is induced by the functor \eqref{eq:comp},
and identity morphism $$Id_X=[(\mathcal O(\Delta),1)],$$ 
where $\Delta$ denotes diagonal in $X\times_S X$.

%

The \emph{categories $GWCor_S$ and $WCor_S$} are 
the additive categories with the same objects and 
such that 
$$GWCor(X,Y)= GW(\mathcal P^S(X,Y),D_X), 
WCor(X,Y)= W(\mathcal P^S(X,Y),D_X),$$
where $GW$ denotes the Grothendieck-Witt-group of the exact category with duality,
i.e. the group completion of the groupoid (up to direct sums) of non-degenerate quadratic spaces $(P,q)\colon$ $P\in \mathcal P^S(X,Y)$, $q\colon P\simeq D_X(P)$,
and $W$ are Witt group of the exact category with duality (see Balmer \cite{Bal_DerWitt}). 


%
\end{definition}  
\begin{remark}
Equivalently to the definition above we can say that 
the \emph{category $GWCor_S$} is the additivisation of the category $QCor_S$,
and the \emph{category $WCor_S$} is the factor-category of $GWCor_S$ such that classes of metabolic spaces defines the zero morphism. 
\end{remark}
\begin{definition}\label{def:funSmGWW}
Let's define a functor $Sm_S\to GWCor_S$,
$$f\in Mor_{Sm_k}(X,Y) \mapsto [(\cal O(\Gamma_f), 1)],$$
where $\Gamma_f$ denotes graph of morphism $f$, that is closed subscheme in $Y\times X$ isomorphic to $X$,
and $1$ denotes unit quadratic form on a free coherent sheave of a rank one. 
The composition with factorisation $GWCor_S\to WCor_S$ gives us the functor $Sm_S\to WCor_S$.
\end{definition}
\begin{remark}
For any $\Phi\in GWCor$ and regular maps $f\colon X^\prime \to X$ and $g\colon Y\to Y^\prime$,
$$\Phi\circ f = (id_Y\times f)^*(\Phi),\;g\circ \Phi = (g\times id_X)_*(\Phi),$$
where $(id_Y\times f)^*$ denotes inverse image along morphism $id_Y\times f\colon Y\times X^\prime\to Y\times X$,
and $(g\times id_X)_*$ denotes direct image along morphism $g\times id_X\colon Y\times X^\prime\to Y\times X$.
\end{remark}  
The following definitions and lemmas can be given in the same manner for $GW$-correspondences and $Witt$-correspondences.
 
\begin{definition}
A \emph{presheave on $Sm_S$} is an additive functor $F\colon Sm_S\to Ab$;     
a \emph{presheave with GW-transfers} over a base $S$ is an additive functor $F\colon GWCor_S\to Ab$.     

A presheave $\mathcal F$ on $Sm_S$ is called \emph{homotopy invariant} is the natural homomorphism $\cal F(X)\simeq \cal F(\affl\times X)$ is an isomorphism for any $X\in Sm_S$;
a presheave $\mathcal F$ with GW-transfers is \emph{homotopy invariant} if it is homotopy invariant as a presheave on $Sm_S$ via the functor from definition \ref{def:funSmGWW}.     
\end{definition}

\begin{lemma}\label{lm:GWCor:locbase.}
Suppose $S$ is a scheme and $s\in S$ is a point;
then there is an embedding functor $GWCor_{S_s}\to \text{\rm pro-}GWCor_S$ (and $WCor_{S_s}\to \text{\rm{pro-}}WCor_S$).
So consequently 
any presheave with GW-transfers over $S$ defines in canonical way a presheave with GW-transfers over $S_s$.
\end{lemma}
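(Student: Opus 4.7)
The plan is to realise $S_s$ as the cofiltered limit of its Zariski-open neighbourhoods in $S$ and then to apply standard finite-presentation (spreading-out) arguments to lift objects, morphisms and quadratic data from $S_s$ to some finite stage. Fix a cofiltered system $\{S_\alpha\}$ of open neighbourhoods of $s$ with $S_s = \lim_\alpha S_\alpha$. For any $X\in Sm_{S_s}$ of finite type, the limit theorems of EGA~IV produce an index $\alpha$ and an essentially unique smooth $S_\alpha$-scheme $X_\alpha$ with $X\simeq X_\alpha\times_{S_\alpha} S_s$; applied to pairs $(X,Y)$ this gives compatible models over each sufficiently small $S_\alpha$.

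The key technical step is to identify the morphism groups. I would show that
$$\mathcal{P}^{S_s}(X,Y) \;=\; \colim_\alpha \mathcal{P}^{S_\alpha}(X_\alpha, Y_\alpha),$$
meaning that every $P$ descends to some finite stage, isomorphisms descend after further refinement, and the conditions ``$\Supp P$ finite over the first factor'' and ``$p_*P$ locally free of finite rank'' are open conditions that spread out. The duality functor $D_X$ commutes with base change (since $\mathcal{O}$ pulls back to $\mathcal{O}$ and the support-finiteness reduces $D_X$ to a finite $\Hom$ computation), hence any quadratic form $q\colon P\xrightarrow{\sim} D_X(P)$ also spreads out, and isomorphisms of quadratic forms (and metabolic decompositions, needed for $WCor$) become equalities after further refinement. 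Together with the additivity/group-completion construction this yields
$$GWCor_{S_s}(X,Y) \;=\; \colim_\alpha GWCor_{S_\alpha}(X_\alpha, Y_\alpha), \qquad WCor_{S_s}(X,Y) \;=\; \colim_\alpha WCor_{S_\alpha}(X_\alpha, Y_\alpha).$$

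These colimit descriptions are exactly the formula for Hom-sets in $\text{\rm pro-}GWCor_S$, so the assignments $X\mapsto \{X_\alpha\}$ and $\Phi\mapsto \{\Phi_\alpha\}$ assemble into the claimed embedding $GWCor_{S_s} \to \text{\rm pro-}GWCor_S$; functoriality with respect to composition follows from the compatibility of the composition functor \eqref{eq:comp} with base change along $S_\alpha\to S_s$, together with the uniqueness of models up to refinement. The consequence for presheaves is then immediate: given $\mathcal{F}$ with GW-transfers over $S$, the formula $\mathcal{F}_s(X) := \colim_\alpha \mathcal{F}(X_\alpha)$ defines, independently of the choice of models, a presheaf with GW-transfers over $S_s$. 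The main obstacle, and where I would spend most care, is the simultaneous control in the second paragraph of all three ingredients of the definition---support-finiteness, locally-free pushforward, and the duality isomorphism---and the verification that stable (and metabolic) classes transport correctly through the filtered colimit; once those finite-presentation lemmas are in place, the rest is bookkeeping.
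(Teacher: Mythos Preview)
Your proposal is correct and follows essentially the same approach as the paper: the paper's own proof is only a two-line sketch, noting that (1) every finite-type $S_s$-scheme is a projective limit of schemes over Zariski neighbourhoods of $s$, and (2) a quadratic space, an isomorphism of quadratic spaces, and the metabolic property are all encoded by a finite set of data and hence spread out. Your write-up is exactly the spreading-out argument the paper is gesturing at, with the EGA~IV limit theorems and the colimit identification of the $GWCor$-groups made explicit.
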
\begin{proof}
We omit the full prove to shortify text. 
Let's note only that 
the claim follows form the following points: 
1) any scheme of finite type over $S_s$ is porjective limit of schemes over Zariski neighbourhoods $s\in U_i\in S$;
2) any quadratic space can be defined by finite the set of data (regular functions and equations); 
the isomorphism of quadratic spaces and the property of a quadratic space to be metabolic can be defined by finite set of data (regular functions and equalities).
Note that for this statement it is essential that schemes considered in definition \ref{def:GWCor} 
are schemes of finite type over the base. 
\end{proof}
\begin{definition}
An algebra $R/k$ is called \emph{geometric extension} of the base filed $k$ if
$R$ is isomorphic to the local ring $k[X_x]$ for some smooth variety $X$ over $k$ and point $x\in X$.
\end{definition}
\begin{corollary}\label{cor:GWCor:getombase}
A presheave with GW-transfers over $k$ defines in canonical way a presheave with GW-transfers over $R$
for any geometric extension $R/k$. 
\end{corollary}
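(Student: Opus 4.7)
The plan is to deduce Corollary \ref{cor:GWCor:getombase} from Lemma \ref{lm:GWCor:locbase.} by sandwiching a forgetful functor $GWCor_X \to GWCor_k$ in between. By definition $R = k[X_x]$ for some smooth $k$-variety $X$ and some point $x \in X$, so $\Spec R = X_x$ and the lemma already gives us the second half of the reduction.

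\textbf{Step 1: a forgetful functor.} For any smooth variety $X/k$, I would construct a functor $\Phi_X : GWCor_X \to GWCor_k$ that on objects sends a smooth $X$-scheme $Y$ to itself viewed as a smooth $k$-scheme (possible since $X/k$ is smooth). On morphism groups the key observation is that for $Y_1, Y_2 \in Sm_X$ the diagonal map makes $Y_1 \times_X Y_2$ a closed subscheme of $Y_1 \times_k Y_2$, and pushforward along this closed immersion carries $\mathcal{P}^X(Y_1, Y_2)$ into $\mathcal{P}^k(Y_1, Y_2)$: the support stays finite over $Y_1$ and the direct image to $Y_1$ is unchanged, hence remains locally free of finite rank. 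Since the duality $D_{Y_1}$ only sees this direct image on $Y_1$, the pushforward is a strict morphism of categories with duality, and compatibility with the composition \eqref{eq:comp} reduces to the corresponding identity for tensor products of modules. Consequently $\Phi_X$ descends to a well-defined additive functor on Grothendieck-Witt groups.

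\textbf{Step 2: apply the local-base lemma.} Given a presheaf $\cal F$ with GW-transfers over $k$ (an additive functor $GWCor_k \to Ab$), form $\cal F_X := \cal F \circ \Phi_X : GWCor_X \to Ab$, which is a presheaf with GW-transfers over $X$. Now invoke Lemma \ref{lm:GWCor:locbase.} with $S = X$ and $s = x$: it produces a presheaf with GW-transfers $\cal F_R$ on $Sm_R = Sm_{X_x}$, given on $Y \in Sm_R$ essentially by the colimit of $\cal F_X$ over models of $Y$ defined on shrinking Zariski neighborhoods of $x$ in $X$.

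\textbf{Main obstacle.} The one point that is not purely formal is the canonicity claim: the same $k$-algebra $R$ admits many presentations $R \cong k[X_x] \cong k[X'_{x'}]$, and the resulting presheaf over $R$ must be independent of this choice. Any two such presentations agree after replacing $X$ and $X'$ by common Zariski neighborhoods, so the associated pro-systems of $k$-smooth neighborhoods of $x$ are cofinal in one another; the colimits defining $\cal F_R$ therefore agree. This is the same cofinality observation that underlies Lemma \ref{lm:GWCor:locbase.}, so no new geometric input is required beyond the functoriality check for $\Phi_X$.
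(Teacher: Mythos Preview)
Your approach is correct and is essentially what the paper intends: the corollary is stated without proof, as an immediate consequence of Lemma~\ref{lm:GWCor:locbase.}, and your two-step argument (forgetful functor $GWCor_X\to GWCor_k$ followed by the lemma with $S=X$, $s=x$) is the natural way to unpack this. The paper does not spell out the forgetful functor of Step~1, but it is implicit in the way the lemma is phrased---schemes of finite type over $S_s$ are limits of schemes over Zariski neighbourhoods of $s$, and when $S=X$ is smooth over $k$ those neighbourhood-level schemes are already smooth over $k$, which is exactly the content of your $\Phi_X$.

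One small comment: rather than factoring through $GWCor_X$ you could equally well argue that the same spreading-out reasoning in the proof of Lemma~\ref{lm:GWCor:locbase.} directly produces an embedding $GWCor_R\hookrightarrow\text{pro-}GWCor_k$, since a smooth $R$-scheme is a pro-object of $Sm_k$ and a quadratic space on $Y_1\times_R Y_2$ spreads out to one on $Y_{1,i}\times_{U_i} Y_{2,i}\hookrightarrow Y_{1,i}\times_k Y_{2,i}$ for some neighbourhood $U_i$. This collapses your two steps into one and is perhaps closer to what the author had in mind by calling it a corollary, but the content is identical to what you wrote.
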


\begin{definition}\label{GWCorpair}
We define additive category of GW-correspondences between pairs $GWCor^{pair}_S$ over base $S$, as follows: 
objects of $GWCor^{pair}_S$
are pairs $(X,U)$ of smooth scheme $X$ over $S$ and open subscheme $U\subset X$,
and 
the group of morphisms
\begin{multline*}GWCor^{pair}((X,U), (Y,V)) = \\H[
GWCor(X,V) \xrightarrow{d_0} GWCor(U,V)\oplus GWCor(X,Y)\xrightarrow{d_1} GWCor(U,Y)
],\end{multline*}
where 
$d_0 = (-\circ i,j\circ -)$, $d_1=(j\circ -,-\circ i)$,
$i\colon U\hookrightarrow X$, $j\colon V\hookrightarrow Y$,
and $H$ denotes cohomology in the middle term, i.e. $\Ker(d_1)/\Image(d_0)$.

Equivalent $GWCor^{pair}_S$ can be defined as the factor category in the full subcategory of the category of arrows in $GWCor_S$
spanned by open embeddings and factorised by the ideal consisting of morphisms $(\Phi,\widetilde\Phi)\colon (X,U)\to (Y,V)$ such that there is a lift $\Theta \colon X\to V$ in $GWCor_S$
$$\xymatrix{
X\ar[r]^{\Phi} & Y\\
U\ar@{^(->}[u]\ar[r]^{\widetilde{\Phi}}\ar@{-->}[ru] & V\ar@{^(->}[u]
.}$$ 
\end{definition}

\begin{remark}\label{rm:GWCorFpair}
For a homotopy invariant presheave with GW-transfers $\cal F$
the formula 
$$\begin{array}{ccl}
GWCor^{pair}&\longrightarrow& Ab\\
(Y,U)&\mapsto& \Coker(\cal F(Y)\to \cal F(U))
\end{array}$$
defines a homotopy invariant presheave on the category $GWCor^{pair}$.

\end{remark}

\begin{lemma}\label{lm:CorIm}
If $Q=(P,q)\in Q(\cal P(X,Y)$ is a quadratic space,
and $j\colon Y^\prime\hookrightarrow Y$ is an open immersion such that
$\Supp P=Y^\prime\times_Y \Supp P$,
then there is a unique quadratic space $Q^\prime\in Q(\cal P(X,Y^\prime))$,
such that $j\circ Q^\prime = Q$.
\end{lemma}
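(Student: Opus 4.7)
The plan is to construct $Q'$ by restricting $Q$ along the open immersion $\iota := j\times \id_X\colon Y'\times_S X \hookrightarrow Y\times_S X$ on products, and then to verify that applying $\iota_*$ recovers $Q$. Concretely, I would set $P' := \iota^*P$ and $q' := \iota^*(q)$.

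First I would show that the adjunction unit $P\to \iota_*\iota^*P = \iota_*P'$ is an isomorphism and that $P'\in \mathcal P^S(X,Y')$. The hypothesis $\Supp P = Y'\times_Y \Supp P$ says precisely that the closed set $\Supp P\subset Y\times_S X$ is contained in the open $Y'\times_S X$. Choosing $n$ with $\mc{I}_{\Supp P}^n\cdot P = 0$ realizes $P$ as the direct image of a coherent sheaf on a closed subscheme $Z_n\subset Y'\times_S X$, so $P\cong \iota_*P'$ canonically. The pushforward $(p_{Y'})_*P' = (p_Y)_*P$ is then locally free on $X$ by the hypothesis on $P$, and $\Supp P'$ is still finite over $X$; so $P'\in \mathcal P^S(X,Y')$.

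Next I would transfer the form using $\iota^! = \iota^*$ for the open immersion $\iota$. From $D_X = \mathcal Hom(-,p^!\mathcal O_X)$ this gives a natural isomorphism $\iota^*\circ D_X \cong D_X\circ \iota^*$, so $q' = \iota^*(q)\colon P'\xrightarrow{\sim}D_X(P')$ is an isomorphism. Since $\Supp D_X(P)\subseteq \Supp P\subseteq Y'\times_S X$, the same support argument gives $D_X(P)\cong \iota_*D_X(P')$ canonically. Applying $\iota_*$ to $q'$ and chasing the naturality square for the unit $\mathrm{id}\to \iota_*\iota^*$ applied to the morphism $q$ shows that $\iota_*(q')$ becomes $q$ under the identifications $\iota_*P'\cong P$ and $\iota_*D_X(P')\cong D_X(P)$; hence $j\circ Q' = \iota_*(P',q') = Q$. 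For uniqueness, any $(\tilde P',\tilde q')$ with $\iota_*(\tilde P',\tilde q') = (P,q)$ satisfies $\tilde P'\cong \iota^*\iota_*\tilde P' = \iota^*P = P'$ (using $\iota^*\iota_*\cong \id$ for the open immersion), and likewise $\tilde q'$ corresponds to $\iota^*q = q'$.

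The hard part will be the duality bookkeeping in the second step: one must check that the canonical isomorphism $\iota^*D_X\cong D_X\iota^*$ intertwines $q$ and $q'$ after applying $\iota_*$. This is formal once one unwinds $D_X$ via $\mathcal Hom(-,p^!\mathcal O_X)$ and uses $\iota^!=\iota^*$, but it is the only point where one cannot afford to wave through the identifications.
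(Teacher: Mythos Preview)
Your proposal is correct and follows essentially the same route as the paper: set $(P',q')=\iota^*(P,q)$ with $\iota=j\times\id_X$, use the support hypothesis to get $\iota_*\iota^*P\cong P$ (hence $P'\in\mathcal P^S(X,Y')$ and $j\circ Q'=Q$), and derive uniqueness from faithfulness of $\iota_*$ (equivalently $\iota^*\iota_*\cong\id$). Your treatment of the duality compatibility $\iota^*D_X\cong D_X\iota^*$ is in fact more explicit than the paper's, which simply asserts that ``$q$ defines the quadratic form on it.''
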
\begin{proof}
The uniqueness follows from that the functor $(j_X)_*$, where 
$j_X\colon j\times id_X\colon Y^\prime\times X\hookrightarrow Y\times X$, is faithful.
The existence follows from that 
if $\Supp P=Y^\prime\times_Y \Supp P$, then 
$P$ defines the element in $\cal P(X,Y^\prime)$, and $q$ defines the quadratic form on it.

More precisely
let $j_X= j\times id_X\colon Y^\prime\times X\hookrightarrow Y\times X$,
and $(P^\prime,q^\prime)=j_X^*(P,q)$,
where by definition 
$P^\prime \in Coh(Y^\prime\times X)$ and $q\colon P^\prime\to D_X(P^\prime)$.
Then $\Supp P^\prime=\Supp P$ is finite over $X$ and
since $(j_X)_* ( j_X^*( P ) )= P$,
we have
${pr^\prime }_*( P^\prime )=pr_*( (j_X)_* ( j_X^*( P ) ) = pr_*(P)\in cal P(X)$,
where $pr^\prime\colon Y^\prime\times X\to X$ and $pr\colon Y\times X\to X$.


\end{proof}
\begin{lemma}\label{lm:QPairCor}
Let 
$j^X\colon X^\prime\hookrightarrow X$,  $j^Y\colon Y^\prime\hookrightarrow Y$
be open immersions, 
$Z=Y\setminus Y^\prime$, 
and $Q=(P,q)\in Q(\cal P(X,Y))$ be a quadratic space
such that 
$$Z\times_Y \Supp P\times_X X^\prime=\emptyset;$$
then $Q$ defines 
in a canonical way an element in $\Phi\in GWCor( (X,X^\prime), (Y,Y^\prime) )$,
such that $\Phi=[([Q],[Q^\prime])]$
and $(j^Y)_* (Q^\prime) = (j^X)^*(Q)$.
We will denote such element simply by $[Q]\in GWCor( (X,X^\prime), (Y,Y^\prime) )$.


If moreover 
$$Z \times_{Y} \Supp P=\emptyset,$$
then
$\Phi=0\in GWCor( (X,X^\prime), (Y,Y^\prime) )$. 
\end{lemma}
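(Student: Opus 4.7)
The plan is to build $Q'$ by applying Lemma \ref{lm:CorIm} to the restriction of $Q$ along $j^X$, verify that the pair $([Q'],[Q])$ is a cocycle in the complex defining $GWCor^{\rm pair}$, and finally read off the vanishing statement from a second application of Lemma \ref{lm:CorIm}.

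First I would consider the pullback $(j^X)^*(Q) \in Q(\mathcal{P}(X',Y))$. Writing $j^X_Y = \mathrm{id}_Y\times j^X\colon Y\times X' \hookrightarrow Y\times X$, its support is $(j^X_Y)^{-1}(\Supp P) = \Supp P\times_X X'$. The hypothesis $Z\times_Y \Supp P\times_X X' = \emptyset$ says precisely that this support lies over $Y' = Y\setminus Z$, i.e.\ $\Supp (j^X)^*P = Y'\times_Y \Supp (j^X)^*P$. Lemma \ref{lm:CorIm} then produces a unique quadratic space $Q' \in Q(\mathcal{P}(X',Y'))$ with $j^Y\circ Q' = (j^X)^*(Q)$ (on the level of quadratic spaces, not just classes).

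Next I would pass to the Grothendieck--Witt classes and check the cocycle condition. With the notation of Definition \ref{GWCorpair}, using $i = j^X$ and $j = j^Y$, one has
\[ d_1([Q'],[Q]) = (j^Y\circ [Q'],\,[Q]\circ j^X) = ([j^Y\circ Q'],\,[(j^X)^*Q]). \]
By the identity $j^Y\circ Q' = (j^X)^*Q$ obtained in the previous step these two components coincide (modulo sign in the chain complex), so $([Q'],[Q])$ represents a well-defined class $\Phi \in GWCor^{\rm pair}((X,X'),(Y,Y'))$. The canonicity follows from the uniqueness clause of Lemma \ref{lm:CorIm}, which makes the assignment $Q\mapsto Q'$ independent of any choice.

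For the second assertion, the hypothesis $Z\times_Y \Supp P = \emptyset$ allows to apply Lemma \ref{lm:CorIm} directly to $Q$ on $Y\times X$, producing $\widetilde Q \in Q(\mathcal{P}(X,Y'))$ with $j^Y\circ \widetilde Q = Q$. Then $d_0([\widetilde Q]) = ([\widetilde Q \circ j^X],[j^Y\circ \widetilde Q]) = ([(j^X)^*\widetilde Q],[Q])$. Applying the naturality $j^Y\circ (j^X)^*\widetilde Q = (j^X)^*(j^Y\circ \widetilde Q) = (j^X)^*Q = j^Y\circ Q'$ and the uniqueness part of Lemma \ref{lm:CorIm}, one obtains $(j^X)^*\widetilde Q = Q'$. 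Hence $d_0([\widetilde Q]) = ([Q'],[Q])$ and $\Phi = 0$.

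The only genuine ingredient is the base-change compatibility $(j^X)^*\circ j^Y_* = j^Y_*\circ (j^X)^*$ used to invoke uniqueness in the last paragraph; this is immediate because $j^X_Y$ and $j^Y_X$ form a cartesian square of open immersions, but it is worth recording carefully since the entire statement rests on it.
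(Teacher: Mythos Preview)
Your proof is correct and follows exactly the approach the paper takes: apply Lemma~\ref{lm:CorIm} to $(j^X)^*(Q)$ to obtain $Q'$ and establish the first assertion, and apply Lemma~\ref{lm:CorIm} directly to $Q$ to obtain the lift $\widetilde{Q}$ witnessing the coboundary for the second assertion. The paper's own proof is a one-line pointer to these two applications of Lemma~\ref{lm:CorIm}; you have simply unpacked the cocycle and coboundary verifications in the complex of Definition~\ref{GWCorpair}, including the base-change identity for open immersions needed to identify $(j^X)^*\widetilde{Q}$ with $Q'$.
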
\begin{proof}
Both statements immediate follows from lemma \ref{lm:CorIm}
applied for the first one to $(j^X)^*(Q)$ and for the second to $Q$.
\end{proof}

\subsection{Construction of quadratic space from a function on a curve.}

\begin{definition}\label{def:OrCurFinFun}
A \emph{curve} $C$ over a base scheme $S$ a scheme $C$ over $S$ such that dimension of all fibres of $C$ over $S$ is one, and we say that a curve $C$ is relative smooth if canonical morphism $C\to S$ is smooth.

An \emph{orientation} on smooth relative curve $C$ any trivialisation of its canonical class, i.e.
an isomorphism $\mu\colon \omega_S(C)\simeq \cal O(C)$.

We say that a regular function $f$ on relative curve $pr\colon C\to S$ is \emph{relatively finite} if
morphism $C \xrightarrow{(f, pr)}\affl\times S$ is finite.

So
an \emph{oriented relative curve} $C$ 
with a relatively finite function $f$ with support $Z$
a a set
$(C,\mu,f,Z)$,
where $C\to S$ is a smooth relative curve,
$\mu$ is an orientation,
$f$ is a relatively finite function,
$Z=Z(f)$.
For a given finite scheme $Z$ over a scheme $S$, 
denote by $OrCur_S^Z$ 
the set of isomorphism classes of oriented curves with relatively finite function over $S$. 
\end{definition}

\begin{proposition}\label{prop:constrOrCurFQCor}
There is map
$$\begin{array}{rccc}
\langle - \rangle \colon& OrCur^Z_S &\to & Q(\cal P(Z\to S))\\
& (\cal C,\mu, f , Z) &\mapsto & (k[Z],q) 
,\end{array}$$
that is natural in respect to the base changes.
In other words this map 
takes any smooth oriented curve with relatively finite regular function $\cal C$ over $S$, 
with orientation $\mu\colon \omega_{\cal C}\otimes\omega^{-1}_S\simeq \cal O(\cal C)$ 
and relatively finite function $f\in k[\cal C]$ with vanish locus $Z$
to an invertible function $q$ in $k[Z]$ in a natural way over $S$.
\end{proposition}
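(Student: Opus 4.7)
The construction is dictated by Grothendieck duality for the finite flat morphism $\pi\colon Z\to S$ (finite and flat since $Z$ is an effective Cartier divisor on $\mathcal{C}$ cut out by the relatively finite function $f$) combined with the adjunction formula for the regular codimension one immersion $Z\hookrightarrow\mathcal{C}$. The plan is to use the data $(\mu,f)$ to produce a canonical trivialisation of the invertible $\mathcal{O}_Z$-module $\omega_{Z/S}$; under the Grothendieck duality isomorphism $\omega_{Z/S}\simeq D_S(\mathcal{O}_Z)$ this trivialisation becomes a non-degenerate $\mathcal{O}_Z$-linear pairing $q\colon\mathcal{O}_Z\xrightarrow{\sim} D_S(\mathcal{O}_Z)$, which upon identifying $D_S(\mathcal{O}_Z)$ with $\mathcal{O}_Z$ via the same trivialisation is recorded as an invertible element $q\in k[Z]$.

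Concretely I would proceed in three steps. First, apply the adjunction formula to get $\omega_{Z/S}\simeq(\omega_{\mathcal{C}/S}\otimes\mathcal{O}_{\mathcal{C}}(Z))|_Z$. Second, use the orientation $\mu$ to trivialise $\omega_{\mathcal{C}/S}\simeq\mathcal{O}_{\mathcal{C}}$, and use the defining section $f\in\Gamma(\mathcal{O}_{\mathcal{C}}(-Z))$ — equivalently, its dual generator $1/f$ of $\mathcal{O}_{\mathcal{C}}(Z)$ — to trivialise $\mathcal{O}_{\mathcal{C}}(Z)\simeq\mathcal{O}_{\mathcal{C}}$. Third, compose these trivialisations with the adjunction isomorphism to obtain the required $\mathcal{O}_Z$-linear iso $\omega_{Z/S}\simeq\mathcal{O}_Z$, which yields the quadratic space $(k[Z],q)\in Q(\mathcal{P}(Z\to S))$. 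Symmetry of the pairing is automatic since both sides are invertible rank one $\mathcal{O}_Z$-modules, and naturality in $S$ follows from naturality of adjunction, of pullback of orientations, and of pullback of equations, so the assignment is well-defined as a map $OrCur^Z_S\to Q(\mathcal{P}(Z\to S))$.

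The main obstacle is verifying the compatibility of the above trivialisation with Grothendieck duality in the ramified case $f=ut^n$ with $n>1$ at some component of $Z$. There the naive trace pairing $(a,b)\mapsto\tr(ab)$ on $k[Z]$ degenerates on the fibre, so one cannot use it directly; instead, the generator of $\omega_{Z/S}$ produced by $(\mu,f)$ corresponds, via Grothendieck duality, to a \emph{twisted} trace that remains non-degenerate at every point of $Z$. The heart of the proof is a local calculation, with a uniformiser along each component of $Z$ and the factorisation $f=ut^n$, showing that this twisted trace is indeed non-degenerate and reading off the resulting invertible function $q\in k[Z]$ explicitly, uniformly in the base $S$.
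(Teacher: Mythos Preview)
Your approach is correct but takes a genuinely different route from the paper. The paper does not work directly with the finite morphism $Z\to S$; instead it uses the relatively finite function $f$ to produce the finite flat morphism $F=(f,pr_S)\colon \mathcal C\to \affl_S$ between \emph{smooth} $S$-schemes, invokes Proposition~2.1 of Ojanguren--Panin \cite{OP_WittPurity} to obtain the duality isomorphism $\omega(\mathcal C)\simeq \mathcal Hom_{k[\affl_S]}(k[\mathcal C],k[\affl_S])$, and then simply base-changes along the zero section $i_0\colon S\hookrightarrow \affl_S$ to get $q\colon k[Z]\simeq Hom_{k[S]}(k[Z],k[S])$. The orientation $\mu$ (together with the canonical trivialisation of $\omega_{\affl_S/S}$) enters as the trivialisation of $\omega(\mathcal C)$, and the function $f$ enters only through the base-change step. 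Your approach via the adjunction formula for $Z\hookrightarrow\mathcal C$ and Grothendieck duality for $Z\to S$ is essentially the fibre-at-$0$ of the paper's construction, but carried out directly on $Z$; the paper's detour through $\mathcal C\to\affl_S$ buys that one never has to handle the possibly non-reduced scheme $Z$ explicitly, since the duality is established between smooth schemes and then specialised.

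Your ``main obstacle'' paragraph is therefore a self-imposed difficulty. Once you identify $\omega_{Z/S}$ with $\pi^!(\mathcal O_S)=D_S(\mathcal O_Z)$ via Grothendieck duality for the finite flat (automatically Gorenstein, since $Z$ is a Cartier divisor in a smooth $S$-scheme) morphism $\pi\colon Z\to S$, non-degeneracy of the resulting pairing is tautological: it is an isomorphism of invertible $\mathcal O_Z$-modules by construction, regardless of whether $f$ has multiple zeros. No local calculation in a uniformiser is needed; the ``twisted trace'' you allude to is precisely the Grothendieck trace, and its non-degeneracy is part of the duality formalism. If you drop that paragraph and state clearly that you are using $\omega_{Z/S}\simeq\pi^!\mathcal O_S$ for the Gorenstein finite flat $\pi$, your argument is complete.
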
\begin{proof}
Consider the regular map $F= (f,pr_S)\colon \cal C\to \affl_S$.
By assumption $F$ is finite morphism of smooth schemes over $S$,
and since $\affl_S$ is affine then $\cal C$ is affine too
and $F$ is flat (by corollary V.3.9. and theorem II.4.7 \cite{AK}).
So we can apply proposition 2.1 from \cite{OP_WittPurity}
and get isomorphism 
$$\omega(\cal C)\simeq \cal Hom_{k[\affl_S]}(k[\cal C],k[\affl_S])$$
that respects base changes. 
Then using base change along the zero section $i_0\colon S\to \affl_S$
we get the isomorphism
$$q\colon k[Z]\simeq Hom_{k[S]}(k[Z],k[S])$$
that defines required quadratic space.

\end{proof}

\begin{lemma}\label{lm:sect-finfunct}
Suppose $\overline C\to S$ is a relative projective curve over a local base $S$,
$\cal L$ is a very ample invertible seheaf on $\overline C$, and
$s,d\in \Gamma(\overline C, \cal L)$ are sections such that 
$Z(d)\cap E\neq \emptyset$, for each
irreducible component $E$ of the closed fibre of $\overline{C}$,
$Z(s)\neq \emptyset$, $Z(s)\cap Z(d)=\emptyset$;
then the function $s/d\colon \overline C-Z(d)\to \affl$ is relatively finite.
\end{lemma}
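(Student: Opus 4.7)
The plan is to realise $s/d$ as the restriction of a proper morphism to a standard affine chart, and then to deduce finiteness from an affineness argument. Since $Z(s)\cap Z(d)=\emptyset$, the two sections $s,d$ jointly generate $\cal L$ at every point of $\overline C$, so they define a morphism of $S$-schemes
\[
\varphi=[s:d]\colon\overline C\to\Proj^1_S.
\]
Projectivity of $\overline C$ over $S$ makes $\varphi$ proper, and the preimage of the affine chart $\affl_S=\{d\neq 0\}\subset\Proj^1_S$ is exactly $\overline C\setminus Z(d)$. On this preimage $\varphi$ coincides with the morphism $g:=(s/d,\mathrm{pr}_S)\colon\overline C\setminus Z(d)\to\affl_S$ whose finiteness we want. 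Hence $g$ is proper.

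I would then invoke Serre's classical theorem: if $\cal L$ is an ample line bundle on a proper $S$-scheme $X$ and $t\in\Gamma(X,\cal L)$ is a global section, then $X\setminus Z(t)$ is affine over $S$. Applied to our very ample $\cal L$ and the section $d$, this yields that $\overline C\setminus Z(d)$ is affine over $S$. The target $\affl_S$ is also affine since $S$ is local. Therefore $g$ is a morphism between affine $S$-schemes, hence affine; but a proper affine morphism of noetherian schemes is finite, and so $g$ is finite, which is exactly the assertion that $s/d$ is relatively finite.

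A side remark on the hypotheses: the condition ``$Z(d)\cap E\neq\emptyset$ for each irreducible component $E$ of the closed fibre'' is in fact automatic from the very ampleness of $\cal L$, since $\cal L|_E$ is then very ample on the positive-dimensional projective scheme $E$, so either $d|_E=0$ (and $E\subset Z(d)$) or $d|_E\neq 0$ (and then it must vanish somewhere on $E$); in either case $Z(d)\cap E\neq\emptyset$. The step requiring the most care is the appeal to Serre's affineness theorem for complements of sections of an ample line bundle on a proper scheme; the rest follows from standard facts (projective implies proper, preimage of an affine chart is open, proper $+$ affine $=$ finite for noetherian schemes).
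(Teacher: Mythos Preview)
Your proof is correct and follows essentially the same route as the paper's: both define the morphism $[s:d]\colon\overline C\to\Proj^1_S$, observe that it is proper (the paper says projective), identify $(s/d,\mathrm{pr}_S)$ as its base change to $\affl_S$, and then conclude finiteness from the fact that $\overline C\setminus Z(d)$ is affine because $d$ is a section of a very ample bundle. Your side remark that the hypothesis $Z(d)\cap E\neq\emptyset$ is automatic is a valid observation; neither proof actually invokes it.
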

\begin{proof}
Consider the regular map 
$\overline{f}=([s\colon d],c)\colon \overline C\to \affl\times S$, where $c\colon \overline C\to S$ denotes the canonical morphism. The morphism $\overline{f}$ is projective and
$f=(s/d,c)\colon \overline C-Z(d)\to \affl\times S$ is the base change of $\overline{f}$ along the immersion $\affl\times S\hookrightarrow \prl\times \affl$,
hence $f$ is projective.

On the other hand since $d$ is section of very ample invertible sheaf $\cal L$,
it follows that $C-Z(d)$ is affine. Hence $f$ is affine morphism.
Thus $f$ is finite.
\end{proof}

\section{Quillen's trick and compactification}\label{sect:QuiltrCompact}

Here we summarise some some technical facts and geometric construction 
providing compactifications with ample bundles for some relative curves
used in the next sections. 
We start with following standard facts: 
\begin{lemma}\label{lm:corSerreth}
Suppose $\cal O(1)$ is an ample invertible sheave on a scheme $X$, $Z\subset X$ is a closed subscheme,
then there is an integer $L$ such that for all $l>L$, 
the restriction homomorphism $\Gamma(X,\cal O(l))\to \Gamma(Z,\cal O(l)\big|_{Z})$ is surjective.
\end{lemma}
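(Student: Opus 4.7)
The plan is to reduce the statement to Serre's vanishing theorem via the ideal-sheaf exact sequence. Let $\mathcal{I}_Z \subset \mathcal{O}_X$ denote the sheaf of ideals defining the closed subscheme $Z$.

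First I would tensor the short exact sequence $0 \to \mathcal{I}_Z \to \mathcal{O}_X \to \mathcal{O}_Z \to 0$ with the locally free (hence flat) sheaf $\mathcal{O}(l)$, which preserves exactness, obtaining
$$0 \to \mathcal{I}_Z(l) \to \mathcal{O}(l) \to \mathcal{O}(l)\big|_Z \to 0.$$
Taking the associated long exact cohomology sequence yields an exact fragment
$$\Gamma(X,\mathcal{O}(l)) \to \Gamma(Z,\mathcal{O}(l)\big|_Z) \to H^1(X,\mathcal{I}_Z(l)),$$
so it suffices to arrange that the rightmost group vanishes for $l$ sufficiently large.

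Next I would apply Serre's vanishing theorem to the ample invertible sheaf $\mathcal{O}(1)$ and the coherent sheaf $\mathcal{I}_Z$: there exists $L$ such that $H^i(X,\mathcal{I}_Z(l)) = 0$ for all $i>0$ and all $l>L$. Specializing to $i=1$ and combining with the previous step gives the desired integer.

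The main (and essentially only) obstacle is that Serre's vanishing requires a properness hypothesis on $X$; ample alone does not suffice in full generality over a noetherian base. In the applications that follow (most notably to projective compactifications of relative curves, as in Lemma~\ref{lm:sect-finfunct}), $X$ will be projective over the relevant base, so the hypothesis is satisfied. I would therefore either treat projectivity as an implicit standing assumption throughout the section or explicitly strengthen the hypothesis of the lemma to ``$X$ projective with ample $\mathcal{O}(1)$'', since that is the only form used subsequently.
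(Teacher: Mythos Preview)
Your argument is correct and is precisely the standard unpacking of what the paper means by ``corollary of the Serre theorem [Hartshorne, III.5.2]'': the paper gives no further details, and your ideal-sheaf sequence plus Serre vanishing is the intended proof. Your observation that projectivity is a missing but implicitly assumed hypothesis is also accurate --- every invocation of the lemma in the paper is on a projective relative curve $\overline{\mathcal X}$ over a local base, so adding that hypothesis is harmless and indeed necessary.
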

\begin{lemma}\label{lm:partcasecorSerreth}
Suppose 
$U$ is a local scheme, 
$\mathcal C\to U$ is a morphism of schemes, 
$\cal O(1)$ is an ample invertible sheave on $\mathcal C$, 
and $Z_1,Z_2\subset X$ are closed subschemes such that
$Z_2$ is finite over $U$ and $Z_1\cap Z_2=\emptyset$;
then 
for some integer $L$, for all $l>L$, 
there is a section 
$s\in \Gamma(C,\cal O(l))$ such that $s\big|_{Z_1}=0$, and $s\big|_{Z_2}$ is invertible.
\end{lemma}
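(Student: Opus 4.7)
The strategy is to apply the ample-extension lemma \ref{lm:corSerreth} to the closed subscheme $Z := Z_1\sqcup Z_2 \subset \cal C$, which is well-defined because $Z_1\cap Z_2=\emptyset$. This yields an integer $L$ such that for every $l>L$ the restriction map
\[ \Gamma(\cal C,\cal O(l)) \twoheadrightarrow \Gamma(Z,\cal O(l)|_Z) = \Gamma(Z_1,\cal O(l)|_{Z_1})\oplus \Gamma(Z_2,\cal O(l)|_{Z_2}) \]
is surjective. Hence it is enough to produce, for some sufficiently divisible $l$, a nowhere-vanishing section $u\in \Gamma(Z_2,\cal O(l)|_{Z_2})$: lifting the pair $(0,u)$ through this surjection to a global section $s\in \Gamma(\cal C,\cal O(l))$ gives a section with $s|_{Z_1}=0$ and $s|_{Z_2}=u$ invertible, which is what is wanted.

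To construct $u$ I would exploit that the base $U$ is local: write $U=\Spec R$ with $R$ local, maximal ideal $\mm$. Since $Z_2\to U$ is finite, $Z_2=\Spec B$ for a finite $R$-algebra $B$. Such a $B$ is semi-local, because $B/\mm B$ is an Artinian $R/\mm$-algebra with only finitely many maximal ideals, and by going-up every maximal ideal of $B$ lies above $\mm$. Over a semi-local ring every finitely generated projective module of constant rank is free, so $\cal O(l)|_{Z_2}$ is trivial and admits a global trivialising (hence nowhere-vanishing) section $u$ for every $l$.

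Combining the two steps finishes the plan: fix any $l>L$, choose $u$ as above, and lift $(0,u)$ to $s\in \Gamma(\cal C,\cal O(l))$ via the surjection provided by \ref{lm:corSerreth}. The only mildly non-formal input is the triviality of line bundles on the semi-local affine scheme $Z_2$; everything else is an immediate repackaging of lemma \ref{lm:corSerreth} together with the disjointness hypothesis $Z_1\cap Z_2=\emptyset$. I do not anticipate a serious obstacle — the lemma is essentially a specialisation of Serre's theorem tailored to the geometric situation needed later.
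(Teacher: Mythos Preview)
Your proposal is correct and matches the paper's intended argument: the paper's proof consists of the single line ``The second lemma follows from the first,'' and you have spelled out precisely how, via the surjection of Lemma~\ref{lm:corSerreth} applied to $Z_1\sqcup Z_2$ together with the triviality of line bundles on the semi-local scheme $Z_2$. The passing phrase ``for some sufficiently divisible $l$'' is a slip in wording---as you yourself note two lines later, triviality of $\cal O(l)|_{Z_2}$ holds for \emph{every} $l$, which is what the statement requires.
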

\begin{proof}
The claim of the first lemma is corollary of the Serre theorem \cite[ch. 3, theorem 5.2]{Hs_AG}. 
The second lemma follows form the first. 
\end{proof}

\begin{definition}\label{def:nisneigh}
Suppose $Z$ is a closed subscheme of a scheme $X$ over some base $S$.
A \emph{Nisnevich neighbourhood}
$\pi\colon (X^\prime,Z)\to (X,Z)$ 
is an etale morphism $\pi\colon X\prime\to X$ and closed subscheme $Z^\prime\subset X^\prime$ 
such that $\pi$ induces an isomorphism $Z^\prime\simeq Z$.
For a pair of Nisnevich neighbourhoods 
$\pi_1\colon (X^\prime_1,Z_1)\to (X_1,Z_1)$ 
and 
$\pi_2\colon (X^\prime_2,Z_2)\to (X_2,Z_2)$, 
a \emph{morphism of Nisnevich neighbourhoods} $\pi_1\to \pi_2$
is a set of four morphisms 
$X^\prime_1\to X^\prime_2$ and $Z^\prime_1\to Z^\prime_2$
$X_1\to X_2$ and $Z_1\to Z_2$, 
such that corresponding cube is commutative 
(i.e. $\pi_1\to\pi_2$ is a morphism in the category of arrows of the category of arrows of the category of schemes).
\end{definition}\begin{definition}\label{def:compact}
A \emph{good relative compactification} of quasi-finite morphism of curves $\pi\colon X^\prime\to X$ over a base scheme $S$
the following set of data:
\begin{itemize}[leftmargin=15pt]
\item[1)]
a finite morphism $\overline{\pi}\colon \ovXp\to \ovX$ of projective schemes over $S$ 
with commutative diagram
in the category of schemes over $S$
$$\xymatrix{
\ovXp \ar[r]^{\ovpi}& \ovX \\
\pri X\ar[r]^{\pi}\ar@{^(->}[u]^{\pri j} & X\ar@{^(->}[u]^j
,}$$ 
where $j$ and $\pri j$ are open immersions
such that $\ovXp\setminus X^\prime$ and $\ovX\setminus X$
are finite over $S$;
\item[2)]
a very ample invertible sheave $\cal O(1)$ on $\ovX$,
such that $\ovpi^*(\cal O(1))$ is very ample too,
and 
a pair of sections $d^\prime\in \Gamma( \ovXp , \ovpi^*(\cal O(1)) )$, $d\in \Gamma(\ovX,\cal O(1))$, 
such that $\ovXp-Z(d^\prime)=\pri X$ and $\ovX-Z(d)=X$.
\end{itemize}
\end{definition}

\begin{lemma}\label{lm:relCur:comp:projection}
Suppose $\aff^d_k$ is an affine spaces of dimension $d$ over an infinite field $k$,
$z\in \aff^d_k$ is a point,
and $B,B_1\subset \aff^d_k$ are closed subschemes 
such that $B\neq \aff^d_k$, $codim\,B_1\geq 2$.
Then there is a linear projection $pr\colon \aff^d_k\to \aff^{d-1}_k$ such that 
the induced projection $B\to \aff^{d-1}_k$ is finite,
and $pr^{-1}(pr(z))\cap B_2=\emptyset$.
\end{lemma}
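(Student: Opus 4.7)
The plan is to parametrize linear projections $\aff^d_k \to \aff^{d-1}_k$ by points at infinity and then use a dimension count together with the hypothesis that $k$ is infinite. Embed $\aff^d_k \hookrightarrow \prl^d_k$ as the complement of the hyperplane at infinity $H_\infty \cong \prl^{d-1}_k$. For any $k$-point $p \in H_\infty$ not lying on the closure $\overline B \subset \prl^d_k$ of $B$, the projection from $p$, i.e.\ the linear projection $\prl^d_k \dashrightarrow \prl^{d-1}_k$ with center $p$, restricts to a morphism $\overline B \to \prl^{d-1}_k$ and to a linear projection $pr_p\colon \aff^d_k \to \aff^{d-1}_k$. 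Thus the data of a linear projection is equivalent to the data of such a point $p \in H_\infty$.

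I would then identify the two ``bad'' loci in $H_\infty$ that $p$ must avoid. First, the induced map $\overline B \to \prl^{d-1}_k$ is proper and has all fibres of finite length whenever $p \notin \overline B$: indeed, a fibre over $q \in \prl^{d-1}_k$ is $\overline B \cap L$, where $L$ is the line in $\prl^d_k$ spanned by $p$ and $q$; if $L \subset \overline B$ then $p \in \overline B$, contradiction, so $\overline B \cap L$ is a proper closed subset of $L$ and hence finite. Therefore finiteness of $pr_p\colon B \to \aff^{d-1}_k$ is guaranteed by the condition
$$p \notin T_1 := \overline B \cap H_\infty.$$
Since $B \neq \aff^d_k$ we have $\dim \overline B \leq d-1$, and since $B$ is dense in $\overline B$ in the affine chart the intersection with $H_\infty$ satisfies $\dim T_1 \leq d-2$.

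Secondly, the fibre $pr_p^{-1}(pr_p(z))$ is the affine line $\ell_{z,p} \subset \aff^d_k$ through $z$ in the direction determined by $p$. The set of $p \in H_\infty$ for which $\ell_{z,p}$ meets $B_1$ equals the image of $B_1$ under the projection $\pi_z\colon \aff^d_k \setminus \{z\} \to H_\infty$ from $z$. Denoting by $T_2 \subset H_\infty$ the closure of $\pi_z(B_1 \setminus \{z\})$, we have $\dim T_2 \leq \dim B_1 \leq d-2$ by the codimension hypothesis. Thus $T_1 \cup T_2 \subset H_\infty \cong \prl^{d-1}_k$ is a proper closed subset of pure dimension at most $d-2$.

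Since $k$ is infinite, $\prl^{d-1}_k(k)$ is not contained in any proper closed $k$-subvariety, so we may choose $p \in H_\infty(k) \setminus (T_1 \cup T_2)$. The corresponding projection $pr_p$ then satisfies both required properties: the induced map $B \to \aff^{d-1}_k$ is finite (proper and quasi-finite), and $pr_p^{-1}(pr_p(z)) = \ell_{z,p}$ avoids $B_1$. The only potential obstacle is the slightly delicate verification that $\dim T_1 \leq d-2$, but this follows because any component of $\overline B$ of dimension $d-1$ is the closure of a component of $B$ and so meets $H_\infty$ in dimension exactly $d-2$; I would spell this out by noting $\overline B \setminus B \subset H_\infty$ has dimension strictly less than $\dim \overline B$.
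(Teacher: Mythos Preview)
Your proof is correct and follows essentially the same approach as the paper: parametrize linear projections by the hyperplane at infinity $H_\infty\simeq\mathbb{P}^{d-1}$, show that each of the two required conditions fails only on a closed subset of dimension at most $d-2$, and then use that $k$ is infinite to find a $k$-point avoiding both. Your treatment is in fact more careful than the paper's---you spell out the fibre argument for finiteness and the base-change identification $B=\overline B\times_{\mathbb{P}^{d-1}}\mathbb{A}^{d-1}$ is implicit but standard---whereas the paper simply asserts that the first condition cuts out the complement of $\overline B\cap H_\infty$ and handles the second by first reducing to the case of a rational point $z$.
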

\begin{proof}
Both of the required conditions on the projection $pr$ defines 
Zariski open suschemes in the projective space of linear projections $\aff^d\to \aff^{d-1}$, which is equal to the infinity subspace $\mathbb P^{d-1}$ in $\mathbb P^d$.
Namely, $U_1$ is a complement to the intersection $\mathbb P^{d-1}\cap \overline B$
And let's denote the second one as $U_2$.

Then $U_1\neq\emptyset$, since $\dim(\mathbb P^{d-1}\setminus U_1)= \dim (\mathbb P^{d-1}\cap \overline B)\leq \dim\,B-1\leq d-2$.
Since the case field $k$ is infinite,
to prove that $U_2\neq\emptyset$ it is enough to consider the case of rational point $z\in \mathbb A^d_k$.
In such case $\mathbb P^{d-1}\setminus U_2$ is image of $B_1$ under the projection to the infinity subspace $\mathbb P^{d-1}$ with the center at the point $z$, and hence $\dim (\mathbb P^{d-1}\setminus U_2) \leq \dim B_2\leq d-2$. 

\end{proof}
Also we use following corollary of the Zariski main theorem (\cite{GD67} theorem 8.12.6).
\begin{proposition}\label{prop:ZarMianTh}
For any etale morphism $e\colon U\to Y$ there is a decomposition
$U\xrightarrow{u} X\xrightarrow{p} Y$, $p\circ u = e$ 
with $u$ dense open immersion and $p$ finite.
\end{proposition}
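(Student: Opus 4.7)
The plan is to apply Zariski's Main Theorem in the form given by Grothendieck--Dieudonn\'e and then refine the factorization to make the open immersion dense. Since $e\colon U\to Y$ is \'etale between schemes that, by the standing convention announced in the Notation section, are separated and of finite type, $e$ is in particular a quasi-finite separated morphism of finite presentation. Therefore the general Zariski Main Theorem (\cite{GD67}, theorem 8.12.6) applies and produces a factorization $U\xrightarrow{u_0} X_0\xrightarrow{p_0} Y$ with $u_0$ an open immersion and $p_0$ finite. This is the only nontrivial input; everything else is formal.

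To upgrade $u_0$ to a \emph{dense} open immersion I would replace $X_0$ by the scheme-theoretic closure $X\subseteq X_0$ of the open subscheme $u_0(U)$. The restriction $u\colon U\to X$ remains an open immersion (the image of $U$ in $X_0$ lies inside the closed subscheme $X$, so the open immersion pulls back to an open immersion), and by construction $u(U)$ is schematically dense in $X$. The restriction $p\colon X\to Y$ of $p_0$ is the composition of the closed immersion $X\hookrightarrow X_0$ with the finite morphism $p_0$, hence finite. This produces the desired decomposition $U\xrightarrow{u} X\xrightarrow{p} Y$.

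The hard content lies entirely in EGA's version of Zariski's Main Theorem; the passage to the closure is a routine bookkeeping step. If one wanted to avoid citing theorem 8.12.6 and give a self-contained argument, the alternative would be to cover $Y$ by affine opens $\Spec A_i$, take the integral closures $B_i$ of $A_i$ in the coordinate rings of the (affine) connected components of the preimages $e^{-1}(\Spec A_i)$, glue the $\Spec B_i$ together, and verify that each $U_i\to \Spec B_i$ is an open immersion using the fact that \'etale morphisms are locally standard; but this essentially reconstructs the proof of ZMT and is strictly less efficient than citing it. Accordingly, the main and only real obstacle is packaged inside the EGA reference, and the proposition itself is a one-line corollary.
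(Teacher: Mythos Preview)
Your proposal is correct and essentially identical to the paper's own proof: both invoke the Zariski Main Theorem to obtain an open-immersion/finite factorization, then replace the intermediate scheme by the closure of the image of $U$ to force the open immersion to be dense, noting that the resulting map to $Y$ is still finite as a closed immersion followed by a finite morphism. There is nothing to add.
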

\begin{proof}
By the Zariski main theorem there is a decomposition 
$U\xrightarrow{\tilde u} \tilde X\xrightarrow{\tilde p} Y$
such that $\tilde u$ is open immersion and $\tilde p$ is finite.
Then if we put $X= Cl_{\tilde X}(U)$ (i.e. closure of $U$ in $\tilde X$) and define 
$u\colon U\to X$ and $p=\tilde p\big|_{X}$.
Then $u$ is dense open immersion and $p$ is finite, since it is composition of a closed embedding and a finite morphism. 
\end{proof}

\begin{lemma}\label{lm:et-ex:relCures}

Suppose $\pi \colon (X^\prime,Z^\prime)\to (X,Z)$
is Nishevich neighbourhood 
with smooth $X^\prime$ and $X$
and suppose $z^\prime\in Z^\prime$, $z\in Z$ are points such that $\pi(z^\prime)=z$;
then
there are a (relative) Nisnevich neighbourhood 
$\varpi\colon (\ovbX,\bZp)\to (\cal X,\cal Z)$ over an essential smooth local base $S$,
a morphism $\varpi\to \pi$
and a good relative compactification $\varpi\to \ovvarpi$
\begin{equation}\label{eq:reCurves:diag}\xymatrix{
X^\prime \ar[d]^{\pi} & \bXp\ar[d]^{\varpi}\ar[l]^{\pri v}\ar[r]^{\pri j} & \overline{\bXp}\ar[d]^{\overline\varpi} &\\
X & \cal X\ar[l]^v\ar[r]^j & \overline{\cal X} \ar[r] & S\\
}\end{equation}
such that :\begin{itemize}[leftmargin=15pt]
\item[1)] $v$ and $\pri v$ are pro-limits of open immersions,
\item[2)] there are lifts of $z^\prime$ and $z$ in $\cal X^\prime$ and $\cal X$,
\item[3)]
$\cal Z^\prime = v^{-1}(Z)$, $\cal Z = {\pri v}^{-1}(\pri Z)$ and both schemes are finite over $S$. 
\item[4)] $\bXp$ and $\cal X$ are smooth over $S$ and the relative canonical classes of both schemes are trivial.
\item[5)] 
$\ovbXp\setminus \bXp$ and $\ovbX\setminus \bX$ are finite over $S$.
\end{itemize}



\end{lemma}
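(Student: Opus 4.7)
The plan is to realise $X$ near $z$ as a smooth relative curve over an essentially smooth local base $S$ via a Quillen-type linear projection, compactify projectively, and then propagate the data to the upstairs side $X'$ using Zariski's main theorem (Proposition~\ref{prop:ZarMianTh}).

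First I would shrink $X$ Zariski-locally around $z$ so that $X$ admits a closed immersion $X \hookrightarrow \affl^d_k$ with $d = \dim X$. Let $\overline{X} \subset \prl^d_k$ be the projective closure and write $B_\infty = \overline X \setminus X$, which lies in the hyperplane at infinity and so has dimension at most $d-1$. Apply Lemma~\ref{lm:relCur:comp:projection} with $B = Z$ and an auxiliary subscheme $B_1$ chosen so that the resulting linear projection $pr\colon \affl^d_k \to \affl^{d-1}_k$ has $Z \to \affl^{d-1}_k$ finite and has $B_\infty \to \prl^{d-1}_k$ finite over a Zariski neighbourhood of $pr(z)$. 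Take $S = \Spec \cal O_{\affl^{d-1}_k, pr(z)}$ and set $\bX = X \times_{\affl^{d-1}_k} S$ and $\ovbX = \overline X \times_{\prl^{d-1}_k} S$. Then $\bX \to S$ is a smooth relative curve; the map $v\colon \bX \to X$ is a pro-limit of open immersions (condition~(1)); $z$ lifts tautologically (condition~(2)); $\cal Z = v^{-1}(Z)$ is finite over $S$ (condition~(3)); and $\ovbX \setminus \bX$ is finite over $S$ (condition~(5) downstairs). The given projective embedding supplies a very ample sheaf $\cal O(1)$ on $\ovbX$.

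For the upstairs data, apply Proposition~\ref{prop:ZarMianTh} to the \'etale morphism $\pi$ to obtain a factorisation $X' \xrightarrow{u} Y \xrightarrow{p} X$ with $u$ a dense open immersion and $p$ finite, and let $\ovbXp$ be the connected component of $Y \times_X \ovbX$ containing a chosen lift of $z'$. This yields a finite morphism $\overline{\varpi}\colon \ovbXp \to \ovbX$; setting $\bXp = \varpi^{-1}(\bX)$ gives $\ovbXp \setminus \bXp$ finite over $S$ (condition~(5) upstairs), and $\overline{\varpi}^*(\cal O(1))$ is very ample as a pullback of a very ample sheaf along a finite morphism. The sections $d \in \Gamma(\ovbX, \cal O(1)^{\otimes l})$ and $d' = \overline{\varpi}^*(d)$ cutting out the boundaries, required by Definition~\ref{def:compact}, are produced by Lemma~\ref{lm:partcasecorSerreth} with $Z_1 = \ovbX \setminus \bX$ and $Z_2$ a lift of $z$, after passing to a sufficiently high twist.

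The main obstacle I expect is condition~(4), triviality of the relative canonical classes of $\bX$ and $\bXp$ over $S$. Since $\bX \to S$ is smooth of relative dimension one, $\omega_{\bX / S}$ is an invertible sheaf, but $\bX$ need not be affine, so triviality is not automatic. To secure it I would further shrink $\bX$ along $v$ (preserving condition~(1)) to a Zariski open affine neighbourhood of $\cal Z$ inside the closed fibre; such a neighbourhood exists because $\cal Z$ is finite over the local base $S$ and hence supported at finitely many closed points of $\bX$, and on such an affine open the line bundle $\omega_{\bX/S}$ admits a trivialising section. Finiteness of $\overline{\varpi}$ then propagates the trivialisation to $\omega_{\bXp/S}$ after an analogous Zariski shrinking of $\bXp$, and the shrinkings are compatible with all previously arranged conditions.
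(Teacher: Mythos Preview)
Your overall strategy (Quillen-type projection, compactify, apply Zariski's main theorem for the upstairs) matches the paper, but the first step has a genuine gap. You write ``shrink $X$ Zariski-locally around $z$ so that $X$ admits a closed immersion $X \hookrightarrow \affl^d_k$ with $d = \dim X$''. A smooth irreducible variety of dimension $d$ cannot embed as a \emph{closed} subscheme of $\affl^d$ unless it is $\affl^d$ itself, so this step fails for essentially every $X$. The paper avoids this by taking an \emph{\'etale} morphism $e\colon X \to \affl^d$ (which always exists locally on a smooth variety), then using Proposition~\ref{prop:ZarMianTh} to factor $e$ as a dense open immersion followed by a finite map $p\colon \overline{X}\to\affl^d$; the projection lemma is applied to the image under $p$ of the various bad loci, and the projective compactifications are built afterwards over $\prl_S$ via a second application of Proposition~\ref{prop:ZarMianTh}. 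Once you replace your closed immersion by an \'etale map, your ``projective closure $\overline X\subset\prl^d$'' no longer exists, and the construction has to be redone along the paper's lines.

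Two further points. First, your $\ovbXp$ is defined as a component of $Y\times_X\ovbX$, but $\ovbX$ is not a scheme over $X$, so this fibre product is not well-formed; the paper instead applies Proposition~\ref{prop:ZarMianTh} to the composite $X'\xrightarrow{\pi} X\xrightarrow{u}\overline X$ to produce $\overline{X'}\to\overline X$ directly. Second, your plan to secure condition~(4) by shrinking $\bX$ at the end is dangerous: shrinking $\bX$ enlarges $\ovbX\setminus\bX$, which can destroy condition~(5), and you would then have to rebuild the compactification and the sections $d,d'$. The paper handles~(4) cleanly at the very start: it shrinks $X$ and $X'$ (before any other construction) so that $\omega_X$ and $\omega_{X'}$ are trivial; since $S$ is local essentially smooth and $v,v'$ are pro-open immersions, the relative canonical classes $\omega_{\bX/S}$ and $\omega_{\bXp/S}$ are then automatically trivial, and nothing later disturbs this.
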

\begin{proof}
Firstly let's replace $X$, $X^\prime$ and $Z$, $Z^\prime$ to some Zariski neighbourhoods of points $z$ and $z^\prime$,
in such way that canonical classes of new $X$ and $X^\prime$ are trivial.

Let $e\colon X\to \aff^d$ ($d=dim\,X$) be any etale morphism (which exists since $X$ is smooth).
Then using proposition \ref{prop:ZarMianTh} 
we find a decomposition of the morphism $e$ into 
$X\xrightarrow{u} \overline{X}\xrightarrow{p} \aff^d$
such that $u$ is dense open immersion and $p$ is finite.
Again applying proposition \ref{prop:ZarMianTh} 
we find a decomposition of the morphism $u\circ \pi$ into
$\pri X\xrightarrow{\pri u} \overline{\pri X}\xrightarrow{\overline{\pi}} \overline{X}$, where
$\pri u$ is dense open immersion and $\overline{\pi}$ is finite.

Let's denote $B=\overline X \setminus X$, $B^\prime=\ovXp \setminus \pri X$,
$B_1 = \overline Z\cap B$, $B_1^\prime=\overline{Z^\prime}\cap B^\prime$,
where $\overline Z$ denotes the closure of $Z$ in $\overline X$ and similarly for $Z^\prime$.
Then 
since $X$ and $X^\prime$ are dense in $\overline X$ and $\overline{X^\prime}$, 
it follows that $dim\,B, dim\,B^\prime<d$
and $dim\,B_1,dim\,B_1^\prime\leq dim\,Z-1\leq d-2$.
Now 
applying lemma \ref{lm:relCur:comp:projection}
to the closed subsets
$P=p(\pi(B^\prime)\cup B \cup \overline Z\cup \pi(\overline{Z}^\prime)),\;
P_1=p(B_1\cup B_2)$
and the point $p(z)\in \aff^d$,
we get 
a projection $pr\colon \aff^d\to \aff^{d-1}$, 
such that restriction to $P\to \aff^{d-1}$ is finite and $pr^{-1}(pr(e(z))\cap P_1=\emptyset$.

Now consider 
base changes of $e$ and $\pi$ over local scheme $S=\aff^d_s$, $s = pr(p(z))\in \aff^{d-1}$.
We get etale morphisms 
$e_S\colon \cal X\to \affl_S$ and $\varpi\colon \bXp\to \cal X$ that
are finite over open subscheme $V = \affl_S \setminus P_S$, where $P_S=P\times_{\aff^{d-1}} S$ is finite over $S$.
And more over $\cal Z=Z\times_{\aff^{d-1}}S$ and $\cal Z^\prime=\pri Z\times_{\aff^{d-1}}S$ are finite over $S$ 
(since $\overline Z$, $\overline Z^\prime$ are finite over $\aff^d$ 
and since $\overline Z\setminus Z$ and $\overline Z^\prime\setminus \pri Z$ don't intersect with closed fibre).

Consider immersion $\affl_S\hookrightarrow \prl_S$
and twice applying proposition \ref{prop:ZarMianTh} 
we get decomposition 
$\cal X\xrightarrow{j} \overline{\cal X}\xrightarrow{\overline{e_S}} \prl_S$
and $\bXp\xrightarrow{\pri j} \overline{\bXp}\xrightarrow{\overline{\varpi}} \overline{\cal X}$.
$$\xymatrix{
& \affl_S\ar@{^(->}[d] &\cal X \ar[l]^{e_S}\ar@{^(->}[d]^{j}& \bXp\ar@{^(->}[d]^{j^\prime}\ar[l]^{\varpi}\\
S & \prl_S\ar[l] &\overline{\cal X}\ar[l]^{\overline{e_S}} & \overline{\bXp} \ar[l]^{\overline\varpi}
}$$

Then $\cal X\times_{\affl_S} V$ is dense in $\overline{\cal X}\times_{\affl_S} V$,
and in the same time it is finite over $V$; hence $\cal X\times_{\affl_S} V=\overline{\cal X}\times_{\affl_S} V$.
Whence 
$\overline{\cal X} \setminus \cal X\subset \overline{\cal X}\times_{\affl_S} (\prl_S\setminus V) = \overline{\cal X}\times_{\affl_S} (\infty_S\cup P_S),$
and consequently $\overline{\cal X} \setminus \cal X$ is finite over $S$.
Similarly $\overline{\bXp}\setminus \bXp$ is finite over $S$.

Thus we construct diagram \eqref{eq:reCurves:diag} satisfying the required properties, and
it is enough to find very a ample sheave $\cal O(1)$ and sections required in definition \ref{def:compact}. 

Since $\ovtX$ is projective over the local scheme $S$,
there is 
an ample sheave $\cal O(1)$ on $\ovtX$.
Then since $\ovvarpi$ is finite,
$\ovvarpi^*(\cal O(1))$
is ample sheave on $\overline{X^\prime}$.
Then for some $l$ 
sheaves
$\cal O(l)$ and $\ovvarpi^*(\cal O(l))$
are very ample, 
and let's replace $\cal O(1)$ by $\cal O(l)$.


Now 
by lemma \ref{lm:corSerreth} for some integer $l_0$ there is
a section $r\in \Gamma(\ovbX, \cal O(l_0))$, $Z(r)\cap (\ovbX-\cal X) = \emptyset$.
Again by lemma \ref{lm:corSerreth} 
there is an integer $L$ such that for all $l>L$ 
there is a section 
$d\in \Gamma(\ovbX, \cal O(l_1))$ 
such that 
$d\big|_{\ovbX-\cal X}=0$, $d(z)\neq 0$, $d\big|_{Z(r)}$ is invertible.
Then on the one side $Z(d)\subset \ovbX-Z(r)$, and hence it is affine. And on the other side $Z(d)$ is a closed subscheme in the projective scheme $\ovtX$ over $S$,
and hence $Z(d)$ is projective over $S$.
Thus $Z(d)$ is finite over $S$.

Next similarly 
there is an integer $L^\prime$
such that for all $l>L^\prime$
there is a section
$d^\prime \in \Gamma(\ovbX, \cal O(l_1))$ such that 
$d^\prime\big|_{\ovbXp-\bXp}=0$, $d^\prime(z)\neq 0$, $Z(d^\prime)$ is finite over $S$.
Finally replacing $\mathcal O(1)$ to $\mathcal O(l)$ for any $l>L,L^\prime$ and replacing $\cal {X}$ to $\cal {X}-Z(d)$, and 
$\bXp$ by $\bXp-Z(d^\prime)$, we get the claim.
%


\end{proof}

\section{Constructions of functions on relative curves}\label{sect:FunctionsConstr}

In this section we present the constructions of relative curves with trivialisation of the relative canonical class ans a regular functions on the curves, which are used in the proofs of excision and injectivity theorems.

\begin{lemma}\label{lm:sect_inj,nonmut}
Suppose $\pi\colon \pri X\to X$ is a finite morphism of projective curves over an infinite field $k$,
and $\mathcal O(1)$ is a vary ample line bundle on $\pri X$;
suppose 
$z_i\in \pri X$ is a closed point, 
$Y\subset \pri X$ is a closed subscheme,
$z\not\in Y$, $\pri X-Y$ is smooth;
then 
for any invertible section $s_Y\in \Gamma(Y,\mathcal O(1))$
there is an integer $L$ such that for all $l>L$
there is a global section $s\in \Gamma(\pri X,\mathcal O(l))$
such that \begin{itemize}[leftmargin = 15pt]
\item[1)]
$s(z)=0$ and $s\big|_{Y}=s_Y^{l}$, 
\item[2)]
$\divisor s$ hasn't multiple points (or equivalently $Z(s)$ is reduced),
\item[3)] 
$\pi$ induces the closed injection $Z(s)\to X$.
\end{itemize}

\end{lemma}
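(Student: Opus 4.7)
The plan is to produce the desired section by combining Serre vanishing (Lemma~\ref{lm:corSerreth}) with a Bertini-style dimension count over the infinite field $k$. First apply the lemma to the closed subscheme $Y \cup \{z\} \subset \pri X$: there is an integer $L_0$ such that for every $l > L_0$ the restriction $\Gamma(\pri X, \mathcal{O}(l)) \to \Gamma(Y \cup \{z\}, \mathcal{O}(l))$ is surjective. Pick any $s_0$ with $s_0|_Y = s_Y^l$ and $s_0(z)=0$, and let $V_l$ denote the kernel of that restriction; the sections satisfying~(1) form the affine space $s_0 + V_l$. Applying Lemma~\ref{lm:corSerreth} further to augmented subschemes of the form $Y \cup \{z\} \cup 2p$ (a first infinitesimal neighbourhood of a closed point $p$) and $Y \cup \{z\} \cup \{p_1, p_2\}$ shows that for $l$ sufficiently large, $V_l$ surjects onto arbitrary $1$- or $2$-point jets supported away from $Y \cup \{z\}$.

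Condition~(2) then follows by a Bertini argument. The incidence variety $I_2 = \{(s,p) \in (s_0+V_l) \times (\pri X - Y) : \mathrm{ord}_p s \ge 2\}$ has fibres over $p$ of codimension $\ge 2$ in $s_0+V_l$, while $\dim(\pri X - Y) = 1$; hence its projection to $s_0+V_l$ is contained in a proper closed subvariety. At $z$ itself, the freedom in $V_l$ at the length-two subscheme $2z$ secures $\mathrm{ord}_z s = 1$ for generic $s$. Condition~(3) is handled analogously on $W = (\pri X \times_X \pri X) \setminus \Delta_{\pri X}$, which has dimension $\le 1$ because $\pi$ is finite between curves: the locus $I_3 = \{(s,(p_1,p_2)) \in (s_0+V_l)\times W : s(p_1)=s(p_2)=0\}$ projects to a proper closed subvariety of $s_0+V_l$ by the same argument (for a pair involving $z$ one of the two vanishing conditions is automatic, but the pairs in question form only a finite set, which still contributes a proper closed exclusion). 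Since $k$ is infinite, a $k$-point of $s_0+V_l$ outside the finite union of these proper closed subvarieties supplies the required $s$.

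The main obstacle is synchronising the various Serre-vanishing applications so that a single $L$ works uniformly for every $l > L$ — effectively a common Castelnuovo--Mumford-type bound depending on $\mathcal{O}(1)$, $\pi$, $Y$, and $z$. A secondary subtlety is the interpretation of \emph{closed injection} in~(3): if this is meant as a scheme-theoretic closed immersion (so $k(p) = k(\pi(p))$ at each $p \in Z(s)$ is also required), a further codimension count is needed showing that sections whose zero cycle meets the ``residue-field extension locus'' of $\pi$ form a proper closed subvariety of $s_0+V_l$; this follows from the generic \'etaleness of $\pi$ off a finite ramification divisor together with a direct fibrewise analysis of how the Galois action restricts possible zero configurations.
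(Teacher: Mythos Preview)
Your approach is essentially the same as the paper's: set up the affine space of sections satisfying (1) via Serre vanishing, then run a Bertini-type incidence count for (2) and (3) and use that $k$ is infinite. The paper additionally fixes an auxiliary section $d\in\Gamma(\pri X,\mathcal O(1))$ invertible on $z\cup Y$ and imposes $s|_{Z(d)}$ as well, which is a convenience rather than a new idea; it also reduces the dimension count to the algebraically closed case before computing fibres.

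On the obstacle you flag (a single $L$ working for all $l>L$): the paper handles this exactly in the spirit you suggest. It observes that for each fixed $l$ the surjectivity of
\[
\Gamma(\pri X,\mathcal O(l))\longrightarrow \Gamma\bigl(S(x_1+x_2+z+Y+\divisor d),\mathcal O(l)\bigr)
\]
is an open condition on the pair $(x_1,x_2)$, while Lemma~\ref{lm:corSerreth} gives a threshold $L_{(x_1,x_2)}$ pointwise; combining these with quasi-compactness of the parameter space yields the uniform $L$. This is precisely the Castelnuovo--Mumford-type bound you anticipated.

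On your secondary worry about ``closed injection'': the paper's bad locus $B_2$ is defined as the support of $\Coker\bigl(\pi^*\mathcal O_X\oplus\mathcal O_{\pri X}\to \mathcal O(l)\bigr)$, so the intended meaning is indeed that $\mathcal O_X\to\pi_*\mathcal O_{Z(s)}$ is surjective, i.e.\ $Z(s)\to X$ is a closed immersion of schemes. The fibre over a point of $X$ where this fails still has codimension $\geq 2$ in $\Gamma$ by the same two-point jet computation, so no separate ramification analysis is needed.
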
 
\begin{proof}
In short the claim follows from 
that for a big $l$
the first condition on $s$ defines some non-empty affine subspace $\Gamma$
in the affine space of global sections of $\mathcal O(l)$ and $s(z)$,
and the second and third conditions defines non-empty open subsets in $\Gamma$.
and for big $l$ this conditions define subscheme of codimension at least 1.

Indeed, 
let's choose some section $d\in \mathcal O(1)$ 
that is invertible on $z\cup Y$, 
fix some lift of $s_Y$ to an invertible section $\overline{s}$ of $\mathcal L(n)$ on $Y\amalg Z(d)$
and consider for each $l$ the affine subspace 
$$\Gamma=\{s\in \Gamma(\mathcal X^\prime,\mathcal O(l))\colon s\big|_{Y\amalg Z(d)}=\overline s^l, s(z)=0\}.$$ 

Let $s_\Gamma\in \Gamma(X^\prime\times\Gamma,\mathcal O(l))$ be the universal section.
Consider the closed subschemes 
\begin{gather*}
B_1= Z(s_\Gamma) \cap Z(d\,(s_\Gamma/d^l))\subset \pri X\times\Gamma,
\end{gather*}
where $d\,(s_\Gamma/d^l)$ denotes 
differential of regular function $s_\Gamma/d^l$ on $\pri X-Y$ that is section of $\omega(\pri X)$
(image is closed since $B_1\subset Z(s_\Gamma)$ that is finite over $\Gamma$),
and 
$$B_2=\Supp 
\Coker[ \pi^*(\mathcal O(X))\oplus \mathcal O(\pri X) \xrightarrow{(\varepsilon_{\pi}, s_\Gamma)} \pi\mathcal O(l) ],$$ 
where 
$p\colon Z(s_\Gamma)\to X\times\Gamma$ be composition of injection into $\pri X\times\Gamma$ and $\pi\times id_\Gamma$
and
$\varepsilon_{p}$ denotes unit of the adjuniction $(p^*,p_*)$
(image is closed since $B_2\subset (\pi\times id_{\Gamma})_*(Z(s_\Gamma))$ is finite over $\Gamma$).

Then the subset of such sections $s$ that the zero divisor $Z(s)$ has multiple points 
is contained in the image of $B_1$
under the projection 
$\pri{pr}\colon \pri X\times\Gamma\to\Gamma$. 
And subset of such sections $s$ that $\pi\big|_{Z(s)}$ isn't injection,
is contained in image of $B_2$ under the projection 
$pr\colon  X\times\Gamma\to\Gamma$.
So to prove the claim it is enough to find at least one rational point in $\Gamma- (\pri{pr}(B_1)\cup pr(B_2))$.

Since base filed $k$ is infinite, to get the claim it is enough to show that 
dimensions of $\pri{pr}(B_1)$ and $pr(B_2)$ 
are less then $\dim \Gamma$,
and since base changes along filed extensions doesn't change dimension,
it is enough to consider the case of algebraically closed filed $k$. 

Let's note that for $l$ larger some $L$
for each pair of points $x_1,x_2\in X^\prime$ 
the restriction homomorphism
$$
r_{x_1,x_2,l}\colon 
\Gamma(\pri X,  \mathcal O(l) )\to \Gamma(S(x_1+x_2+z+Y+\divisor d),\mathcal O(l))
$$
is surjective.
Indeed, for each $l$ 
surjectivity of  $r_{x_1,x_2,l}$ 
is open condition on the pair $(x_1,x_2)$, 
and lemma \ref{lm:corSerreth} implies that for each pair of points $(x_1,x_2)$ there is some $L_{(x_1,x_2)}$ such that for all $l>L_{(x_1,x_2)}$, $r_{(x_1,x_2,l)}$ is surjective. 

Then 
for $l>L$,
$$
\Gamma\twoheadrightarrow 
\{s\in\Gamma(S(x_1+x_2+z+Y+\divisor d),\mathcal O(l))\colon\,s\big|_{Y\amalg Z(d)}=\overline s, s(z)=0 \}\simeq{k}^2
$$
and  for any pair $x_1,x_2\in X^\prime-(Y\cup Z(d))$
\begin{equation*}
\codim_\Gamma\{s\in \Gamma|\,\divisor  s\geqslant x_1+x_2\}=
\begin{cases}2 \text{, if } x_1,x_2\neq z, \\1 \text{, otherwise.}\end{cases}
\end{equation*}
Hence 
for almost all points $x\in \pri X$, the dimension of the fibre of $B_1$ over $x$ is at least 2, 
and 
for all $x\in X$, the dimension of the fibre of $B_2$ over $x$ is at lest 2.
Thus 
$$
\codim_{\Gamma}\pri{pr}_{k}(B_1 )\geq 2-dim\,X=1,\quad
\codim_{\Gamma} pr_{k}(B_2)\geq 2-dim\,X^\prime>1.
$$


\end{proof}

\begin{lemma}\label{lm:et-ex:Zsect}

Suppose
$\varpi\colon (\pri{\cal X},\pri{\cal Z} )\to (\cal X,\cal Z)$
is a Nisnevich neigbourhood
over a local base scheme $S$ over an infinite field $k$ (see def. \ref{def:nisneigh})
such that $\cal Z$ is finite over $S$,
and $\overline{\varpi}\colon \ovbXp\to \ovbX$ is a good relative compactification (see def. \ref{def:compact});
suppose $z\in \pri X$ is closed point and
$\Delta$ is primitive divisor in $X$ finite over $S$,
such that closed fibre of $\Delta$ is $z$;
then there is an integer $L$ such that for any $l>L$ there is a
section $s\in \Gamma(\cal O(l))$ such that\begin{itemize}[leftmargin=15pt]
\item[1)] $Z(s\big|_{ \ovvarpi^{-1}( {\cal Z^\prime} ) } )\simeq \pi\big|_{\cal Z^\prime}^{-1} ( i_{\cal Z}^*(\Delta) )$,
where $i_{\cal Z}\colon \cal Z\to X$ denotes the closed injection 
(so the term $\pi\big|_{\cal Z^\prime}^{-1} ( i_{\cal Z}^*(\Delta) )$ is equal to $\Delta\cap \cal Z$ considered as subscheme in $\pri{\cal X}$ using the isomorphism $Z^\prime\simeq Z$.)
\item[2)] $Z(s)\subset \bXp$, $Z(s)$ is reduced and irreducible components $Z(s)$ don't intersect to each other,
\item[3)] $Z(s)\to X$ is closed injection.
\end{itemize}
\end{lemma}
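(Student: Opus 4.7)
My plan is to follow the strategy of Lemma~\ref{lm:sect_inj,nonmut} almost verbatim: prescribe the values of $s$ on a sufficiently large finite closed subscheme of $\ovbXp$, producing an affine subspace $\Gamma$ of global sections in which condition~(1) holds tautologically and $Z(s)\subset \bXp$ is automatic; then show that conditions~(2) and~(3) cut out non-empty Zariski opens in $\Gamma$ by a dimension count, and extract a $k$-rational point using the infiniteness of $k$.

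\textbf{Setting up $\Gamma$.} The subscheme $\ovvarpi^{-1}(\cal Z)\subset \ovbXp$ is finite over $S$ (as $\cal Z$ is and $\ovvarpi$ is finite), and the boundary $\ovbXp\setminus \bXp$ is finite over $S$ by the good compactification. Put $T = \ovvarpi^{-1}(\cal Z)\cup(\ovbXp\setminus\bXp)$ and prescribe a section $\overline{s}_T$ of $\ovvarpi^*\cal O(1)|_T$ that is invertible on $\ovbXp\setminus \bXp$ (using the section $\pri d$ from the compactification, which forces $Z(s)\subset\bXp$), that vanishes on $\pi|_{\cal Z^\prime}^{-1}(i_{\cal Z}^*\Delta)\subset \cal Z^\prime \subset \ovvarpi^{-1}(\cal Z)$, and is invertible on all remaining components of $\ovvarpi^{-1}(\cal Z)$. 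Since $\ovvarpi^*\cal O(1)$ is very ample, Lemma~\ref{lm:corSerreth} gives an $L$ such that for $l>L$ the restriction $\Gamma(\ovbXp,\ovvarpi^*\cal O(l))\twoheadrightarrow\Gamma(T,\ovvarpi^*\cal O(l)|_T)$ is surjective. Let $\Gamma=\{s : s|_T=\overline{s}_T^{\,l}\}$; this is a non-empty affine subspace and any $s\in\Gamma$ automatically satisfies condition~(1) and has $Z(s)\subset\bXp$.

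\textbf{Bad loci and dimension count.} Let $s_\Gamma$ be the universal section on $\ovbXp\times\Gamma$. Define $B_1\subset\ovbXp\times\Gamma$ as the locus of multiple zeros of $s_\Gamma$ (intersection of $Z(s_\Gamma)$ with the vanishing of the differential of $s_\Gamma/\delta^l$ for a nowhere-zero section $\delta$ near $T$), and $B_2\subset \ovbX\times\Gamma$, exactly as in Lemma~\ref{lm:sect_inj,nonmut}, as the support of the cokernel of the map $(\varepsilon_p,s_\Gamma)$ measuring the failure of $\ovvarpi|_{Z(s_\Gamma)}$ to be a closed injection (composing with the pro-open immersion $v\colon \cal X\to X$ preserves injectivity, so condition~(3) is equivalent to injectivity of $\ovvarpi|_{Z(s)}$). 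Both $B_1$ and $B_2$ project finitely to $\Gamma$. For $l$ large, Lemma~\ref{lm:corSerreth} applied to $T\cup\{x_1,x_2\}$ shows that imposing the two extra vanishing conditions at $x_1,x_2\in\ovbXp\setminus T$ cuts codimension $2$ in $\Gamma$, so the projections of $B_1$ and $B_2$ to $\Gamma$ are of codimension at least $1$. Since $k$ is infinite, the complement in $\Gamma$ contains a $k$-rational point, yielding the required section $s$.

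\textbf{Main obstacle.} The delicate step is arranging condition~(3) compatibly with the prescribed data on $T$, since $\ovvarpi^{-1}(\cal Z)$ may contain components outside $\cal Z^\prime$ (the Nisnevich condition gives only $\cal Z^\prime\simeq\cal Z$, not a full preimage identification). One must ensure $\overline{s}_T$ is chosen invertible on those extra components, so that all forced zeros of $s$ lie inside $\cal Z^\prime$, where $\varpi$ is an isomorphism and no collision under $\ovvarpi$ occurs; only the remaining ``free'' zeros then enter the generic dimension count for $B_2$, which reproduces the argument of Lemma~\ref{lm:sect_inj,nonmut}.
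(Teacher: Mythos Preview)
Your overall plan is right, but there is a genuine gap in the dimension count and the final ``rational point'' step, and it is precisely the step the paper's proof is designed to handle.

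The affine subspace $\Gamma$ you construct is an affine space over the local ring $R=k[S]$, not over $k$, and what you need is an $S$-section (an element of the $R$-module), not a $k$-rational point. More seriously, the dimension count you invoke from Lemma~\ref{lm:sect_inj,nonmut} is a count for a projective curve over a \emph{field}: there one uses $\codim_\Gamma pr(B_i)\ge 2-\dim(\text{curve})=1$. In your relative situation $\ovbXp$ has dimension $1+\dim S$ over $k$, so the same count gives $\codim_\Gamma pr(B_i)\ge 1-\dim S$, which is useless once $\dim S>0$. Thus the sentence ``Since $k$ is infinite, the complement in $\Gamma$ contains a $k$-rational point'' is not justified.

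The paper avoids this by passing to the \emph{closed fibre} $\ovXp$ of $\ovbXp\to S$, which is a projective curve over the (infinite) residue field of $S$. There Lemma~\ref{lm:sect_inj,nonmut} applies verbatim and produces a section $s_{cf}$ on $\ovXp$ with the reduced, pairwise disjoint, and injective-image properties. One then lifts $s_{cf}$ (together with the prescribed $\delta'$ on $\cal Z'$) to a global section $s$ on $\ovbXp$ via Lemma~\ref{lm:corSerreth}, and uses Nakayama's lemma to propagate conditions~(2) and~(3) from the closed fibre to all of $\ovbXp$: emptiness of an intersection on the closed fibre forces emptiness globally over a local base, which gives both the absence of multiple or intersecting components in $Z(s)$ and the injectivity of $\varpi\big|_{Z(s)}$. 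This closed-fibre-then-Nakayama step is the missing idea in your argument; with it, your setup of $\Gamma$ and your identification of condition~(1) as automatic are fine.
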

\begin{proof}

%

Since $\cal Z$ is finite over the local scheme $S$, it is semi-local. Hence any line bundle on $\cal Z$ is trivial.
Since $\varpi$ induce the isomorphism $\cal Z^\prime\simeq \cal Z$,
for each integer $l$ there is some isomorphism
\begin{equation}\label{eq:et-ex:lmZsect:identDelta}
\cal O(l)\big|_{ \pri{\cal  Z} } \simeq 
\varpi^*( \cal L(\Delta)\big|_{ \cal Z } )
.\end{equation}
Let $\delta\in \Gamma(\cal Z,\cal L(\Delta))$ be a section such that $div\,\delta=\Delta$, and since $\Delta$ is primitive then $Z(\delta)=\Delta$.
Then using isomorphism \eqref{eq:et-ex:lmZsect:identDelta} for each $l$ we get some section
$$\delta^\prime\in \Gamma(\cal Z^\prime,\cal O(l))\colon 
Z(\delta^\prime)\simeq \pi\big|_{\cal Z^\prime}^{-1} ( i_{\cal Z}^*(\Delta) ).$$

Let $\pi\colon (X^\prime,Z^\prime)\to (X,Z)$ and $\ovpi\colon \ovXp\to \ovX$
denotes the fibre of $\varpi$ over the closed point of $S$.
Let $z^\prime$ denotes preimage of $z$ in $Z^\prime$.
Using lemma \ref{lm:sect_inj,nonmut} for all $l$ bigger some $L$ we find a section $s_{cf}\in \Gamma(\ovXp,\cal O(l))$ such that 
\begin{itemize}
\item[1)]
$s_{cf}\big|_{   Z^\prime  } = \delta^\prime\big|_{Z^\prime}$, $s_{cf}\big|_{\ovpi^{-1}(Z)-Z^\prime}$ is invertible,
\item[2)]
$Z(s_{cf} ) \subset X^\prime$, $div\,s_{sf}$ hasn't multiple points (or equivalently $Z(s_{cf})$ is reduced),
\item[3)] 
$\pi$ induce closed injection $Z(s_{cf})\to X$.
\end{itemize}
(To apply lemma \ref{lm:sect_inj,nonmut} we set
$z=z^\prime$, $Y=(Z^\prime - z^\prime) \cup (\ovXp\setminus \pri X)$, and 
$s_Y$ is any lift of $\delta^\prime$ to invertible section of $Y$.)

Let $s$ be any lift of $s_{cf}$ and $\delta^\prime$ to a global section on $\ovbX$,
that exists by lemma \ref{lm:corSerreth}, since $s_{cf}$ and $\delta^\prime$ are agreed on $Z^\prime$.

Since $s_{cf}\big|_{\ovpi^{-1}(Z)-Z^\prime}$ is invertible, it follows that $s\big|_{\ovvarpi^{-1}(\cal Z)-\bZp}$ is invertible too.
Hence $Z( s\big|_{\varpi^{-1}(\cal Z)} )= Z( s\big|_{\bZp} ) = Z(\delta^\prime)=
\pi\big|_{\cal Z^\prime}^{-1} ( i_{\cal Z}^*(\Delta) )$,
and so the first claim of the lemma on $s$ holds.

By Nakayama's lemma
if the closed fibre of the intersection of two subschemes of a scheme over a local base is empty, then the intersection if these schemes is empty at all.
So the second claim on $s$ follows form that $div\,s_{cf}$ hasn't multiple points.

And since by Nakayama's lemma again, injectivity of $\pi\big|_{Z(s_{cf})}\colon Z(s_{cf})\to X$ implies that 
$\varpi$ induce injection $Z(s)\to \cal X$, 
it follows that
the claim of the third point of lemma holds. 

\end{proof}

\begin{lemma}\label{lm:et-ex:inj:FinFuncSup}
Let 
$\pi\colon \bXp\to \cal X$ be a morphism 
of relative projective curves over a local base $U$,
with a good compactification $(\ovpi\colon \ovbXp\to \ovbX,\cal O(1))$
(see def. \ref{def:compact}),
let
$\cal Z\subset\bX$, $\cal Z^\prime\subset\bXp$ be closed subschemes, and
$\Delta\subset \cal X$ be a primitive divisor 
such that closed fibre of $\Delta$ is a point $z\in \cal X$
and \begin{equation}\label{eq:lm:et-ex:i:FF:Delta} Z\times_{\bX}\Delta\times_U (U-Z)=\emptyset.\end{equation}
Then there are 
regular functions $f_0,f_1\in k[\cal X]$ and $f\in k[\cal X\times\affl]$
and a regular map $l\colon Z(f_0)\to \bXp$ such that \begin{itemize}[leftmargin=15pt]
\item[1)]
$f_0,f_1$ and $f$ are relatively finite over $U$ and $U\times\affl$ (see def. \ref{def:OrCurFinFun}),
\item[2)]
$i_0^*(f)=f_0, i_1^*(f)=f_1$
where $i_0,i_1\colon U\to U\times\affl$ denotes zero and unit sections,
\item[3)]
$Z \times_X Z(f)\times_{U} (U-Z_z)=\emptyset$,
\item[4)]
$Z(f_1)=\Delta\amalg (Z(f_1)-\Delta),\, Z\times_{X} (Z(f_1)-\Delta)=\emptyset$
\item[5)]
$l$ is a lift of the canonical injection $i_{Z(f_0)}\colon Z(f_0)\hookrightarrow \bX$, i.e.
$\varpi \circ l=i_{Z(f_0)},$
\item[6)]
$Z^\prime \times_{X^\prime} l(Z(f_0))\times_U (U-Z_z)=\emptyset$.
\end{itemize}
\end{lemma}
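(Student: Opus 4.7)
The strategy is to realize $f_0, f_1, f$ as ratios of sections of the ample line bundle $\cal O(l)$ on $\ovbX$ (from the good compactification) by $d^l$, where $d$ is the section with $Z(d) = \ovbX \setminus \bX$. Specifically, choose $\sigma_0, \sigma_1 \in \Gamma(\ovbX, \cal O(l))$ suitably and set $f_i := \sigma_i/d^l$, $f := \sigma_t/d^l$ for $\sigma_t := (1-t)\sigma_0 + t\sigma_1$. Since $d$ is invertible on $\bX$, these are regular on $\bX$ (resp.\ $\bX \times \affl$), and their relative finiteness over $U$ (resp.\ $U \times \affl$) will follow from Lemma~\ref{lm:sect-finfunct} applied to $\ovbX \to U$ (resp.\ to $\ovbX \times \prl \to U$ with the Segre twist $\cal O(l) \boxtimes \cal O_\prl(1)$, once we note that $\sigma_t|_{Z(d)}$ is invertible for all $t$).

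\textbf{Construction of the sections.} First apply Lemma~\ref{lm:et-ex:Zsect} to $\varpi$ with the primitive divisor $\Delta$, obtaining $s' \in \Gamma(\ovbXp, \ovvarpi^*\cal O(l))$ with $Z(s') \subset \bXp$ reduced, $\varpi|_{Z(s')}\colon Z(s') \hookrightarrow \bX$ a closed injection, and $s'|_{\ovvarpi^{-1}(\cal Z)}$ vanishing on the preimage of $\Delta \cap \cal Z$ under $\cal Z' \simeq \cal Z$. Set $\tilde Z := \varpi(Z(s'))$; then $(\varpi|_{Z(s')})^{-1}$ composed with the inclusion $Z(s') \hookrightarrow \bXp$ provides the candidate lift $\ell_0 \colon \tilde Z \to \bXp$ of $i_{\tilde Z}$. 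Now apply Lemma~\ref{lm:corSerreth} to the finite disjoint union $T := Z(d) \sqcup \tilde Z \sqcup \Delta \sqcup \cal Z$: for $l$ large, choose $\sigma_0, \sigma_1 \in \Gamma(\ovbX, \cal O(l))$ such that both match a fixed invertible trivialization $\tau_d$ of $\cal O(l)|_{Z(d)}$, both match a section $\tau_{\cal Z}$ of $\cal O(l)|_{\cal Z}$ that vanishes exactly on $\cal Z \cap (\Delta \cup \tilde Z)$ (supported in the closed fiber by the hypothesis $Z \times_{\bX} \Delta \times_U (U - Z_z) = \emptyset$) and is invertible elsewhere, while $\sigma_0|_{\tilde Z} = 0$ and $\sigma_1|_\Delta = 0$. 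A dimension count in the style of Lemma~\ref{lm:sect_inj,nonmut} then arranges $Z(\sigma_i) \cap \bX$ to be reduced with any additional components avoiding $T$.

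\textbf{Verification and main obstacle.} Conditions 2 and 4 are built into the construction. For condition 3, at any $p \in \cal Z$ off the closed fiber, $\sigma_0(p) = \sigma_1(p) = \tau_{\cal Z}(p) \neq 0$, so $\sigma_t(p) = \tau_{\cal Z}(p) \neq 0$ for every $t \in \affl$. Condition 1 is Lemma~\ref{lm:sect-finfunct} (applied separately to $\sigma_0, \sigma_1, \sigma_t$, with common pole divisor $Z(d)$). Writing $Z(f_0) = \tilde Z \sqcup \tilde W$ for the generic extra zeros $\tilde W$ of $\sigma_0$, the lift $\ell_0$ extends to a lift $\ell \colon Z(f_0) \to \bXp$ by choosing étale-local sections of $\varpi$ over each component of $\tilde W$; since $\tilde W$ avoids $\cal Z$ by the genericity, condition 6 follows. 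The main technical obstacle is this extension of $\ell$ to $\tilde W$: globalization of the local sections of $\varpi$ requires care about the étale structure over the generic part of $\bX$. A cleaner alternative is to take $\sigma_0 := N_{\ovvarpi}(s')$ (the norm of $s'$), which forces $Z(\sigma_0) = \tilde Z$ exactly on $\bX$ and makes $\ell = \ell_0$ automatic, at the cost of tracking the twist $\det \ovvarpi_* \cal O_{\ovbXp}$ on the underlying line bundle.
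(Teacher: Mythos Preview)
Your overall strategy---produce sections $\sigma_0,\sigma_1$ of $\cal O(l)$ on $\ovbX$, set $f_i=\sigma_i/d^l$ and interpolate---is exactly the paper's approach, and your ``cleaner alternative'' at the end is precisely what the paper actually does. The paper takes $s_0$ to be the section of $\cal O(ln)$ (with $n=\deg\ovvarpi$) whose divisor is the pushforward $\varpi_*(\operatorname{div} s')$; since $\varpi|_{Z(s')}$ is a closed injection and $Z(s')$ is reduced, this forces $Z(s_0)=\varpi(Z(s'))=\tilde Z$ on the nose, so the lift $l$ is automatically $(\varpi|_{Z(s')})^{-1}$ and there are no stray components $\tilde W$ to worry about. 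Your concern about the twist $\det\ovvarpi_*\cal O_{\ovbXp}$ evaporates when you work at the level of divisors rather than norms of sections: by the projection formula on Picard groups, $\varpi_*(\operatorname{div} s')$ lies in the linear system of $\cal O(ln)$ directly, so $s_0\in\Gamma(\ovbX,\cal O(ln))$ with no extra factor. After that the paper chooses $s_1$ by lifting along $D\cup\cal Z\cup\Delta$ (your $T$ without $\tilde Z$), exactly as you propose.

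Your \emph{first} construction, by contrast, has a genuine gap where you flag it. Prescribing $\sigma_0|_{\tilde Z}=0$ via Lemma~\ref{lm:corSerreth} gives $Z(\sigma_0)=\tilde Z\sqcup\tilde W$ with $\tilde W$ an uncontrolled residual divisor, and there is no mechanism to produce a section of the \'etale map $\varpi$ over an arbitrary closed subscheme $\tilde W\subset\cal X$: \'etale maps have sections Nisnevich-locally, not over general finite-over-$U$ curves. So the extension of $\ell$ to $\tilde W$ need not exist. Drop that variant and commit to the divisor-pushforward (equivalently, norm) construction; then your argument coincides with the paper's.
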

\begin{proof}
Denote $D = \ovbX\setminus \cal X$, $D^\prime = \ovbXp\setminus \bXp$,
and
denote 
$\Delta^\prime = \varpi\big|_{\bZp}^{-1}( i_{\cal Z}^{-1}(\Delta) )$, 
where $i_{\cal Z}\colon \cal Z\to \cal X$ denotes the canonical injection
(so $\Delta^\prime \subset \bXp$ is the image of $\Delta\cap \cal Z$ after the identification $\bZp\simeq \cal Z$).


Applying lemma \ref{lm:et-ex:Zsect}
we find $L_0$ such that for any $l>L_0$ there is 
a global section 
$\pri s\in\Gamma(\ovbXp,\cal O(l))$ 
such that
the restriction $\varpi\big|_{Z(\pri s)}\colon Z(\pri s)\to \cal{X}^\prime$ is injective, $Z(\pri s\big|_{\cal Z})=\Delta^\prime$,
and such that $Z(\pri s)$ is reduced and it is the disjoint union of irreducible components. 

Hence $Z(\pri s)$ is disjoint union of irreducible closed subschemes in $\ovbXp$ of codimension one.
The direct image effective divisor $div\,\pri s$ along $\varpi$
is in the linear system of the line bundle $\cal O(l n)$, where $n= deg\,\ovvarpi$.
Then there is a global section
$$s_0 \in\Gamma(\ovbX,\cal O(l n)), div\,s_0=\varpi_*(div\,s)$$,
and then $Z(s_0)$ is equal to the image of $Z(\pri s)$ under the injection defined by $\varpi$.
Moreover since $Z(\pri s\big|_{ \ovvarpi^{-1}(\cal Z) })$ is equal to $i_{\cal Z}^{-1}(\Delta)$,
it follows that $Z(s_0\big|){\cal Z})=i_{\cal Z}^{-1}(\Delta)$.

Thus for each $l>L_0$ we have find 
\begin{equation}\label{eq:lm:et-ex:i:FF:lift}
Z_0\subset \cal X, l\colon Z_0\to \bXp, 
s_0 \in\Gamma(\ovbX,\cal O(l n))\colon 
Z(s)=Z_0,
\cal Z\cap Z_0=\cal Z\cap\Delta,
\pi\circ l=i_{\cal Z}.\end{equation}
Next 
by lemma \ref{lm:corSerreth} 
there is $L_1$ such that for all $l>L_1$ 
we can find a section 
$$s_1\in\Gamma(\ovbX,\cal O(l n))\colon s_1\big|_{D\cup \cal Z}=s_0\big|_{D\cup \cal Z}, s_1\big|_{\Delta}=0.$$
Define 
\begin{gather*}
s=t s_1+(1-t) s_0\in\Gamma(\ovbX,\cal O(l n)),\\
f_0=s_0/d^{l n}\in k[\bX] , f_1=s_1/d^{l n}\in k[\bX] , f= s/d^{l n}\in k[\bX\times\affl].
\end{gather*}

By lemma \ref{lm:sect-finfunct}
functions 
$f_0$ , $f_1$ , $f$ 
are relatively finite over $S$ (see def. \ref{def:OrCurFinFun}), so the point 1 holds. 
The point 2 follows from the definition. 

The point 5 and 6 follows from \eqref{eq:lm:et-ex:i:FF:lift} and from \eqref{eq:lm:et-ex:i:FF:Delta}.
The point 3
follows from that $s\big|_{\cal Z}=s_0\big|_{\cal Z}$ and from \eqref{eq:lm:et-ex:i:FF:lift} and \eqref{eq:lm:et-ex:i:FF:Delta} too.

To get the claim it is enough to check the point 4. 
Let $\delta\in \Gamma(\ovbX,\cal L(\Delta))\colon div\,\delta=\Delta$.
Since  by definition $s_1\big|_{\Delta}=0$, it follows that 
$s_1/\delta\in \Gamma(\ovbX,\cal L(-\Delta)(l))$ is regular section.
Then since 
$Z(s_1\big|_{\cal Z})=\cal Z\cap \Delta$,
$s_1/\delta\big|_{\cal Z}$ is invertible,
and since the closed fibre of $\Delta$ is point $z$ contained in $\cal Z$, 
the above implies that
$s_1/\delta\big|_{\Delta}$ is invertible too, and hence we get $Z(s_1/\delta)\cap (\cal Z\cup \Delta)=\emptyset$.
Thus 
$$
Z(s_1) = \Delta\amalg Z(s_1/\delta), \cal Z\cap Z(s_1/\delta)=\emptyset
.$$

\end{proof}

\begin{lemma}\label{lm:et-ex:sur:FinFuncSup}
Let $U$ be a local scheme and
$\pi\colon (\pri U,\pri Z)\to (U,Z)$ be a Nisnevich neighbourhood 
of a closed subscheme $Z\subset U$.
Let $\ovbX$ be a projective curve over $U$ with a very ample bundle $\cal O(1)$,
let $d\in \Gamma(\ovbX,\cal O(1))$, $\bX=\ovbX-Z(d)$,
and let 
$\cal Z\subset\bX$ be a closed subscheme finite over $U$. 
Let 
$\ovbXp=\ovbX\times_U \pri U$, $\bXp=X\times_U\pri U$, $\bZp=\cal Z\times_U \pri U$:
$$\xymatrix{
& \bZp\ar@{^(->}[d]\ar[r]& \cal Z\ar@{^(->}[d]\\
\pri\Delta\ar@{^(->}[r] & \ovbXp \ar[d]\ar[r]^{\phi}&\ovbX\ar[d]\\
& \pri U\ar[r]^\pi & U.
}$$
Let 
$\Delta^\prime\subset \bXp$ 
be a primitive divisor finite over $\pri U$,
and $\delta\in k[\cal Z]$ be a regular function such that
\begin{equation}\label{eq:lm:et-ex:sur:FF:condDelta}
(\bZp\cap \pri\Delta)\times_{\pri U} (\pri U-\pri Z)=\emptyset,\;
(\cal Z\cap Z(\delta))\times_{U} (U-Z)=\emptyset,\;
Z(\delta)\times_U \pri U=\pri Z\cap \pri\Delta.
.\end{equation}

Then there are 
a relatively finite (over $U$) regular function $\pri f\in k[\bX]$
a relatively finite (over $\pri U$ and $\pri U\times\affl$) and functions 
$f_0,f_1\in k[\bXp]$,
$f\in k[\bXp\times\affl]$
such that\begin{itemize}[leftmargin=15pt]
\item[1)] 
$\pi_z^*(f^\prime)=f_0,\, i_0^*(f)=f_0,\, i_1^*(f)=f_1, $
\item[2)]
$
\pri Z \times_{\pri X} Z(f)\times_{\pri U} (\pri U-\pri Z)=\emptyset
$
\item[3)]
$
Z^\prime \times_{X^\prime} Z(f^\prime) \times_{U} (U-Z)=\emptyset
$
\item[4)]
$Z(f_1)=\pri\Delta\amalg (Z(f_1)-\Delta),\, (\pri Z)\times_{\pri X} (Z(f_1)-\pri\Delta)=\emptyset
$
\end{itemize}

\end{lemma}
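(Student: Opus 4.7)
The plan is to follow the strategy of lemma \ref{lm:et-ex:inj:FinFuncSup}, but with the additional requirement that the ``initial'' function $f_0$ on $\mathcal{X}'$ descend to a function $f'$ on $\mathcal{X}$. The descent is forced by the third condition of \eqref{eq:lm:et-ex:sur:FF:condDelta}, namely $Z(\delta)\times_U U' = \mathcal{Z}'\cap\Delta'$: I will first build $f'$ downstairs from the prescribed trace $\delta$ on $\mathcal{Z}$, then pull back to obtain $f_0$; the section $f_1$ and the homotopy $f$ are then built upstairs as in lemma \ref{lm:et-ex:inj:FinFuncSup}.

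\textbf{Step 1 (build $s_0$ on $\overline{\mathcal{X}}$).} Since $\mathcal{Z}$ is finite over the local scheme $U$, it is semi-local, so every invertible sheaf on $\mathcal{Z}$ is trivial; fix for each $l$ an isomorphism $\mathcal{O}(l)|_{\mathcal{Z}}\simeq \mathcal{O}_{\mathcal{Z}}$, so that $\delta^l$ is identified with a section $\overline{\delta}_l\in\Gamma(\mathcal{Z},\mathcal{O}(l)|_{\mathcal{Z}})$ with zero locus $Z(\delta)$. By lemma \ref{lm:corSerreth} applied on the finite subscheme $\mathcal{Z}\cup Z(d)\subset\overline{\mathcal{X}}$, for $l\gg 0$ one can lift $\overline{\delta}_l$ to a global section $s_0\in\Gamma(\overline{\mathcal{X}},\mathcal{O}(l))$ that agrees with $\overline{\delta}_l$ on $\mathcal{Z}$ and is invertible on $Z(d)$. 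Set $f'=s_0/d^l\in k[\mathcal{X}]$, which is relatively finite over $U$ by lemma \ref{lm:sect-finfunct}, and set $f_0=\phi^*(f')\in k[\mathcal{X}']$, automatically relatively finite over $U'$. Its zero locus restricted to $\mathcal{Z}'$ is $Z(\delta)\times_U U' = \mathcal{Z}'\cap\Delta'$, which lies over $Z'$ by the first hypothesis of \eqref{eq:lm:et-ex:sur:FF:condDelta}; this will give condition~3.

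\textbf{Step 2 (build $s_1'$ on $\overline{\mathcal{X}}'$).} I now seek $s_1'\in\Gamma(\overline{\mathcal{X}}',\phi^*\mathcal{O}(l))$ with $s_1'|_{\mathcal{Z}'}=\phi^*(s_0)|_{\mathcal{Z}'}$, $s_1'|_{Z(\phi^*d)}=\phi^*(s_0)|_{Z(\phi^*d)}$ (so that $Z(s_1')\subset\mathcal{X}'$), and $s_1'|_{\Delta'}=0$. The two prescribed values are consistent on $\mathcal{Z}'\cap\Delta'$: via the chosen trivialisation $\phi^*(s_0)$ identifies with the pullback of $\delta^l$, and by hypothesis $\mathcal{Z}'\cap\Delta' = Z(\delta)\times_U U'$, where $\delta$ vanishes; on the disjoint pair $\Delta'\cap Z(\phi^*d)$ the conditions are likewise compatible. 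Hence lemma \ref{lm:corSerreth} provides such $s_1'$ for all $l$ beyond some threshold. Define $f_1=s_1'/\phi^*(d)^l$, relatively finite over $U'$ by lemma \ref{lm:sect-finfunct}.

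\textbf{Step 3 (homotopy and verification).} Put $s=t\,s_1'+(1-t)\phi^*(s_0)\in\Gamma(\overline{\mathcal{X}}'\times\mathbb{A}^1_t,\phi^*\mathcal{O}(l))$ and $f=s/\phi^*(d)^l$. By lemma \ref{lm:sect-finfunct}, $f$ is relatively finite over $U'\times\mathbb{A}^1$, and the identities $\phi^*(f')=f_0$, $i_0^*(f)=f_0$, $i_1^*(f)=f_1$ hold by construction, yielding conclusion~1. On $\mathcal{Z}'\times\mathbb{A}^1$ the section $s$ is independent of $t$ and equals $\phi^*(s_0)|_{\mathcal{Z}'}$, whose zero locus projects into $Z'$; this is conclusion~2. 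Conclusion~3 was already recorded in Step~1. For conclusion~4, note $\Delta'\subset Z(s_1')$ by construction, and for a section $\delta_{\Delta'}$ of $\mathcal{L}(\Delta')$ with divisor $\Delta'$ the quotient $s_1'/\delta_{\Delta'}$ is a regular section of $\phi^*\mathcal{O}(l)\otimes\mathcal{L}(-\Delta')$ which is invertible on $\mathcal{Z}'$ (since the vanishing orders of numerator and denominator along $\mathcal{Z}'\cap\Delta'$ agree by the coincidence $s_1'|_{\mathcal{Z}'}=\delta^l|_{\mathcal{Z}'}$ and $\mathcal{Z}'\cap\Delta' = Z(\delta)\times_U U'$), so $Z(s_1')-\Delta' = Z(s_1'/\delta_{\Delta'})$ is disjoint from $\mathcal{Z}'$, and in particular from $Z'$.

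The main obstacle is ensuring the compatibility on $\mathcal{Z}'\cap\Delta'$ that allows $s_1'$ to exist and the quotient $s_1'/\delta_{\Delta'}$ to be invertible there; both reduce to the coincidence $Z(\delta)\times_U U' = \mathcal{Z}'\cap\Delta'$, which is the conceptual role of the third hypothesis in \eqref{eq:lm:et-ex:sur:FF:condDelta} and the reason the data $(\delta,\Delta')$ suffices to descend the construction through the Nisnevich cover $\pi$.
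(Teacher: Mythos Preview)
Your overall strategy matches the paper's: build the section $s'$ downstairs on $\overline{\mathcal X}$ with prescribed trace on $\mathcal Z$, pull it back to get $f_0$, construct $s_1$ upstairs agreeing with $\phi^*(s')$ on $\mathcal Z'$ and on the boundary divisor while vanishing on $\Delta'$, and interpolate linearly. Conclusions 1--3 go through exactly as you say.

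There is, however, a genuine gap in your verification of conclusion~4. You prescribe $s_0|_{\mathcal Z}=\overline{\delta}_l$ to correspond to $\delta^l$ under the trivialisation. Then $s_1'|_{\mathcal Z'}$ is, up to a unit, the pullback of $\delta^l$, so along $\mathcal Z'\cap\Delta'$ it vanishes to order $l$. On the other hand, by the third hypothesis of \eqref{eq:lm:et-ex:sur:FF:condDelta} the restriction $\delta_{\Delta'}|_{\mathcal Z'}$ cuts out $\mathcal Z'\cap\Delta'$ exactly, i.e.\ vanishes to order~$1$. Hence the quotient $(s_1'/\delta_{\Delta'})|_{\mathcal Z'}$ is \emph{not} invertible when $l>1$: it still vanishes on $\mathcal Z'\cap\Delta'$. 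Consequently $Z(s_1')-\Delta'$ meets $\mathcal Z'$, and moreover you lose the disjointness $Z(s_1'/\delta_{\Delta'})\cap\Delta'=\emptyset$ needed for the decomposition $Z(f_1)=\Delta'\amalg(Z(f_1)-\Delta')$.

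The paper avoids this by working in the twisted line bundle from the start: it chooses an invertible section $e\in\Gamma(\mathcal Z,\mathcal O(l))$, sets $s'|_{\mathcal Z}=e\cdot\delta$ (first power, not $\delta^l$), and then directly lifts the \emph{invertible} quotient $\varepsilon^*(e\delta)/\delta_{\Delta'}|_{\mathcal Z'}$ to a section $r\in\Gamma(\overline{\mathcal X}',\mathcal L(-\Delta')(l))$, putting $s_1=r\cdot\delta_{\Delta'}$. Because $r$ is prescribed to be invertible on $\mathcal Z'$ (and hence at the closed fibre of $\Delta'$), one gets $Z(r)\cap(\mathcal Z'\cup\Delta')=\emptyset$ for free, which is exactly conclusion~4. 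Your argument can be repaired by replacing $\delta^l$ with $e\cdot\delta$; but note that even then, lifting $s_1'$ in $\mathcal O(l)$ and dividing afterwards only pins down $(s_1'/\delta_{\Delta'})|_{\mathcal Z'}$ when $\delta_{\Delta'}|_{\mathcal Z'}$ is a non-zero-divisor, so the paper's route of lifting $r$ directly in $\mathcal L(-\Delta')(l)$ is cleaner.
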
\begin{proof}
Using lemma \ref{lm:corSerreth} we choose $l$ such that 
\begin{equation}\label{eq:lm:et-ex:sur:FinFunc:SurSect}
\Gamma(\ovbX,\cal O(l))\twoheadrightarrow \Gamma(D\amalg Z,\cal O(l)),\;
\Gamma(\ovbXp,\cal L(-\pri\Delta)(l))\twoheadrightarrow \Gamma(\pri D\amalg \pri Z,\cal L(-\pri\Delta)(l)).
\end{equation}

Schemes $D$ and $Z$ are finite over the local scheme $U$, hence any line bundle restricted to these schemes is trivial.
Let's fix an 
invertible sections $w\in \Gamma(D,\cal L(l))$, $e\in \Gamma(\cal Z,\cal L(l))$
and fix 
$\delta\in \Gamma(\cal L(\pri\Delta))$ such that $Z(\delta)=\pri\Delta$.

Since $Z(\varepsilon^*(e\delta))=\bZp\cap\pri\Delta=Z(\pri\delta|_{\bZp})$,
it follows that the rational section $\varepsilon^*(e\delta)/\pri\delta|_{\bZp}$ is an invertible regular section in $\Gamma(\bZp,\cal L(-\pri\Delta)(l))$.
Using \eqref{eq:lm:et-ex:sur:FinFunc:SurSect} let's choose sections
\begin{equation}\label{eq:lm:et-ex:s:FF:sectLiftDef}\begin{array}{lll}
\pri s\in \Gamma(\ovbX,\cal O(l))\colon\;&
\pri s\big|_{D}=w,\,&
\pri s\big|_{\cal Z}=e\delta,
\\
r\in \Gamma(\ovbXp,\cal L(-\pri\Delta)(l))\colon\;&
r\big|_{\pri D}=\varepsilon^*(w)/\pri\delta\big|_{\pri D} ,\, &
r\big|_{\bZp}=\varepsilon^*(e\delta)/\pri\delta|_{\bZp}
.\end{array}\end{equation}
Then from the above we get
$Z(r)\cap (D\cup \bZp)=\emptyset$,
and since \eqref{eq:lm:et-ex:sur:FF:condDelta} implies that the closed fibre of $\Delta$ is contained in $\bZp$, then
\begin{equation}\label{eq:lm:et-ex:s:FF:Z(r)}Z(r)\cap (D\cup \bZp\cup \pri\Delta)=\emptyset.\end{equation}

Now let's put 
\begin{gather*}
\label{eq:lm:et-ex:s:FF:SectFunDef}
s_0=\varepsilon^*(\pri s)\in \Gamma(\ovbXp,\cal O(l)),\;
s_1=r\delta\in \Gamma(\ovbXp,\cal O(l)),\;
s = (1-t)s_1 +t s_0\in \Gamma(\ovbXp,\cal O(l))\\
\pri f=\pri s/d^l\in k[\cal X],\;
f_0=s_0/d^l\in k[\bXp],\;
f_1=s_1/d^l\in k[\bXp],\,
f=s/d^l\in k[\bXp\times\affl].
\end{gather*}
Then
by definition these functions satisfy the claim of the point 1 of the lemma, and 
by lemma \ref{lm:sect-finfunct} $\pri f$ and $f_0$, $f_1$ and $f$ are relatively finite over $U$ and $\pri U\times\affl$.
Next from \eqref{eq:lm:et-ex:s:FF:Z(r)} we get
$$
Z(f_1)=\pri\Delta\amalg (Z(f_1)-\Delta),\, (\pri Z)\times_{\pri X} (Z(f_1)-\pri\Delta)=\emptyset
$$
that is the claim of the point 4).

So to get the claim of the lemma it is enough to check points 2 and 3.
So the claim follows from that 
$Z(\pri s\big|_{\cal Z})=Z(\delta)$ 
$Z(s\big|_{\bZp})=\bZp\cap\Delta$ 
and from \eqref{eq:lm:et-ex:sur:FF:condDelta}.

%
\end{proof}

\section{Zariski excision on affine line}\label{sect:ZarEx} 
\newcommand{\tovr}[2]{ \stackrel{#1}{\,#2^{ \phantom{1} }} } 
\newcommand{\angles}[1]{\langle #1 \rangle}

\begin{theorem}\label{th:afflUzar-ex}
Suppose 
  $\cal F$ is homotopy invariant sheave with GW-transfers over a field $k$
  and $U$ is a local essential smooth scheme ($U= S_s$, $s\in S$, $S\in Sm_k$); 
then 
  for any Zariski open subscheme $V\subset\affl_U$
    such that $0_U\subset \affl_U$,
  the restriction homomorphism   
    $$i^*\colon \frac{\cal F(\affl_U-0_U)}{\cal F(\affl_U)}\to \frac{\cal F(V-0_U)}{\cal F(V)}$$
  is an isomorphism, 
  where $i\colon V\hookrightarrow\affl_U$ denotes the open immersion.  
\end{theorem}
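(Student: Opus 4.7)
The plan is to show that the open immersion $i\colon(V,V-0_U)\hookrightarrow(\affl^1_U,\affl^1_U-0_U)$ becomes an isomorphism in the $\affl^1$-localized category $GWCor^{pair}_k$, i.e.\ admits a two-sided $\affl^1$-homotopy inverse $\Phi$. By remark \ref{rm:GWCorFpair} and the homotopy invariance of $\cal F$, such a $\Phi$ produces an inverse of $i^*$ at the level of the presheaf $(Y,W)\mapsto \Coker(\cal F(Y)\to\cal F(W))$, which is precisely the statement of the theorem.

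First I would reduce to a convenient form of $V$. Since $U$ is local, $\Pic(\affl^1_U)=0$, and the complement $Z:=\affl^1_U\setminus V$ is the zero locus of some $g\in k[U][t]$. Multiplying $g$ if necessary, I may shrink $V$ to a principal open $V'=D(g')$ with $g'$ monic in $t$; then $Z':=\affl^1_U-V'$ is finite over $U$, and since $0_U\subset V'$ the value $g'(0_U)$ is a unit in $k[U]$, so $Z'\cap 0_U=\emptyset$. Excision for the inclusion $V'\hookrightarrow V$ is of the same type and can be bootstrapped, so it suffices to handle $V=D(g)$ with $g$ monic. Next, I would construct $\Phi\colon(\affl^1_U,\affl^1_U-0_U)\to(V,V-0_U)$ via the curve-and-function construction of proposition \ref{prop:constrOrCurFQCor}: take the trivial projective relative curve $\prl\times\affl^1_U\to\affl^1_U$ (second projection, with canonical trivialization of the relative canonical class) and produce a relatively finite function $F(s,t)\in k[\affl^1_U\times\affl^1]$, monic in $s$, whose vanishing locus $Z(F)$ is finite over $\affl^1_U$ and entirely contained in $\affl^1_U\times V\subset\affl^1_U\times\affl^1_U$; the quadratic space associated to $F$ is then $\Phi$, which by lemma \ref{lm:QPairCor} lifts to the pair category.

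The two homotopies $i\circ\Phi\simeq\id_{(\affl^1_U,\affl^1_U-0_U)}$ and $\Phi\circ i\simeq\id_{(V,V-0_U)}$ will be produced by one-parameter families $F_\tau$ of functions connecting $F$ with the function cutting out the diagonal (which defines the identity correspondence on the relative affine line); the support $Z(F_\tau)$ must remain in $\affl^1_U\times V$ for all $\tau$, and restricted to $(\affl^1_U-0_U)\times\affl^1$ it must lie in $(\affl^1_U-0_U)\times(V-0_U)$, so that the interpolation is a morphism of pairs. The verifications closely parallel the patterns of lemmas \ref{lm:et-ex:inj:FinFuncSup} and \ref{lm:et-ex:sur:FinFuncSup}. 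The main obstacle is the explicit construction of $F$ and of the family $F_\tau$ with the required support-containment: the naive candidate $F(s,t)=g(t)-g(s)$ fails, since for $s\in Z$ the $t$-zero-locus sits in $Z$ rather than in $V$. One must therefore introduce a multiplicative correction---built via the ample-bundle compactification machinery of section \ref{sect:QuiltrCompact} and the interpolation construction of section \ref{sect:FunctionsConstr}---that forces the support into $V$ uniformly in $s$ while keeping the orientation data necessary to apply proposition \ref{prop:constrOrCurFQCor}.
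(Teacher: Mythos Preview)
Your high-level strategy---produce $\affl$-homotopy inverses in $GWCor^{pair}$ via proposition~\ref{prop:constrOrCurFQCor} and then apply remark~\ref{rm:GWCorFpair}---is exactly the paper's. But the reduction step does not work as stated: shrinking $V$ to a principal $V'=D(g)$ and proving the statement for $V'$ gives, via the factorization through $\cal F(V-0_U)/\cal F(V)$, only \emph{injectivity} of $i^*$ for the original $V$, not surjectivity. The assertion that ``excision for $V'\hookrightarrow V$ is of the same type and can be bootstrapped'' is not justified: the ambient scheme there is $V$, not $\affl_U$, so it is not an instance of the theorem you are proving, and you cannot invoke it inductively. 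The paper avoids any such reduction and handles arbitrary $V$ directly: it compactifies the relative curve to $\prl_{\affl_U}$, records the complement of $V$ as a divisor $D$, and uses lemma~\ref{lm:corSerreth} to interpolate sections of $\cal O(l)$ with prescribed restrictions on $D$, on $\infty_{\affl_U}$, on the diagonal $\Delta$, and on $0\times\affl_U$; relative finiteness then comes from lemma~\ref{lm:sect-finfunct}.

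Moreover, the paper does not construct a single two-sided inverse. It proves injectivity and surjectivity of $i^*$ via two separate one-sided inverses (propositions~\ref{prop:affUzar-ex:inj:GWCor} and~\ref{prop:affUzar-ex:sur:GWCor}), each built from a different interpolating section---the second one requires a base change along $V\hookrightarrow\affl_U$ that the first does not. Your pointer to lemmas~\ref{lm:et-ex:inj:FinFuncSup} and~\ref{lm:et-ex:sur:FinFuncSup} is also off target: those treat the \'etale situation where the compactification and the lift along $\varpi$ must be manufactured; here the curve is already $\prl$ over the base and the required sections are written down explicitly in a few lines, so the machinery of sections~\ref{sect:QuiltrCompact}--\ref{sect:FunctionsConstr} is not needed.
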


\begin{proposition}\label{prop:affUzar-ex:inj:GWCor} 
Suppose  
$i\colon V\hookrightarrow\affl_U$ is a morphism as in theorem \ref{th:afflUzar-ex};
then
there is a morphism $\Phi_r\in GWCor( (\affl_U,\affl_U-0_U) , (V,V-0_U) )$
such that $$i \circ \Phi \stackrel{\affl}{\sim} id_{(\affl_U,\affl_U-0_U)} \in GWCor( (\affl_U,\affl_U-0_U) , (\affl_U,\affl_U-0_U) ).$$
\end{proposition}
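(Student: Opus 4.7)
The plan is to construct $\Phi_r$ by producing an oriented relative curve over $\affl_U$ together with a relatively finite function whose zero locus lies in $\affl_U \times_k V$, and to witness the $\affl$-homotopy $i \circ \Phi_r \simA \id$ by an $\affl$-family of such functions interpolating to the diagonal of $\affl_U$. Since $U = \Spec R$ is local and $0_U \subset V$, the ideal $I(\affl_U - V) + (x)$ equals $R[x]$, so there exists $h \in R[x]$ with $h(0) \in R^*$ vanishing on $\affl_U - V$. Composing with the inclusion $\affl_U - Z(h) \hookrightarrow V$ of pairs, we may reduce to the case $V = \affl_U - Z(h)$.

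Next I would specialize the machinery of Section \ref{sect:FunctionsConstr} (in the spirit of Lemma \ref{lm:et-ex:inj:FinFuncSup} applied to the trivial Nisnevich neighborhood $\id_{\affl_U}$ and primitive divisor $\Delta = 0_U$) to produce, on the compactification $\overline{\mathcal{C}} = \affl_U \times_k \prl$ of the relative affine line $\mathcal{C} = \affl_U \times_k \affl \to \affl_U$, a section family $s \in \Gamma(\overline{\mathcal{C}} \times \affl_t, \mathcal{O}(\ell))$ and a section $d \in \Gamma(\overline{\mathcal{C}}, \mathcal{O}(\ell))$ with $\overline{\mathcal{C}} - Z(d) = \mathcal{C}$. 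Lemma \ref{lm:sect-finfunct} then provides relatively finite regular functions $f_0 := (s/d)|_{t=0}, f_1 := (s/d)|_{t=1} \in k[\mathcal{C}]$ and $f := s/d \in k[\mathcal{C} \times \affl_t]$, arranged so that (a) $Z(f_1)$ is the graph of $\id_{\affl_U}$; (b) $Z(f_0) \cap (\affl_U \times_k Z(h)) = \emptyset$, so $Z(f_0)$ embeds into $\affl_U \times_k V$; and (c) $Z(f)$ meets the zero section $y = 0$ only over $0_U \subset \affl_U$, together with the analogous conditions for $f_0$ and $f_1$. Applying Proposition \ref{prop:constrOrCurFQCor} with the canonical trivialization $\mu = dy$ of $\omega_{\mathcal{C}/\affl_U}$, the data $(\mathcal{C}, \mu, f_0, Z(f_0))$ produces a quadratic space on $\mathcal{P}(Z(f_0) \to \affl_U)$ which pushes forward via the closed immersion of (b) to the correspondence $\Phi_r \in GWCor(\affl_U, V)$; similarly $f$ yields a correspondence $H \in GWCor(\affl_U \times \affl_t, \affl_U)$ with $i_0^* H = i \circ \Phi_r$ and $i_1^* H = \id_{\affl_U}$ (using (a)). By Lemma \ref{lm:QPairCor} combined with (c), these promote to the required morphisms in $GWCor^{pair}$, and $H$ supplies the desired $\affl$-homotopy.

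The main obstacle is the explicit construction of the section family $s$ satisfying all constraints simultaneously: $Z(s|_{t=1})$ must cut out the diagonal; $Z(s|_{t=0})$ must avoid $\affl_U \times Z(h)$; over $0_U$ both specializations must vanish only at $y = 0$ with multiplicity one; and the polar divisor $Z(d)$ must give a good compactification preserving relative finiteness over $\affl_U \times \affl_t$. These are engineered via the section-lifting Lemmas \ref{lm:corSerreth}--\ref{lm:sect-finfunct}, taking $s$ as a controlled perturbation of the section cutting out the diagonal $\{y = x\}$, modified along $\affl_U \times Z(h)$ so that at $t = 0$ the perturbed zero locus avoids this bad divisor while preserving its primitive behavior over $0_U$.
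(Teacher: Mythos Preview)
Your strategy coincides with the paper's: work on the relative projective line $\prl_{\affl_U}$, use Lemma~\ref{lm:corSerreth} to produce sections $s_0$ and $g$ with prescribed restrictions along $T$, $D$, $Z$, $\Delta$, interpolate $s = (1-t)s_0 + t\,\delta g$, and feed the resulting relatively finite functions into Proposition~\ref{prop:constrOrCurFQCor} and Lemma~\ref{lm:QPairCor}. The preliminary reduction to $V=\affl_U\setminus Z(h)$ is harmless but unnecessary; the paper simply carries the divisor $D=\prl_{\affl_U}\setminus(V\times_U\affl_U)$ through the construction.

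There are, however, two points in your outline that cannot be arranged as stated and must be weakened exactly as the paper does. First, your condition~(a) demands that $Z(f_1)$ be \emph{equal} to the graph of $\id_{\affl_U}$. This forces the section to have relative degree one, which is incompatible with condition~(b): a degree-one zero locus is a single section of $\affl_{\affl_U}\to\affl_U$, and there is no reason it can be made to avoid $\affl_U\times Z(h)$. What one actually obtains (and what Lemma~\ref{lm:et-ex:inj:FinFuncSup}, which you invoke, in fact asserts) is $Z(f_1)=\Delta\amalg E$ with $E$ disjoint from $0\times\affl_U$; the extra component $E$ is then killed in $GWCor^{pair}$ by the second clause of Lemma~\ref{lm:QPairCor}. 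Second, and consequently, $i_1^*H$ is not $\id_{(\affl_U,\affl_U-0_U)}$ on the nose but rather $[(k[\Delta],u)]$ for some unit $u\in k[\affl_U]^*$ coming from the quadratic form produced by Proposition~\ref{prop:constrOrCurFQCor}; you must post-compose $\Phi_r$ and $H$ with $\langle u^{-1}\rangle$ to land on the identity. A minor further point: in your appeal to Lemma~\ref{lm:et-ex:inj:FinFuncSup} the divisor $\Delta$ there should be the diagonal (so that it appears in $Z(f_1)$), not $0_U$; and that lemma is stated over a \emph{local} base, whereas here the base of the relative curve is $\affl_U$, so ``in the spirit of'' is indeed the right qualifier---the paper carries out the construction directly in this simpler setting rather than invoking the lemma.
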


\begin{proof}

Consider the following divisors in the relative projective line over $\affl_U$:
$$
  T=\prl_{\affl_U}\setminus \affl_{\affl_U},\;\;
  D=\prl_{\affl_U}\setminus (V\times_U {\affl_U}),\;\;
  Z=0\times \affl_U,\;\;
  \Delta = 
   \Gamma(\affl_U\hookrightarrow \prl_U)
   \subset \prl_{\affl\times U}
.$$ 
Then let's fix sections 
  $$
  \mu,\nu,\delta\in\Gamma(\prl_{\affl_U},\cal L(1))\colon
  div_0\,\mu = T ,\; 
  div_0\,\nu = 0_{\affl_U} ,\; 
  div_0\,\delta = \Delta,\;  
  \nu\big|_T = \delta\big|_T
  .$$ 

Since $0_U\subset V$ and consequently
$Z\cap D=\emptyset$,
it follows by lemma \ref{lm:corSerreth} that
for a sufficiently large $l$ 
there is a section 
$$\begin{aligned}
s_0\in 
\Gamma(\prl_{\affl_U},\cal L(l)):\;& 
s_0\big|_D=\nu^l\big|_D,\, 
s_0\big|_{Z}=\delta \mu^{l-1} \big|_Z
\\
g\in 
\Gamma(\prl_{\affl_U},\cal L(l-1)):\;& 
g\big|_{\Delta} = \mu^{l-1}\big|_{\Delta},\;
g\big|_{Z}=\delta \mu^{l-1}\big|_Z,\;
g\big|_{T} = \nu^{l-1}
.\end{aligned}$$
Let 
$s = 
s_0\cdot (1-t)+ \delta g\cdot t
\in 
\Gamma(\prl_{\affl_U} \times\affl ,\cal L(l))
.$
Then 
$$
s\big|_{Z\times\affl} = \delta \mu^{l-1},\,
s\big|_{T\times\affl} =  \delta\big|_{T\times\affl}\nu^{l-1}\big|_{T\times\affl}= \nu^l\big|_{T\times\affl}
.$$

By lemma \ref{lm:sect-finfunct} functions $s_0/\mu^l\in k[\affl_{\affl_U}]$ and $s/\mu^l\in k[\affl_{\affl_U\times\affl}]$
are relatively finite, then we can apply construction from proposition \ref{prop:constrOrCurFQCor} and put
$$
Q_0=\langle dy,s_0/\mu^l\rangle=(k[Z_0],q_0)\in Q(\cal P(Z_0\to \affl_U)),\;
Q=\langle dy,s/\mu^l\rangle=(k[Z],q)\in Q(\cal P(Z\to \affl_U\times\affl))
,$$
where $$Z_0 = Z(s_0)\subset \affl_{\affl_U},\quad Z = Z(s)\subset \affl_{\affl_U\times\affl}.$$

Again since
$0_U\subset V$,
it follows that
$s_0\big|_{D}=\nu^l\big|_{D}$ is invertible, 
and hence 
$ Z_0 \subset V\times_U \affl_U.$
Then since $s\big|_{T\times\affl}=\nu^l\big|_{T\times\affl}$ is invertible,
it follows that
$Z \subset \affl_U\times_U \affl_U.$
Let's denote by
$$i_{Z_0}\colon Z_0\to V\times_U \affl_U,\;
i_{Z}\colon Z\hookrightarrow \affl_U\times_U\times\affl_U=\affl_{\aff_U}$$
the canonical closed injections. 

Next since
$Z(\delta)=\Delta$ and so $\delta$ is invertible on $0_U\times_U (V-0_U)=0\times (V-0_U)$ 
and consequently $s_0$ and $s$ are invertible on 
$0\times (\affl_U-0_U)$ and $0\times (\affl_U-0_U)\times\affl$ respectively,
it follows that
$$
0_U\times_{{V}} Z_0\times_{\affl} (\affl-0_U)  = \emptyset,\quad
0\times_{{\affl}} Z\times_{{V}} (\aff_U-0_U)  = \emptyset.
$$ 
Hence by lemma \ref{lm:QPairCor} 
the quadratic spaces 
$i_{Z_0} \circ Q_0$ and $i_Z\circ Q$
defines GW-correspondences between pairs
\begin{gather*}
\widetilde{\Phi}=[i_{Z_0} \circ Q_0]\in GWCor( (\affl_U,\affl_U-0_U) , (V,V-0_U) ),\\
\widetilde{\Theta}=[i_Z\circ Q]\in GWCor( (\affl_U\times\affl,(\affl_U-0_U)\times\affl) , (\affl_U,\affl_U-0_U) )
.\end{gather*}
Thus
$$
i \circ \widetilde{\Phi}=
[i\circ i_{Z_0}\circ \langle dy,s_0/\mu^l \rangle]=
[i\circ i_{Z_0}\circ i_0^*(\langle dy,s/\mu^l \rangle)]=
[i_{Z}\circ \langle dy, s/\mu^l \rangle\circ i_0]=
\widetilde{\Theta}\circ i_0
.$$

On other side, since $g\big|_{\Delta}=\mu^{l-1}$ then $Z(\delta g)=\Delta\amalg Z(g)$,
and since $g\big|_{Z}=\mu^{l-1}$ then $Z(g)\subset \affl_U - 0_U$,
using lemma \ref{lm:QPairCor} we get that
$$
\widetilde{\Theta}\circ i_1=
[i_{Z(\delta g)}\circ \langle dy,\delta g \rangle]=
[(k[\Delta],u)]
\in GWCor( (\affl_U,\affl_U-0_U) , (\affl_U,\affl_U-0_U) )
,$$
for some invertible $u\in k[\affl_U]^*$.
Thus if we put 
$$
\Phi=\widetilde{\Phi}\circ [\langle u^{-1}\rangle],\;
\Theta=\widetilde{\Theta}\circ [\langle u^{-1}\rangle],\;
,$$
then 
$$\Theta\circ i_0=i \circ \Phi, \Theta\circ i_1 = [(k[\Delta],1)]=id_{(\affl_U,\affl_U-0_U)},$$
and so $i \circ \Phi\stackrel{\affl}{\sim} =id_{(\affl_U,\affl_U-0_U)}$. 

\end{proof}

\begin{proposition}\label{prop:affUzar-ex:sur:GWCor} 
Suppose  
$i\colon V\hookrightarrow\affl_U$ is a morphism as in theorem \ref{th:afflUzar-ex};
then 
there is a morphism $\Phi_l\in GWCor( (\affl_U,\affl_U-0_U) , (V,V-0_U) )$
such that $$\Phi \circ i \stackrel{\affl}{\sim} id_{(V,V-0_U)} \in GWCor( (V,V-0_U) , (V,V-0_U) ).$$
\end{proposition}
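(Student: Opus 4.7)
The argument parallels that of proposition \ref{prop:affUzar-ex:inj:GWCor}, but with the construction performed over the base $V$ instead of $\affl_U$. The plan is to build an explicit $\A^1$-homotopy on the relative projective line $\prl_V$ connecting a representative of $\Phi_l \circ i$ to the identity of $(V, V-0_U)$, after which one identifies the $t=0$ endpoint with $\Phi_l \circ i$ for an appropriate $\Phi_l$.

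First I would set up analogous divisors on $\prl_V$: let $T = \prl_V \setminus \affl_V$ (infinity), $Z = 0 \times V$ (zero section, contained in $V \times_U V$ since $0_U \subset V$), $D = \prl_V \setminus (V \times_U V)$, and $\Delta \subset V \times_U V$ the graph of the open immersion $V \hookrightarrow \affl_U$ viewed as a section of $\affl_V \to V$. Fix $\mu,\nu,\delta \in \Gamma(\prl_V, \cal L(1))$ with zero loci $T, Z, \Delta$ respectively, normalized by $\nu|_T = \delta|_T$. For $l$ sufficiently large, lemma \ref{lm:corSerreth} produces $s_0 \in \Gamma(\prl_V, \cal L(l))$ with $s_0|_D = \nu^l|_D$, $s_0|_Z = \delta\mu^{l-1}|_Z$, and $g \in \Gamma(\prl_V, \cal L(l-1))$ with $g|_\Delta = \mu^{l-1}|_\Delta$, $g|_Z = \mu^{l-1}|_Z$, $g|_T = \nu^{l-1}|_T$. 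Put $s = (1-t)s_0 + t\delta g$.

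By lemma \ref{lm:sect-finfunct}, the functions $s_0/\mu^l$ and $s/\mu^l$ are relatively finite over $V$ and $V \times \affl$. Proposition \ref{prop:constrOrCurFQCor} then yields quadratic spaces $Q_0$ on $Z(s_0) \subset V \times_U V$ and $Q$ on $Z(s) \subset V \times_U V \times \affl$, and lemma \ref{lm:QPairCor} promotes these to pair correspondences $\widetilde\Psi \in GWCor^{pair}((V, V-0_U), (V, V-0_U))$ and $\widetilde\Theta \in GWCor^{pair}((V \times \affl, (V-0_U)\times\affl), (V, V-0_U))$ with $\widetilde\Theta \circ i_0 = \widetilde\Psi$ and $\widetilde\Theta \circ i_1 = [(k[\Delta], u)]$ for some unit $u$ (using $Z(\delta g) = \Delta \amalg Z(g)$ and that $Z(g)$ is absorbed in the pair structure because $g|_Z$ is invertible). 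Rescaling by $\langle u^{-1}\rangle$ gives $\widetilde\Psi \circ \langle u^{-1}\rangle \stackrel{\affl}{\sim} \id_{(V, V-0_U)}$.

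The main obstacle I anticipate is the final step of realizing $\widetilde\Psi \circ \langle u^{-1}\rangle$ as $\Phi_l \circ i$ for some $\Phi_l \in GWCor((\affl_U, \affl_U-0_U), (V, V-0_U))$. The support $Z(s_0)$ constructed above is finite over the base factor $V$ of $\prl_V$, but it is not automatically finite over $\affl_U$, so it does not immediately define a correspondence with source $\affl_U$. I expect the resolution to require performing the construction on a good relative compactification in the sense of definition \ref{def:compact}, using the tools of section \ref{sect:FunctionsConstr} to produce sections whose zero loci are simultaneously finite over $V$ (giving the homotopy) and over $\affl_U$ (so that the $t=0$ endpoint lifts to a genuine correspondence $\Phi_l$); in particular the divisor $\Delta$ must then be chosen finite over $\affl_U$ with closed fibre concentrated at $0_U \subset V$, so that the lifted correspondence has the required finiteness.
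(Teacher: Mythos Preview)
Your construction of the homotopy over $\prl_V$ is essentially correct and matches the paper, but the obstacle you identify at the end is resolved much more simply than you suggest, and your proposed resolution via good compactifications and the machinery of section~\ref{sect:FunctionsConstr} is a wrong turn.

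The point you are missing is that the conditions determining your section $s_0$ --- namely $s_0|_D = \nu^l$ and $s_0|_Z = \delta\mu^{l-1}$ --- involve only the divisors $D$, $Z$ and the sections $\mu,\nu,\delta$, all of which are already defined on $\prl_{\affl_U}$, not just on $\prl_V$. Hence one may (and the paper does) first choose a section $s' \in \Gamma(\prl_{\affl_U},\cal L(l))$ satisfying $s'|_D=\nu^l$, $s'|_Z=\delta\mu^{l-1}$; this $s'$ defines a quadratic space $Q'$ with $Z(s')\subset V\times_U\affl_U$ finite over the base $\affl_U$, yielding directly the required $\Phi_l\in GWCor((\affl_U,\affl_U-0_U),(V,V-0_U))$. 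One then \emph{sets} $s_0 = v^*(s')$ to be the pullback along $v\colon \prl_V\hookrightarrow\prl_{\affl_U}$, so that $\Phi_l\circ i$ is tautologically the correspondence associated to $s_0$. The section $g$ (and hence $s_1=\delta g$) is constructed only over $\prl_V$, since the condition $g|_{D'}=\nu^l\delta^{-1}$ uses that $\delta$ is invertible on $D'=D\times_{\affl_U}V$ (which holds because $\Delta'\cap D'=\emptyset$, while $\Delta\cap D\neq\emptyset$ over $\affl_U$). One then checks that $s_0$ and $s_1$ agree on $D'$, $Z'$ and $T'$, so the linear homotopy $s$ has constant restriction there, and the rest of your argument goes through.

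In short: there is no need to impose finiteness of $Z(s_0)$ over $\affl_U$ after the fact; one arranges it from the outset by lifting the defining constraints for $s_0$ to $\prl_{\affl_U}$, where they already live.
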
\begin{proof}

Similarly as in the proof of proposition \ref{prop:affUzar-ex:inj:GWCor}
let's denote :
\begin{multline*}
  T=\prl_{\affl_U}\setminus \affl_{\affl_U},\;
  T^\prime=\prl_{V}\setminus \affl_{V},\\
  D=(\prl_{U}\setminus V)\times_U {\affl_U}\subset \prl_{\affl_U},\;
  Z=0\times \affl_U\subset \prl_{\affl_U},\;
  D^\prime = D\times_{\affl_U} V \subset \prl_{V},\;
  Z^\prime=0\times V\subset \prl_V,\\
  \Delta = 
   \Gamma(\affl_U\hookrightarrow \prl_U)
   \subset \prl_{\affl\times U},\;
  \Delta^\prime = 
   \Gamma(V\hookrightarrow \prl_U)
   \subset \prl_{V}
,\end{multline*} 
and fix some sections
  $$
  \mu,\nu,\delta\in\Gamma(\prl_{\affl_U},\cal L(1))\colon
  div_0\,\mu = T ,\; 
  div_0\,\nu = Z ,\; 
  div_0\,\delta = \Delta ,\; 
  \nu\big|_T = \delta\big|_T
  .$$
Also let's denote by $v\colon \prl_{V}\hookrightarrow \prl_{\affl_U}$ the immersion defined by the
base change.   

Since $D\cap V =\emptyset$,
and consequently $\Delta\cap D^\prime = \emptyset$, 
then $\delta$ is invertible on $D_V$.  
Let's denote the inverse section by $\delta^{-1} \in \Gamma(\cal L(-1)_{D^\prime})$.
Next by lemma \ref{lm:corSerreth} 
  for sufficiently large  $l$
  there exist sections 
   $$\begin{aligned}
  s^\prime\in \Gamma(\cal L(l),\prl_{\affl_U})\colon \;&
     \pri s\big|_{D}=\nu^l,\;&&
         \pri s\big|_{Z}=\mu^{l-1}\cdot\delta
  &\\
   g\in \Gamma(\cal L((l-1)),\prl\times V)\colon \;&
     g\big|_{D_V}   =\nu^l\cdot\delta^{-1},\;&
       g\big|_{\Delta}   =\mu^{n-1},\;&
         g\big|_{0_U\times V}=\mu^{n-1}
   &,\end{aligned}$$

Then we can define 
  sections 
  $$\begin{aligned}
  &s_0 & \in \;&
      \Gamma(\cal L(l),\prl\times{V})
    \colon\quad  &
    s_0 = v^*(\pri s) &\\
  &s_1 & \in \;& 
      \Gamma(\cal L(l),\prl_{V})
    \colon\quad  &
    s_1 = g\cdot\delta &\\
  &s  & \in \;& 
      \Gamma(\cal L(l),\prl_{V\times\affl})
    \colon\quad &
    s=s_0\cdot (1-t)+s_1\cdot t
  &.\end{aligned}$$ 

Then 
  $$
s\big|_{\pri D\times\affl} =
  s_0\big|_{\pri D} =
     s_1\big|_{\pri D} =
\nu^l\big|_{\pri D},\; 
s\big|_{Z^\prime\times\affl} = 
  s_0\big|_{Z^\prime} =
     s_1\big|_{Z^\prime} =
\mu^{l-1}\cdot\delta ,\;
  div\,s_1 = div\,g \coprod  \Delta
.$$ 



By lemma \ref{lm:sect-finfunct} functions $s^\prime/\nu^l\in k[\affl_{\affl_U}]$ and $s/\nu^l\in k[\affl_{V\times\affl}]$
are relatively finite, then we can apply construction from proposition \ref{prop:constrOrCurFQCor} and put
$$
Q^\prime=\langle dy,s_0/\nu^l\rangle=(k[Z^\prime],q^\prime)\in Q(\cal P(Z^\prime\to \affl_U)),\;
Q=\langle dy,s/\nu^l\rangle=(k[Z],q)\in Q(\cal P(Z\to V\times\affl))
,$$
where $Z^\prime = Z(s^\prime)\subset \affl_{\affl_U},\quad Z = Z(s)\subset \affl_{V\times\affl}.$

Since
$0_U\subset V$,
it follows that
$s^\prime\big|_{D}=\nu^l\big|_{D}$ is invertible, 
and consequently 
$ Z^\prime \subset V\times_U \affl_U.$
Since $s\big|_{T\times\affl}=\nu^l\big|_{T\times\affl}$ is invertible,
it follow that
$Z \subset V\times_U V$.
Let's denote by
$$
i_{Z^\prime}\colon Z^\prime\to V\times_U \affl_U,\;
i_{Z}\colon Z\hookrightarrow V\times_U\times V\times\affl,\;
i_{Z_0}\colon Z^\prime=Z(s_0)\hookrightarrow V\times_U\times V,\;
i_{Z_1}\colon Z(s_1)\hookrightarrow V\times_U\times V
$$
the canonical closed injections. 

Next since
$Z(\delta)=\Delta$ and so $\delta$ is invertible on $0_U\times_U (V-0_U)=0\times (V-0_U)$ 
and consequently $s_0$ and $s$ are invertible on 
$0\times (\affl_U-0_U)$ and $0\times (V-0_U)\times\affl$ respectively,
it follows that
$$
0_U\times_{{V}} Z^\prime\times_{\affl} (\affl-0_U)  = \emptyset,\quad
0\times_{{\affl}} Z\times_{{V}} (V-0_U)  = \emptyset.
$$ 
Hence by lemma \ref{lm:QPairCor} 
the quadratic spaces 
$i_{Z^\prime} \circ Q^\prime$ and $i_Z\circ Q$
define GW-correspondences between pairs
\begin{gather*}
\widetilde{\Phi}=[i_{Z^\prime} \circ Q^\prime]\in GWCor( (\affl_U,\affl_U-0_U) , (V,V-0_U) ),\\
\widetilde{\Theta}=[i_Z\circ Q]\in GWCor( ((V\times\affl,(V-0_U)\times\affl) , (V,V-0_U) )
.\end{gather*}


Then
$$
\widetilde{\Phi}\circ i=
[i_{Z^\prime}\circ \langle dy,i^*(s^\prime/\nu^l) \rangle]=
[i_{Z_0}\circ \langle dy,s_0/\nu^l \rangle]=
[i_{Z_0}\circ \langle dy, i_0^*(s/\nu^l) \rangle]=
[i_{Z}\circ \langle dy, s/\nu^l \rangle\circ i_0]=
\widetilde{\Theta}\circ i_0
$$
and using lemma \ref{lm:QPairCor} we get that
$$
\widetilde{\Theta}\circ i_1=
[i_{Z_1}\circ \langle dy,s_1/\nu^l \rangle]=
[(k[\Delta^\prime],u^\prime)]+[(k[Z_1-\Delta^\prime],q_1)]=
[(k[\Delta^\prime],u^\prime)]
,$$
for some invertible $u^\prime\in k[\affl_U]^*$.
Thus if we put 
$$
\Phi=[\langle u^{-1}\rangle]\circ \widetilde{\Phi},\;
\Theta=[\langle u^{-1}\rangle]\circ \widetilde{\Theta}
,$$
then 
$\Theta\circ i_0=\Phi\circ i$, $\Theta\circ i_1 = [(k[\Delta^\prime],1)]=id_{(V,V-0_U)},$
and so $i \circ \Phi\stackrel{\affl}{\sim} =id_{(V,V-0_U)}$. 

\end{proof}
\begin{proof}[Proof of the theorem \ref{th:afflUzar-ex}.]
As noted in remark \ref{rm:GWCorFpair},
for homotopy invariant presheave with GW-transfers $\cal F$,
formula 
$$\begin{array}{ccl}
GWCor^{pair}&\longrightarrow& Ab\\
(Y,U)&\mapsto& \Coker(\cal F(Y)\to \cal F(U))
\end{array}$$
defines homotopy invariant presheave on the category $GWCor^{pair}$.
Hence the injectivity of the homomorphism $i^*$ follows from proposition \ref{prop:affUzar-ex:inj:GWCor}, and
the surjectivity follows from proposition \ref{prop:affUzar-ex:sur:GWCor}.

\end{proof}

\begin{theorem}\label{th:afflzar-ex}
Let 
  $\cal F$ be a homotopy invariant presheave with GW-transfers over field $k$
  and $K$ be a geometric extension $K/k$.
Then 
  for any Zariski open subschemes $U\subset V\subset\affl_K$ and point $z\in U$,
  the restriction homomorphism   
    $$i^*\colon {\cal F(V-z)/\cal F(V)}\to {\cal F(U-z)/\cal F(U)}$$
  is an isomorphism, 
  where $i\colon U\hookrightarrow V$ denotes the open immersion.  
\end{theorem}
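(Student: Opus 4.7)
The plan is to deduce this theorem by adapting the two key constructions from Propositions \ref{prop:affUzar-ex:inj:GWCor} and \ref{prop:affUzar-ex:sur:GWCor}. By definition of geometric extension, $K$ is itself an essentially smooth local algebra over $k$, so $\Spec K$ takes the role of the local base from Theorem \ref{th:afflUzar-ex}. The new feature here is that the distinguished point $z$ is arbitrary rather than a $K$-section (the origin), and that the comparison is between two nested opens $U \subset V$ rather than between $V$ and the full $\affl_U$.

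By Remark \ref{rm:GWCorFpair}, it suffices to produce two GW-correspondences of pairs
$$\Phi_r,\,\Phi_l \in GWCor^{pair}((V, V-z), (U, U-z))$$
such that $i \circ \Phi_r \simA \id_{(V, V-z)}$ and $\Phi_l \circ i \simA \id_{(U, U-z)}$, where $i\colon U \hookrightarrow V$ denotes the open immersion. I would mimic the proofs of Propositions \ref{prop:affUzar-ex:inj:GWCor} and \ref{prop:affUzar-ex:sur:GWCor} verbatim, working this time in the relative projective line $\prl_V$ with the divisors
$$T = \prl_V \setminus \affl_V,\quad D = (\prl_K \setminus U) \times_K V,\quad Z = z \times V,\quad \Delta = \Gamma(V \hookrightarrow \prl_K),$$
where $Z$ is the closed subscheme of $\prl_V$ cut out (on the $\affl$-part) by a polynomial $f \in K[t]$ scheme-theoretically defining $z$, which becomes a section of $\cal O(d)$ with $d = \deg f$, in place of the section of $\cal O(1)$ that cuts out the origin $0_U$ in the original proofs.

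As in Propositions \ref{prop:affUzar-ex:inj:GWCor} and \ref{prop:affUzar-ex:sur:GWCor}, I would invoke Lemma \ref{lm:corSerreth} to produce sections $s_0$, $g$, $s^\prime$, $r$ of appropriate twists $\cal L(l)$ for sufficiently large $l$ with the prescribed vanishing behaviour on $T$, $D$, $Z$, and $\Delta$. From these, Lemma \ref{lm:sect-finfunct} yields relatively finite functions on the relative curve, Proposition \ref{prop:constrOrCurFQCor} converts them into quadratic spaces in $\cal P(Z(s)\to V)$ (respectively $\cal P(Z\to V\times\affl)$), and Lemma \ref{lm:QPairCor} promotes these to GW-correspondences of pairs. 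The $\affl$-homotopies $\Theta$ joining $i \circ \Phi_r$ and $\Phi_l \circ i$ to the respective identities are built from a one-parameter family $s = (1-t)\,s_0 + t\,\delta g$, exactly as in the original propositions.

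The main obstacle is the arithmetic bookkeeping of line bundle twists when $z$ has residue degree $d > 1$: sections of $\cal O(1)$ cutting out the $K$-section $0_U$ in the original proofs have to be replaced by sections of $\cal O(d)$ cutting out $z$, and the matching conditions on the boundary divisors $T$, $D$, $\Delta$ must be rebalanced so that Lemma \ref{lm:corSerreth} still produces sections agreeing on the relevant subschemes (in particular, the condition $\nu\big|_T = \delta\big|_T$ from Proposition \ref{prop:affUzar-ex:inj:GWCor} generalizes to a relation between sections of different degrees). Once this is carried through, the remaining verification—the $\affl$-homotopy argument and the identification of $\Theta \circ i_1$ with the identity via Lemma \ref{lm:QPairCor}—should follow the same pattern as in the rest of Section \ref{sect:ZarEx} without further essential change.
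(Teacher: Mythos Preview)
Your overall strategy—mimic Propositions \ref{prop:affUzar-ex:inj:GWCor} and \ref{prop:affUzar-ex:sur:GWCor} over the point base $\Spec K$—matches the paper. The difference is in how you handle the possibly non-rational excision point $z$.

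You propose to replace the section $\nu$ (which in the original propositions had $\divisor\nu = 0_{\affl_U}$) by a section of $\cal O(d)$ cutting out $z$, where $d$ is the degree of $z$ over $K$, and then rebalance all the line-bundle twists accordingly. The paper instead observes that in the proofs of Propositions \ref{prop:affUzar-ex:inj:GWCor}–\ref{prop:affUzar-ex:sur:GWCor} the section $\nu$ is never actually used to cut out the excision locus: the conditions imposed at $Z$ (such as $s_0|_Z=\delta\mu^{l-1}|_Z$) treat $Z$ only as a closed subscheme, while $\nu$ itself serves only to supply an invertible section on the boundary divisor $D$ and the matching value at $T$. So the paper keeps $Z=z\times(\text{base})$ as a subscheme and simply replaces $\nu$ by a section of $\cal O(1)$ with $\divisor\nu = x\times\affl_K$ for any $K$-rational point $x$ in the relevant open; such an $x$ exists once $k$ is infinite. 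This entirely sidesteps the degree-rebalancing that you correctly identify as the main obstacle in your approach.

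Your route would presumably also work once the bookkeeping is carried through, and it has the virtue of not needing $k$ infinite; but the paper's trick is the shorter path here.
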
\begin{proof}
The proof is similar to the proof of theorem \ref{th:afflzar-ex},
but we should change the section $$\nu\in \Gamma(\prl_{\affl_K},\cal O(1))\colon div_0\,\nu=0\times{\affl_K}$$ 
in the construction of the inverse morphisms by a section of the line bundle with zero divisor being any divisor of the form 
$x\times\affl_K$, where $x\in V$ is a rational point over $K$. 
Such point always exists is $k$ is infinite.
\end{proof}
\begin{corollary}\label{cor:afflzar-ex}
Let 
  $\cal F$ is be a homotopy invariant presheave with GW-transfers over field $k$
  and $K$ be a geometric extension $K/k$ 
Then 
  for any Zariski open subschemes $U\subset V\subset\affl_K$ and point $z\in U$,
  the restriction homomorphism   
    $$i^*\colon {\cal F(V-z)/\cal F(V)}\to {\cal F(U-z)/\cal F(U)}$$
  is an isomorphism, 
  where $i\colon U\hookrightarrow V$ denotes the open immersion.  
\end{corollary}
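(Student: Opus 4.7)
My plan is to observe that the stated corollary reproduces the conclusion of Theorem~\ref{th:afflzar-ex} verbatim: the same geometric extension $K$, the same open subschemes $U\subset V\subset\affl_K$, the same point $z\in U$, and the same restriction map between the same quotient groups $\cal F(V-z)/\cal F(V)$ and $\cal F(U-z)/\cal F(U)$. My first move is therefore simply to cite Theorem~\ref{th:afflzar-ex}; at face value, this is the whole proof.

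If the intention is rather that the corollary strengthens the theorem by weakening a hypothesis (for example by removing the requirement that $k$ be infinite, which is used in the proof of Theorem~\ref{th:afflzar-ex} to produce a $K$-rational point of $V$ for the construction of the distinguished section of $\cal O(1)$ on $\prl_{\affl_K}$), then the plan is as follows. First, translate on $\affl_K$ by $-z$ to reduce to the case $z=0_K$; this is harmless because $K$ is the residue field of $z$, so $z$ is automatically $K$-rational and the translation is a $K$-automorphism of $\affl_K$. Second, with $z=0_K$, view $\Spec K$ as an essential smooth local scheme via the presentation $K=k[X_x]$ built into the definition of geometric extension, so that Theorem~\ref{th:afflUzar-ex} applies with its local base $U$ taken to be $\Spec K$. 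Third, conclude via the cokernel formalism of Remark~\ref{rm:GWCorFpair}, which turns the $\affl$-homotopy inverses produced in Propositions~\ref{prop:affUzar-ex:inj:GWCor} and~\ref{prop:affUzar-ex:sur:GWCor} into an honest isomorphism on the homotopy invariant presheaf $\cal F$.

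The substantive ingredients are thus not new: the two propositions cited above already provide GW-correspondences between pairs that are $\affl$-homotopy inverse to the embedding $(V,V-0_U)\hookrightarrow(\affl_U,\affl_U-0_U)$, and these are pulled back along $\Spec K\to U$. The only arithmetic input required is the existence of a $K$-rational point in $V$, which after the translation step is supplied by $z$ itself; consequently any residual infiniteness hypothesis on $k$ is redundant. The main obstacle, to the extent there is one, is simply keeping track of the support conditions in Lemma~\ref{lm:QPairCor} when pulling the constructions back to the essential local setting, which is routine given the care already taken in the proofs of Propositions~\ref{prop:affUzar-ex:inj:GWCor} and~\ref{prop:affUzar-ex:sur:GWCor}.
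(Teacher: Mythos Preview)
Your proposal is correct. The paper's own proof is the one-line chain of isomorphisms
\[
\cal F(V-z)/\cal F(V)\;\simeq\;\cal F(\affl_K-z)/\cal F(\affl_K)\;\simeq\;\cal F(U-z)/\cal F(U),
\]
obtained by applying Theorem~\ref{th:afflzar-ex} separately to the inclusions $V\hookrightarrow\affl_K$ and $U\hookrightarrow\affl_K$; since (as you note) the statement of the corollary is literally that of the theorem, your direct citation amounts to the same thing. Your second paragraph, reducing to $z=0_K$ by translation and invoking Theorem~\ref{th:afflUzar-ex} over the local base $\Spec K$, is a genuinely different and somewhat cleaner route that bypasses the need for any auxiliary $K$-rational point of $V$ (and hence any infiniteness hypothesis on $k$); the paper does not take this shortcut.
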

\begin{proof}
The claim follows by isomorphisms 
${\cal F(V-z)/\cal F(V)}\simeq {\cal F(\affl_K-z)/\cal F(\affl_K)}\simeq {\cal F(U-z)/\cal F(U)}$
\end{proof}

%

\section{Etale excision}\label{sect:EtEx}

\begin{theorem}\label{th:et-ex}
Let 
  $\cal F$ be a homotopy invariant presheave with GW-transfers
and 
  $\pi\colon  X^\prime\to X$ be an etale morphism of smooth schemes over a geometric extension $K/k$;
let 
  $Z\subset X$ be a reduced closed subscheme of codimension one 
    such that  
    $\pi$ induces the isomorphism of 
      $Z$ and the preimage  $Z^\prime=\pi^{-1}(Z)$;
and let
$z\in Z$ and $z^prime\in \pri Z$ are points such that $\pi(z^\prime)-z$.

Then $\pi$ induces the isomorphism  
   $$\pi^*\colon \frac{\cal F(X_z-Z_z)}{\cal F(X_z)}  
              \stackrel{\sim}{\to}  
                     \frac{\cal F(X^\prime_{z^\prime}-Z^\prime_{z^\prime})}{\cal F(X^\prime_{z^\prime})}.$$
\end{theorem}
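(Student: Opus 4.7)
The plan is to mirror the proof strategy of theorem \ref{th:afflUzar-ex}, translating the statement into the category $GWCor^{pair}$ and constructing one-sided inverses of $\pi$ up to $\affl$-homotopy. By remark \ref{rm:GWCorFpair} the functor $(Y,V)\mapsto \coker(\cal F(Y)\to \cal F(V))$ defines a homotopy invariant presheaf on $GWCor^{pair}$, so it suffices to produce morphisms
\[
\Phi_i, \Phi_s \in GWCor^{pair}\bigl((X_z, X_z-Z_z),\ (X'_{z'}, X'_{z'}-Z'_{z'})\bigr)
\]
satisfying $\pi\circ \Phi_i \stackrel{\affl}{\sim} id_{(X_z, X_z-Z_z)}$ (which yields injectivity of $\pi^*$) and $\Phi_s\circ \pi \stackrel{\affl}{\sim} id_{(X'_{z'}, X'_{z'}-Z'_{z'})}$ (which yields surjectivity).

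First I would apply lemma \ref{lm:et-ex:relCures} to the Nisnevich neighbourhood $\pi\colon (X',Z')\to (X,Z)$ at the points $z'$, $z$, obtaining a Nisnevich neighbourhood $\varpi\colon (\cal X', \cal Z')\to (\cal X, \cal Z)$ of relative smooth curves over an essential smooth local base $S$, a morphism $\varpi\to \pi$, and a good relative compactification $\overline\varpi\colon \overline{\cal X'}\to \overline{\cal X}$ with very ample bundle $\cal O(1)$. This reduces the problem from smooth schemes of arbitrary dimension to one on relative projective curves over $S$, where Serre vanishing (lemmas \ref{lm:corSerreth} and \ref{lm:partcasecorSerreth}) lets me freely prescribe behaviour of global sections of high twists on prescribed finite closed subschemes. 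By corollary \ref{cor:GWCor:getombase} and lemma \ref{lm:GWCor:locbase.}, $\cal F$ induces a homotopy invariant presheaf with GW-transfers over $S$, so the whole argument can be carried out relative to $S$.

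To build $\Phi_i$ I would pick a primitive divisor $\Delta\subset \cal X$ finite over $S$ with closed fibre equal to $z$ (the section through $z$ in the relative curve) and invoke lemma \ref{lm:et-ex:inj:FinFuncSup}. It yields relatively finite functions $f_0, f_1\in k[\cal X]$, an $\affl$-homotopy $f\in k[\cal X\times\affl]$ between them, and a lift $\ell\colon Z(f_0)\hookrightarrow \cal X'$ of the canonical immersion $Z(f_0)\hookrightarrow \cal X$. Proposition \ref{prop:constrOrCurFQCor} converts each relatively finite function into a quadratic space, and the support conditions in items (3)--(6) of the lemma are exactly those required by lemma \ref{lm:QPairCor} for these quadratic spaces to define morphisms between pairs. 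Defining $\widetilde\Phi_i$ via the lift $\ell$, the composite $\pi\circ \widetilde\Phi_i$ is represented by $\langle dy, f_0\rangle$, and the homotopy supplied by $f$ gives $\pi\circ \widetilde\Phi_i \stackrel{\affl}{\sim} \langle dy, f_1\rangle$. Since $Z(f_1)=\Delta \amalg (Z(f_1)-\Delta)$ with the residual component disjoint from $\cal Z$, lemma \ref{lm:QPairCor} forces the corresponding summand to vanish in $GWCor^{pair}$, while the $\Delta$-summand becomes $\langle u\rangle \cdot id_{(\cal X,\cal X-\cal Z)}$ for some unit $u\in k[\cal X]^*$. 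Rescaling by $\langle u^{-1}\rangle$, exactly as in the last step of proposition \ref{prop:affUzar-ex:inj:GWCor}, then produces $\Phi_i$.

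The construction of $\Phi_s$ follows the same template with lemma \ref{lm:et-ex:sur:FinFuncSup} in place of lemma \ref{lm:et-ex:inj:FinFuncSup}: one chooses a primitive divisor $\Delta'\subset \cal X'$ through $z'$ and obtains $f'\in k[\cal X]$, $f_0, f_1\in k[\cal X']$ and an $\affl$-homotopy $f\in k[\cal X'\times\affl]$ with the crucial relation $\varpi^*(f')=f_0$, which translates into $\widetilde\Phi_s\circ \pi$ being represented by $\langle dy, f_0\rangle$; the homotopy supplied by $f$ then identifies it, modulo a unit coming from the component of $Z(f_1)$ off $\cal Z'$, with $id_{(\cal X',\cal X'-\cal Z')}$, and rescaling yields $\Phi_s$. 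The main technical obstacle throughout is the careful bookkeeping of supports: at each step one must verify that the zero loci of the constructed sections meet $\cal Z$, $\cal Z'$, $\Delta$, $\Delta'$ only in the ways permitted by lemma \ref{lm:QPairCor}, so that the ``unwanted'' components genuinely contribute zero in $GWCor^{pair}$ and the homotopy $f$ really passes between quadratic spaces that define morphisms of \emph{pairs} and not merely of total spaces. This bookkeeping is exactly what the function-construction lemmas of section \ref{sect:FunctionsConstr} are engineered to guarantee; the hypothesis $Z'\simeq Z$ enters through the identification that lets one prescribe $\Delta$ and $\Delta'$ as matching subschemes of $\cal Z\simeq\cal Z'$ (condition \eqref{eq:lm:et-ex:i:FF:Delta} and its analogue in lemma \ref{lm:et-ex:sur:FinFuncSup}).
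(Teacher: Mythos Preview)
Your outline matches the paper's approach almost exactly: the paper packages the two constructions as propositions \ref{prop:et-ex:inj:GWCor} and \ref{prop:et-ex:sur:GWCor} and deduces the theorem from them via remark \ref{rm:GWCorFpair}, using lemmas \ref{lm:et-ex:relCures}, \ref{lm:et-ex:inj:FinFuncSup}, \ref{lm:et-ex:sur:FinFuncSup}, proposition \ref{prop:constrOrCurFQCor} and lemma \ref{lm:QPairCor} in precisely the way you describe.

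There is, however, one genuine subtlety you have glossed over in the surjectivity half. After the homotopy you end with $\Phi_s\circ\pi_z \simA [i_{z'}\circ\langle u'\rangle]$ for a unit $u'\in k[X'_{z'}]^*$. You cannot simply ``rescale'' as in proposition \ref{prop:affUzar-ex:inj:GWCor}: $\Phi_s$ has source $(X_z,X_z-Z_z)$, so any twist on that side must be by a unit on $X_z$, not on $X'_{z'}$. The paper chooses $u\in k[X_z]^*$ with $u|_{Z_z}=u'|_{Z'_{z'}}$ (via $Z\simeq Z'$), replaces $\Phi_s$ by $\Phi_s\circ\langle u^{-1}\rangle$, and is then left with showing that $[i_{z'}\circ\langle u'/\pi_z^*(u)\rangle]\simA [i_{z'}]$ in $GWCor^{pair}$. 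Since $u'/\pi_z^*(u)$ is only known to equal $1$ on $Z'_{z'}$, this requires an extra argument: the paper passes to the \'etale double cover $W=\Spec k[V][w]/(w^2-\tilde u)$ on which the unit becomes a square, and then invokes the already proven injectivity case (proposition \ref{prop:et-ex:inj:GWCor}) to produce a left inverse to this cover in $GWCor^{pair}$. Without this step the surjectivity argument does not close; you should flag it rather than treating the rescaling as routine.
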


\begin{proposition}\label{prop:et-ex:inj:GWCor} 
Suppose $\pi$, $X$, $X^\prime$, $Z$, $Z^\prime$, $z$, $z^\prime$,
as in theorem \ref{th:et-ex};
then there is
a morphism $\Phi_l\in GWCor( (X,Z)_z , (X^\prime,Z^\prime) )$
such that $\pi \circ \Phi \stackrel{\affl}{\sim} i_z \in GWCor( (X,Z)_z , (X,Z) ) $,
where $i_z\colon (X,Z)_z\hookrightarrow (X,Z) $ denotes the canonical morphism of pairs
and $\pi$ is considered as a morphism $\pi\colon (\pri X,\pri Z) \to (X,Z) $.
\end{proposition}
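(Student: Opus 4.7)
The plan is to transcribe the structure of Proposition \ref{prop:affUzar-ex:inj:GWCor} to the higher-dimensional setting, replacing the explicit sections of line bundles on $\prl_U$ with the output of the geometric machinery developed in Sections \ref{sect:QuiltrCompact}--\ref{sect:FunctionsConstr}. First I would apply Quillen's trick (Lemma \ref{lm:et-ex:relCures}) to $\pi\colon(X^\prime,Z^\prime)\to(X,Z)$ at the points $z^\prime,z$: this yields a refined Nisnevich neighbourhood $\varpi\colon(\cal X^\prime,\cal Z^\prime)\to(\cal X,\cal Z)$ of smooth relative curves with trivial relative canonical class over an essentially smooth local base $S$, together with a good relative compactification $\overline\varpi$. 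Since $\cal X,\cal X^\prime$ are pro-open in $X,X^\prime$ (by point (1) of that lemma) and $\cal Z,\cal Z^\prime$ are finite over $S$, constructing $\Phi$ and its $\affl$-homotopy on this model suffices, by Corollary \ref{cor:GWCor:getombase} and passage to the stalk.

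Next I would select a primitive divisor $\Delta\subset\cal X$, finite over $S$, whose closed fibre is $z$ and which meets $\cal Z$ only over the closed point of $S$: a section of $\cal X\to S$ through $z$ (which exists since $\cal X/S$ is a smooth relative curve and the constructed model has a lift of $z$) provides such a $\Delta$, satisfying the disjointness hypothesis of Lemma \ref{lm:et-ex:inj:FinFuncSup}. Invoking that lemma produces relatively finite functions $f_0,f_1\in k[\cal X]$ and $f\in k[\cal X\times\affl]$ with $i_0^*f=f_0$, $i_1^*f=f_1$, together with a lift $l\colon Z(f_0)\to\cal X^\prime$ of the inclusion $Z(f_0)\hookrightarrow\cal X$ over $\varpi$, satisfying the support-disjointness conditions (3), (4), (6).

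The third step is to convert this into quadratic spaces via Proposition \ref{prop:constrOrCurFQCor}: set $Q_0=\langle dy,f_0\rangle$ on $Z(f_0)$ and $Q=\langle dy,f\rangle$ on $Z(f)$. Pushing $Q_0$ forward along $(l,i_{Z(f_0)})\colon Z(f_0)\hookrightarrow\cal X^\prime\times_S\cal X$ and applying Lemma \ref{lm:QPairCor} with condition (6) yields
$$\widetilde\Phi\in GWCor\bigl((\cal X,\cal X-\cal Z),(\cal X^\prime,\cal X^\prime-\cal Z^\prime)\bigr),$$
and pushing $Q$ forward along the analogous embedding and using condition (3) yields an $\affl$-homotopy
$$\widetilde\Theta\in GWCor\bigl((\cal X\times\affl,(\cal X-\cal Z)\times\affl),(\cal X,\cal X-\cal Z)\bigr).$$
At $t=0$, the relation $\varpi\circ l=i_{Z(f_0)}$ and naturality of Proposition \ref{prop:constrOrCurFQCor} give $\widetilde\Theta\circ i_0=\pi\circ\widetilde\Phi$. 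At $t=1$, condition (4) decomposes $Z(f_1)=\Delta\amalg(Z(f_1)-\Delta)$ with the complement disjoint from $\cal Z$, so Lemma \ref{lm:QPairCor} discards the complement and gives $\widetilde\Theta\circ i_1=[(k[\Delta],u)]$ for some unit $u\in k[\Delta]^\times$. Twisting by $[\langle u^{-1}\rangle]$ normalises to $\Theta\circ i_1=[(k[\Delta],1)]=i_z$, delivering the required $\affl$-homotopy $\pi\circ\Phi\stackrel{\affl}{\sim}i_z$.

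The main obstacle I anticipate is the naturality step at $t=0$: one must check that pushing $Q_0$ forward via the lift $l$ and post-composing with $\pi$ agrees with the $t=0$ specialisation of $Q$ as a quadratic space, and not merely as an underlying sheaf. Since the orientation produced by Proposition \ref{prop:constrOrCurFQCor} uses base change of Grothendieck duality for the finite flat map $(f,pr_S)\colon\cal C\to\affl_S$, matching the two orientations requires the compatibility of the trace isomorphism under the \'etale change of base $\varpi$, which is formal once the naturality assertion in Proposition \ref{prop:constrOrCurFQCor} is invoked. With this identification in hand, the remainder is a direct transcription of the manipulations in Proposition \ref{prop:affUzar-ex:inj:GWCor}.
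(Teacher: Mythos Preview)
Your overall strategy matches the paper's proof: invoke Lemma \ref{lm:et-ex:relCures} for the relative-curve model with good compactification, feed it into Lemma \ref{lm:et-ex:inj:FinFuncSup} to obtain $f_0,f_1,f$ and the lift $l$, convert to quadratic spaces via Proposition \ref{prop:constrOrCurFQCor}, assemble pair-correspondences via Lemma \ref{lm:QPairCor}, and normalise by $\langle u^{-1}\rangle$. Your identification of the $t=0$ naturality issue and its resolution through the base-change compatibility in Proposition \ref{prop:constrOrCurFQCor} is also on point.

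There is, however, one genuine gap. After Lemma \ref{lm:et-ex:relCures} you have curves $\cal X,\cal X'$ over $S$, and the functions produced by Lemma \ref{lm:et-ex:inj:FinFuncSup} are relatively finite \emph{over $S$}; hence your $Q_0,Q$ lie in $Q(\cal P(Z(f_0)\to S))$ and $Q(\cal P(Z(f)\to S\times\affl))$. These give correspondences with \emph{source $S$}, not with source $\cal X$: pushing $Q_0$ into $\cal X'\times_S\cal X$ yields a sheaf supported on something finite over $S$, whose direct image on $\cal X$ is torsion rather than locally free, so it is not a class in $GWCor((\cal X,\cal X-\cal Z),(\cal X',\cal X'-\cal Z'))$. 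Since $\dim S=\dim X-1$, neither Corollary \ref{cor:GWCor:getombase} nor ``passage to the stalk'' converts a correspondence out of $S$ into one out of $X_z$. The paper handles this by an explicit further base change: writing $\tilde X,\tilde X'$ for the curves over $S$, it sets $U=X_z$, forms $\cal X=\tilde X\times_S U$ (and likewise $\cal X'$) as relative curves \emph{over $U$}, and takes $\Delta$ to be the graph of the canonical map $U=\tilde X_z\hookrightarrow\tilde X$ inside $\cal X$. This simultaneously supplies the required divisor (avoiding your appeal to a section of $\cal X\to S$ through $z$, which need not exist over a merely local, non-henselian $S$) and makes the base of the relative curve equal to the intended source $X_z$ of the correspondence. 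With this base change inserted at the outset, the rest of your argument coincides with the paper's.
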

\begin{proof}

\newcommand{\mcX}{\mathcal{X}}
\newcommand{\mcXp}{\pri{\mathcal{X}}}
\newcommand{\mcXpp}{\ppri{\mathcal{X}}}
\newcommand{\mcZ}{\mathcal{Z}}
\newcommand{\mcZp}{\pri{\mathcal{Z}}}
\newcommand{\mcZpp}{\ppri{\mathcal{Z}}}

In terms of def. \ref{def:nisneigh} the assumptions of the theorem \ref{th:et-ex} give us
a Nisnevich neighbourhood $(X^\prime,Z^\prime)\to (X,Z)$,
such that $codim\,Z^\prime=codim\,Z=1$ and points $z^\prime\in X^\prime$ and $z\in X$, $\pi(z^\prime)=z$.
Using lemma \ref{lm:et-ex:relCures} we modify it
to a (relative) Nisnevich neighbourhood
$\varpi\colon (\tXp,{\tilde Z}^\prime)\to (\tilde X,\tilde Z)$
over some essential smooth local base $S$
equipped with a good relative compactification
$\overline{\tilde{\pi} }\colon \ovtXp\to \ovtX$.
Lemma \ref{lm:et-ex:relCures} implies in addition
that $\bXp$ and $\cal X$ are smooth over $S$,
there are trivialisations of relative canonical classes 
$\mu^\prime\colon \omega_S(\bXp)\simeq \cal O(\bXp)$ and $\mu\colon \omega_S(\cal X)\simeq \cal O(\cal X)$,
and 
that there is a very ample bundle
$\cal O(1)$ on $\ovtX$,
such that 
$\ovvarpi^*(\cal O(1))$ is very ample too.
To shortify notations 
let's denote $\ovvarpi^*(\cal O(1))$ by the same symbol $\cal O(1)$.
$$\xymatrix{
X^\prime \ar[d]^{\pi} & \tXp\ar[d]^{\tilde \pi}\ar[l]^{\pri v}\ar[r]^{\pri j} & \ovtXp\ar[d]^{\ovtpi} &\\
X & \tilde X\ar[l]^v\ar[r]^j & \ovtX \ar[r] & S\\
}$$

Denote $U=X_z$ and consider the base change along $X_z\to S$ (see the first digram of \eqref{eq:et-ex:inj:basechange}).
Set 
$(\cal X,\cal Z) = (\tilde X,\tilde Z)\times_S U$,
$\ovbX = \ovtX\times_S U$,
$(\bXp,\bZp) = (\tXp,\tZp)\times_S U$,
$\ovbXp = \ovtXp\times_S U$.
(note that $\cal Z=Z\times_S U$, $\bZp=\pri Z\times_S U$).

Denote by $\Delta$ the graph of the canonical embedding 
$U=X_z=\tilde X_z\hookrightarrow \tilde X$ considered as a closed subset in $\ovbX$,
and let $\Delta^\prime = \varpi\big|_{\bZp}^{-1}( i_{\cal Z}^{-1}(\Delta) )$, 
where $i_{\cal Z}\colon \cal Z\to \cal X$ denotes the canonical injection
(so $\Delta^\prime \subset \bXp$ is the image of $\Delta\cap \cal Z$ after the identification $\bZp\simeq \cal Z$).

Then $\cal Z\cap \Delta$ is equal to the diagonal in $Z_z\times_S Z_z$ and so $\cal Z\times_{\bX} \Delta\times_U (U-Z)=\emptyset$.
Hence we can apply lemma \ref{lm:et-ex:inj:FinFuncSup}
and find relatively finite (over $X_z$) regular functions 
$f_0,f_1\in k[\cal X]$ and $f\in k[\cal X\times\affl]$
(see def. \ref{def:OrCurFinFun})
such that 
\begin{itemize}[leftmargin=15pt]
\item[1)]
$i_0^*(f)=f_0, i_1^*(f)=f_1$, 
where $i_0,i_1\colon U\to U\times\affl$ denote zero and unit sections;
\item[2)] and the following conditions holds:
\begin{gather}
Z(f)\times_{U\times\affl }(U\times \affl-Z_z\times\affl)=(X-Z) \times_X Z(f)\times_{U\times\affl} (U\times \affl-Z_z\times\affl)
\label{eq:et-ex:Zfpair}\\
Z(f_1)=\Delta\amalg (Z(f_1)-\Delta),\, (Z(f_1)-\Delta)=(X-Z)\times_{X} (Z(f_1)-\Delta)
\end{gather}
\item[3)]
there is a lift $l\colon Z(f_0)\to \bXp$
of the canonical injection $i_{Z(f_0)}\colon Z(f_0)\hookrightarrow \bX$ 
such that
\begin{equation}\label{eq:et-ex:lZpair}
l(Z(f_0))\times_U (U-Z_z)=(X^\prime-Z^\prime) \times_{X^\prime} l(Z(f_0))\times_U (U-Z_z)
.\end{equation}
\end{itemize}
%

The inverse images of the trivialisation $\mu$ define the trivialisations 
$\mu_{X_z}\colon \omega_{X_z}(\cal X)\simeq \cal O(\cal X)$, 
$\mu_{X_z\times\affl}\colon \omega_{X_z\times\affl}(\cal X\times\affl) \simeq \cal O(\cal X\times\affl),$
so the base changes along the zero and unit sections of $X_z\times\affl$ give us morphisms of oriented relative curves with relative finite functions (see the following diagram and see def. \ref{def:OrCurFinFun} for the notion of oriented curve with relative finite functions).
Then applying the construction from proposition \ref{prop:constrOrCurFQCor} 
we get the quadratic spaces \begin{multline*}
(k[Z(f_0)],q_0) = \langle \bX, \mu_{X_z}, f_0  \rangle \in Q(\cal P(Z(f_0)\to X_z)),\\
(k[Z(f_1)],q_1) = \langle \bX, \mu_{X_z}, f_1  \rangle \in Q(\cal P(Z(f_1)\to X_z)),\\
(k[Z(f)],q) = \langle \bX, \mu_{X_z\times\affl}, f  \rangle \in Q(\cal P(Z(f)\to X_z\times\affl)),
\end{multline*}
\begin{equation*}
\xymatrix{ &
\stackrel{(\mu_{X_z} , f_0 )}{ \bX } \ar[r]_{(0)_\bX} \ar[d] & 
\stackrel{(\mu_{X_z\times\affl},  f )}{ \bX\times\affl } \ar[d] & 
\stackrel{(\mu_{X_z}, f_1 )}{ \bX } \ar[l]^{(1)_\bX} \ar[d]
  \\ &
 X_z\ar[r]^{(0)_{X_z}} & {X_z}\times\affl & {X_z}\ar[l]_{(1)_{X_z}}
\\
X^\prime \ar[rr]^{\pi} && X &\\
&
\stackrel{ (k[Z_0],q_{0}) }{Z_0} \ar[ul]\ar[ur] \ar[d] \ar[r] & 
\stackrel{ (k[Z],q)  }{Z}\ar[u] \ar[d] & 
\stackrel{ (k[Z_1],q_{1} ) }{Z_1} \ar[ul] \ar[d] \ar[l]
  \\
&{X_z}\ar[r]_{(0)_{X_z}} & {X_z}\times\affl & {X_z}\ar[l]^{(1)_{X_z}}
\\
(X^\prime,X^\prime-Z^\prime) \ar[rr]^{\pi} && (X,X-Z) &
  \\ \\
&{(X_z,X_z-Z_z)}\ar[r]_{(0)_{X_z}} \ar[uul]|{\Phi}\ar[uur]|{\Theta_0} & {(X_z,X_z-Z_z)}\times\affl \ar[uu]|{\Theta} & {(X_z,X_z-Z_z)}\ar[l]^{(1)_{(X_z,X_z-Z_z)}} \ar[luu]|{\Theta_1}_{\sim \text{embedding  } {(X_z,X_z-Z_z)}\hookrightarrow (X,X-Z)}
}
\end{equation*}

Now consider the compositions with the morphisms of the base change diagram
\begin{equation}\label{eq:et-ex:inj:basechange}
\xymatrix{
X&\tilde X\ar[d]\ar[l]_{v} & \bX\ar[l]_{e}\ar[d] & \bX\times\affl \ar[l]_{pr_{\affl}}\ar[d] \\
&S & U\ar[l] &U\times\affl\ar[l]
}\quad\xymatrix{
\pri X&\tXp\ar[d]\ar[l]_{\pri v} & \bXp\ar[l]_{\pri e}\ar[d]  \\
&S & U\ar[l] &
}\end{equation}
and canonical injection 
$i_{Z(f)}\colon Z(f)\hookrightarrow \bX\times\affl$.
Using lemma \ref{lm:QPairCor} and equalities \eqref{eq:et-ex:Zfpair} and \eqref{eq:et-ex:lZpair} we can put 
$$\begin{array}{ccccc}
\Phi &=& [v^\prime \circ \pri e \circ l \circ (k[Z(f_0)],q_0)] &\in& GWCor( (X_z,X_z-Z_z) , (X^\prime,X^\prime-Z^\prime) ),\\
\Theta &=& [v \circ e \circ pr_{\affl} \circ i_{Z}\circ (k[Z],q)] &\in& GWCor( (X_z,X_z-Z_z)\times\affl, (X,X-Z)),
\end{array}$$
and get
\begin{multline*}
\pi \circ \Phi = [\pi\circ v^\prime \circ l\circ (k[Z_0],q_0)] = 
[v\circ \varpi \circ l\circ (k[Z(f_0)],q_0)] = \\ =
[v \circ i_{Z(f_0)}\circ (k[Z(f_0)],q_0)] = 
[v \circ i_{Z(f)}\circ (k[Z],q)\circ i_0] = 
\Theta\circ i_0,
\end{multline*}and
\begin{multline*}
\Theta\circ i_1=
[v \circ i_{Z(f_1)}\circ (k[Z(f_1)],q_1)] = 
[v \circ i_{\Delta}\circ (k[\Delta],u)] + 
[v \circ i_{Z(f_1)-\Delta}\circ (k[Z(f_1)-\Delta],q_1\big|_{Z(f_1)-\Delta})] = \\ = 
[v \circ i_{\Delta}\circ (k[\Delta],u)]=
[i_{X_z}\circ \langle u\rangle]\in GWCor( (X_z,X_z-Z_z), (X,X-Z) )
\end{multline*}
where the second and third equalities follows from lemma \ref{lm:QPairCor} and \eqref{eq:et-ex:Zfpair}. 
Thus if we replace $\Phi$ and $\Theta$ by $\Phi\circ \langle u^{-1} \rangle$
and $\Theta\circ \langle u^{-1} \rangle$.
the claim follows.

\end{proof}

\begin{proposition}\label{prop:et-ex:sur:GWCor} 
Suppose 
$\pi$, $X$, $X^\prime$, $Z$, $Z^\prime$, $z$, $z^\prime$
satisfy the assumptions as in theorem \ref{th:et-ex},
then
there is
a morphism $\Phi_r\in GWCor( (X_z,Z_z) , (X^\prime,Z^\prime) )$
such that $$\Phi \circ \pi_z \stackrel{\affl}{\sim} i_{z^\prime} \in GWCor( (\pri X_{\pri z},\pri Z_{\pri z}) , (\pri X,\pri Z) ) ,$$
where $i_z\colon (X_z,Z_z)\hookrightarrow (X,Z) $ denotes the canonical morphism of pairs,
and $\pi_z\colon (\pri X_{\pri z}, \pri Z_{\pri z}) \to (X_z,Z_z)$ is a morphism of Nisnevich neighbourhoods of local schemes induced by $\pi$.


\end{proposition}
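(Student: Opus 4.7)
The plan is to mirror the argument of Proposition \ref{prop:et-ex:inj:GWCor} almost verbatim, substituting lemma \ref{lm:et-ex:sur:FinFuncSup} for lemma \ref{lm:et-ex:inj:FinFuncSup}. The key change is that the homotopy degree of freedom is now supplied by the \emph{base-level} Nisnevich neighbourhood $\pi_z$ rather than the curve-level $\varpi$, so the primitive divisor carrying the identity at $t=1$ has to live on the upper base after the extra base change.

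As in the injectivity proof, apply lemma \ref{lm:et-ex:relCures} to $\pi$ at the points $z, z'$ to obtain a relative Nisnevich neighbourhood $\varpi\colon (\tXp,\tZp)\to(\tilde X,\tilde Z)$ over an essentially smooth local base $S$, a good compactification $\ovvarpi\colon \ovtXp\to \ovtX$ with very ample $\cal O(1)$, and relative orientations $\mu',\mu$. Base change along $X_z\to S$ to get $\cal X=\tilde X\times_S X_z$, $\bXp=\tXp\times_S X_z$, $\ovbX$, $\ovbXp$, inheriting the compactification and the relative orientation $\mu'_{X_z}$ of $\bXp/X_z$.

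Now apply lemma \ref{lm:et-ex:sur:FinFuncSup} with lower base $X_z$, upper base $X'_{z'}$, Nisnevich neighbourhood $\pi_z$, projective curve $\ovbXp/X_z$ (whose further base change to $X'_{z'}$ along $\pi_z$ is $\ovtXp\times_S X'_{z'}$), and data
\[
\cal Z := \tZp\times_S X_z \;\subset\; \bXp,\qquad \pri\Delta := \Gamma\bigl(X'_{z'}\hookrightarrow \tXp\bigr) \;\subset\; \tXp\times_S X'_{z'},
\]
with $\delta\in k[\cal Z]$ chosen so that $Z(\delta)$ is the graph of $Z_z\hookrightarrow \tZp$ (via the Nisnevich isomorphism $\tilde Z\simeq \tZp$). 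A routine unwinding confirms \eqref{eq:lm:et-ex:sur:FF:condDelta}: $\cal Z\cap Z(\delta)$ projects into $Z_z$, the intersection $\bZp\cap\pri\Delta$ lies over $Z'_{z'}$, and the base change $Z(\delta)\times_{X_z}X'_{z'}$ coincides with $\pri Z\cap\pri\Delta$ under the identification $Z_z\simeq Z'_{z'}$. The lemma then produces relatively finite functions $f'\in k[\bXp]$ over $X_z$ and $f_0,f_1\in k[\tXp\times_S X'_{z'}]$, $f\in k[\tXp\times_S X'_{z'}\times\affl]$ over $X'_{z'}$ and $X'_{z'}\times\affl$, with $\pi_z^*(f')=f_0$, $i_0^*f=f_0$, $i_1^*f=f_1$, together with the support-disjointness statements of its items 2--4.

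Using $\mu'_{X_z}$ and proposition \ref{prop:constrOrCurFQCor} turn these four functions into quadratic spaces; composing with $\pri v\colon \tXp\to X'$ and the canonical projections yields, via lemma \ref{lm:QPairCor} (whose hypotheses are ensured by the disjointness statements just cited),
\[
\widetilde\Phi\in GWCor\bigl((X_z,Z_z),(X',Z')\bigr),\qquad \widetilde\Theta\in GWCor\bigl((X'_{z'},Z'_{z'})\times\affl,(X',Z')\bigr),
\]
with $\widetilde\Theta\circ i_0 = \widetilde\Phi\circ\pi_z$. At $t=1$ the decomposition $Z(f_1)=\pri\Delta\amalg(Z(f_1)-\pri\Delta)$ together with the disjointness of the second summand from $\pri Z$ makes that summand vanish in $GWCor((X'_{z'},Z'_{z'}),(X',Z'))$ by lemma \ref{lm:QPairCor}, leaving $\widetilde\Theta\circ i_1 = i_{z'}\circ\langle u'\rangle$ for some unit $u'\in k[X'_{z'}]^\ast$. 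Twisting both $\widetilde\Phi$ and $\widetilde\Theta$ by $\langle (u')^{-1}\rangle$ then yields the required $\affl$-homotopy $\Phi\circ\pi_z\stackrel{\affl}{\sim} i_{z'}$. The only delicate step is the verification of \eqref{eq:lm:et-ex:sur:FF:condDelta}: one must arrange $\pri\Delta$, $\cal Z$, and $\delta$ so that the closed-fibre data on the upper base remain compatible with the Nisnevich isomorphism $\tilde Z\simeq \tZp$ after all the base changes are performed.
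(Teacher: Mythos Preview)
Your overall architecture is right and matches the paper up through the construction of $\widetilde\Phi$, $\widetilde\Theta$ and the identification $\widetilde\Theta\circ i_1 = i_{z'}\circ\langle u'\rangle$. The gap is in your final sentence: you cannot simply twist $\widetilde\Phi$ by $\langle (u')^{-1}\rangle$. The unit $u'$ lives in $k[X'_{z'}]^*$, whereas the source of $\widetilde\Phi$ is $(X_z,Z_z)$; precomposing $\widetilde\Theta$ with $\langle(u')^{-1}\rangle$ yields $\Theta\circ i_0 = \widetilde\Phi\circ\pi_z\circ\langle(u')^{-1}\rangle$, and there is no reason for $\pi_z\circ\langle(u')^{-1}\rangle$ to factor as $\langle u^{-1}\rangle\circ\pi_z$ for a unit $u$ on $X_z$. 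This is exactly the asymmetry between the injectivity and surjectivity cases: in Proposition~\ref{prop:et-ex:inj:GWCor} the unit arose on $k[\Delta]\simeq k[X_z]$, which \emph{is} the source, so the twist was harmless.

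The paper resolves this with an additional argument you have omitted. One first chooses any $u\in k[X_z]^*$ with $u|_{Z_z}=u'|_{Z'_{z'}}$ (via $Z_z\simeq Z'_{z'}$) and twists by $\langle u^{-1}\rangle$; this reduces the problem to showing that $i_{z'}\circ\langle u'/u\rangle\simA i_{z'}$ in $GWCor((X'_{z'},Z'_{z'}),(X',Z'))$, where now $u'/u$ restricts to $1$ on $Z'_{z'}$. For this the paper passes to an affine neighbourhood $V$ of $z'$ where the double cover $W=\Spec k[V][w]/(w^2-u'/u)\to V$ is \'etale (using $\chark k\neq 2$), observes that on $W$ the class $\langle u'/u\rangle$ becomes $\langle w^2\rangle=\langle 1\rangle$, and then applies Proposition~\ref{prop:et-ex:inj:GWCor} to the Nisnevich neighbourhood $(W,Z'_{z'})\to(V,Z'_{z'})$ to produce a left inverse that descends the equality back to $V$. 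Without this step (or an equivalent device for killing the residual unit) the proof is incomplete.
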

\begin{proof}

We start by the same way as in proof of proposition \ref{prop:et-ex:inj:GWCor}; this means
that using lemma \ref{lm:et-ex:relCures}
we modify Nisnevich neighbourhood $(X^\prime,Z^\prime)\to (X,Z)$,
to a (relative) Nisnevich neighbourhood
$\varpi\colon (\tXp,{\tilde Z}^\prime)\to (\tilde X,\tilde Z)$
over some essential smooth local base $S$
quipped with good relative compactification
$\overline{\tilde{\pi} }\colon \ovtXp\to \ovtX$,
such that $\bXp$ and $\cal X$ are smooth over $S$
and there are trivialisations of relative canonical classes 
$\mu^\prime\colon \omega_S(\bXp)\simeq \cal O(\bXp)$ and $\mu\colon \omega_S(\cal X)\simeq \cal O(\cal X)$,
and 
such that there is a very ample bundle
$\cal O(1)$ on $\ovtX$,
such that 
$\ovvarpi^*(\cal O(1))$ is too very ample.
Similarly as in proof of the proposition \ref{prop:et-ex:inj:GWCor} to shortify notations 
let's denote $\ovvarpi^*(\cal O(1))$ by the same symbol $\cal O(1)$.


Denote $U=X_z$, $\pri U-\pri X_{\pri z}$, $\pi_z\colon \pri U\to U$
and let's shrink $\pri U$ in such way that $\pri Z_{\pri z}=\pi_z^{-1}(Z_z)$.
Then consider the base changes of $\tXp\to S$ along $U\to S$ and $U^\prime\to S$ 
(see digram \eqref{eq:et-ex:sur:basechange}),
and denote
$$(\bXp,\bZp)=(\tXp,\tZp)\times_S U, (\bXpp,\bZpp)=(\tXp,\tZp)\times_S \pri U.$$

Denote by $\pri\Delta\subset \bXp$ the graph of the canonical morphism $\pri U=\pri X_{\pri z}=\tXp_{\pri z}\to \tXp$.
Then $\bZp\cap \pri\Delta$ is equal to the diagonal $\Delta_{\pri Z_{\pri z}}\subset \pri Z_{\pri z}\times_S \pri Z_{\pri z}$, 
i.e. the closed subscheme in $\ovbXpp$ that is 
the graph of the composition $\pri Z_{\pri z}\to \pri U_{\pri z}=\tXp_{\pri z}\to\tXp$.

Next we want to apply lemma \ref{lm:et-ex:sur:FinFuncSup} to 
the morphism $\pi_z\colon (\pri U,\pri Z)\to (U,Z)$,
the curves $\bXp$ over $U$ and $\bXpp$ over $\pri U$,
the closed subschemes $\bZp\subset \bXp$, $\bZpp\subset\bXpp$
and the divisor $\pri\Delta\subset \bXpp$.
To do this we should find a regular function $\delta\in k[\cal Z]$, 
such that $Z(\delta)$ is equal to the subscheme $\Delta_{Z_z}$ that is 
the graph of the composition $Z_z\simeq \pri Z_{\pri z}\to \pri U_{\pri z}=\tXp_{\pri z}\to\tXp$
(i.e. to the diagonal in $Z_z\times_S Z_z$).
Since $\bZp= \pri Z\times_S U\simeq Z\times_S U\hookrightarrow U\times_S U$, the scheme $\bZp$ can be identified with a closed subscheme in $U\times_S U$.
Let $\tilde{\delta}$ denotes the inverse image to $\bZp$ of some section of the line bundle $\mathcal L(\Delta_U)$ on $U\times_S U$ that zero divisor is diagonal.
Since the scheme $\bZp$ is finite over the local scheme $U$, and consequently any line bundle on $\bZp$ is trivial,
the section $\tilde{\delta}$ can be considered as a regular function that gives us the required function $\delta$.

Thus using lemma \ref{lm:et-ex:sur:FinFuncSup} we find a regular function 
$\pri f\in k[\bXp]$ 
that is relatively finite over $U$
and functions 
$f\in k[\bXpp\times\affl]$, 
$f_0,f_1\in k[\bXpp]$
that are relatively finite over $\pri U$,
such that 
\begin{equation}\label{eq:et-ex:comm-diag}
\pi_z^*(f^\prime)=f_0,\, i_0^*(f)=f_0,\, i_1^*(f)=f_1, 
\end{equation}
and such that
\begin{gather}
\pri Z \times_{\pri X} Z(f)\times_{\pri U} (\pri U -\pri Z_{\pri z})=\emptyset
\label{eq:et-ex:sur:Zfpair}\\
Z^\prime \times_{X^\prime} Z(f^\prime) \times_{\pri U} (\pri U-\pri Z_{\pri z})=\emptyset
\label{eq:et-ex:sur:Zppair}\\
Z(f_1)=\pri\Delta\amalg (Z(f_1)-\Delta),\, \pri Z\times_{\pri X} (Z(f_1)-\pri\Delta)=\emptyset
\label{eq:et-ex:sur:Z1}
\end{gather}

Now similarly as in proposition \ref{prop:et-ex:inj:GWCor},
define the base changes 
of the trivialisation $\mu$:
$\mu_{U}\colon \omega_{U}(\bXp)\simeq \cal O(\bXp)$, 
$\mu_{\pri U\times\affl}\colon \omega_{\pri U\times\affl}(\bXp\times\affl) \simeq \cal O(\bXp\times\affl).$
Then we get the first one of three following diagrams, which is the diagram of oriented relative curves with finite functions (see def. \ref{def:OrCurFinFun}).
Next applying construction from proposition \ref{prop:constrOrCurFQCor}
we get quadratic spaces
\begin{multline*}
(k[Z(f^\prime)],q^\prime) = \langle \bXp, \mu_{U}, f^\prime  \rangle \in Q(\cal P(Z(f_0)\to U)),
(k[Z(f_0)],q_0) = \langle \bXpp, \mu_{\pri U}, f_0  \rangle \in Q(Z_1\to X_z),\\
(k[Z],q) = \langle \bXpp, \mu_{\pri U\times\affl}, f  \rangle \in Q(\cal P(Z(f)\to \pri U\times\affl)),\\
(k[Z(f_1)],q_1) = \langle \bXpp, \mu_{\pri U}, f_1  \rangle \in Q(\cal P(Z_1\to X_z)).
\end{multline*}
\begin{equation*}
\xymatrix{
\stackrel{(\mu,  f^\prime ) }{ \bX^\prime } \ar[d] & 
\stackrel{(\mu,  f_0) }{ \bX^{\prime\prime} } \ar[l] \ar[r]_{(0)_\bXpp} \ar[d] & 
\stackrel{(\mu, f) }{ \bXpp\times\affl } \ar[d] & 
\stackrel{(\mu, f_1) }{ \bXpp } \ar[l]^{(1)_\bXpp} \ar[d]
  \\
U & U^\prime \ar[l]\ar[r]^{(0)_U} & U^\prime\times\affl & U^\prime \ar[l]_{(1)_U}
\\
&& X^\prime & 
  \\
\stackrel{ (k[Z^\prime],q_{Z^\prime}) }{ Z^\prime } \ar[urr] \ar[d] & 
\stackrel{ (k[Z_0],q_{Z_0}) }{Z_0}\ar[ur] \ar[d] \ar[r] \ar[l] & 
\stackrel{ (k[Z],q_{Z}) }{Z}\ar[u] \ar[d] & 
\stackrel{ (k[Z_1],q_{Z_1}) }{Z_1}\ar[ul] \ar[d] \ar[l]
  \\
U & U^\prime \ar[l]^{\pi} \ar[r]_{(0)_{U^\prime}} & U^\prime \times\affl & U\ar[l]^{(1)_{{\pri X_{\pri z} }}}
\\
&& (X^\prime,Z^\prime) & 
  \\ \\
(X_z,X_z-Z_z) \ar[uurr]|{\Phi} & 
(X^\prime_{\pri z},X^\prime_{\pri z}-Z^\prime_{\pri z}) \ar[l]^{\pi} \ar[r]_{(0)_{U^\prime}} \ar[uur]|{\Theta_0} & 
(X^\prime_{\pri z},X^\prime_{\pri z}-Z^\prime_{\pri z}) \times\affl \ar[uu]|{\Theta} & 
(X_z,X_z-Z_z) \ar[l]^{(1)_{\pri U }} \ar[uul]|{\Theta_1}_{\sim \text{вложение } U^{\prime}\hookrightarrow X^\prime}
}
\end{equation*}
Next considering compositions with morphisms of the diagram of base changes
\begin{equation}\label{eq:et-ex:sur:basechange}
\xymatrix{
\pri X&\tXp\ar[d]\ar[l]^{\pri v} & \bXp\ar[l]^{\pri e}\ar[d] & \bXpp\ar[l]^{\ppri e}\ar[d] & \bXpp\ar[d]\ar[l]^{pr_{\affl}}\\
&S & U\ar[l] & \pri U\ar[l]^{\pi_z}& \pri U\times\affl \ar[l]
}\end{equation}
and the canonical injections 
$i_{Z(f^\prime)}\colon Z(f)\hookrightarrow \bXp$,
$i_{Z(f)}\colon Z(f)\hookrightarrow \bXpp\times\affl$
we get GW-correspondences,
\begin{gather*}
\Phi = [v^\prime \circ i_{Z(f^\prime)} \circ (k[Z(f^\prime)],q^\prime)] \in 
GWCor( ( U,U-{Z_{ z}} ) , (X^\prime,X^\prime-Z^\prime) ),\\
\Theta = [v \circ i_{Z(f)}\circ (k[Z(f)],q)]\in 
GWCor( ( \pri U,\pri U-{\pri Z_{\pri z}} )\times\affl, (X^\prime,X^\prime-Z^\prime) ),
\end{gather*}
which are GW-correspondences of pairs
by lemma \ref{lm:QPairCor} and equalities \eqref{eq:et-ex:sur:Zfpair} and \eqref{eq:et-ex:sur:Zppair}.
Then we get
\begin{multline*}
\Phi\circ \pi_z = [v^\prime \circ i_{Z(f^\prime)} \circ \langle \bXp, \mu_{U}, f^\prime  \rangle \circ \pi_z] = 
[v^\prime \circ \ppri e \circ i_{Z(f_0)} \circ \langle \bXpp, \mu_{\pri U}, f_0  \rangle] = \\ =
[v^\prime \circ \ppri e \circ i_{Z(f)} \circ \langle \bXpp\times\affl, \mu_{\pri U\times\affl}, f  \rangle \circ i_0] = 
\Theta\circ i_0,
\end{multline*}
\begin{multline*}
\Theta\circ i_1=
[v^\prime \circ \ppri e \circ i_{Z(f)} \circ \langle \bXpp\times\affl, \mu_{\pri U\times\affl}, f  \rangle \circ i_1] = 
[v^\prime \circ \ppri e \circ i_{Z(f_1)} \circ \langle \bXpp, \mu_{\pri U}, f_1  \rangle] = \\ =
[v^\prime \circ i_{\pri\Delta}\circ (k[\Delta^\prime],u^\prime)] + 
[v^\prime \circ i_{Z(f_1)-\pri\Delta}\circ (k[Z(f_1)-\pri\Delta],q_1\big|_{Z(f_1)-\pri\Delta})] = \\ = 
[v \circ i_{\Delta}\circ (k[\Delta],\pri u)]=
[i_{X_z}\circ \langle \pri u\rangle]\in GWCor( (\pri U,\pri U-\pri Z_{\pri z}), (\pri X,\pri X-\pri Z) )
\end{multline*}
where the second and the third equalities follows from lemma \ref{lm:QPairCor} and \eqref{eq:et-ex:sur:Z1}.
Then we replace $\Phi$ and $\Theta$ by $\Phi\circ \langle u^{-1} \rangle$
and $\Theta\circ \langle u^{-1} \rangle$,
where $u\in k[U]^*$ is any invertible regular function such that $u\big|_{Z_z}=u^\prime\big|_{Z^\prime_{\pri z}}$ (we use that $Z^\prime\simeq Z$).

To finish the proof of the proposition it is enough to prove that 
$$[j\circ \langle \pri u/u\rangle ]\stackrel{\affl}{\sim} [j] \in WCor( (\pri X,\pri X-\pri Z) , (V,V-Z) ),$$
where $j\colon (\pri U,\pri U-\pri Z_{\pri z})\to (\pri X,\pri X-\pri Z)$.
Consider affine Zariski neighbourhood of $\pri z$ in $\pri X$ with a lift of the function $u$ to a regular invertible function $\tilde u$, and 
consider two-degree covering $c\colon W=Spec\,k[V][w]/(w^2-\tilde u)\to V$, which is etale in some neighbourhood of $Z$, since $\pri u/u(\pri z)=1$ and $char\,k\neq 2$.
Let's shrink $V$ and $W$ in such way that $c$ becomes etale,
and denote by $\ppri Z\subset\ppri U$ the closed subscheme defined by the ideal $(w-1)$ in $k[\pri Z][w]/(w^2-1)$. 
Then $c\colon (\ppri U,\ppri z) \to (\pri U,\pri z)$ is Nisnevich neighbourhood.

Since the inverse image of $\tilde u$ in $k[W]$ is equal to the square function $w^2$, 
it follows that 
$[\langle \tilde u\rangle\circ id_{(V,V-Z)}\circ d]=
[\langle w^2\rangle\circ id_{(V,V-Z)} ]=
[\langle 1\rangle\circ id_{(V,V-Z)}\circ dc]\in GWCor( (W,W-Z) , (V,V-Z) )$.
By proposition \ref{prop:et-ex:inj:GWCor} there is a GW-correspondence $\Phi\in GCor( (\pri U,\pri U-{\pri Z_{\pri z}}) , (W,W-Z) ) $
left inverse to the class of morphism $c$.
Hence if we denote 
$j^{\pri X}_V\colon (V,V-Z)\to (\pri X,\pri X-\pri Z)$, $j^V_{\pri U}\colon (\pri U,\pri U-\pri Z_{\pri z})\to (V,V-Z)$,
\begin{multline*}
[j  \circ \langle u\rangle\circ id_{(\pri U,\pri U-\pri Z_{\pri z})}]=
[j^{\pri X}_V \circ \langle \tilde u\rangle\circ j^V_{\pri U}]=\\=
[j^{\pri X}_V \circ \langle \tilde u\rangle\circ id_{(V,V-Z)}\circ c\circ \Phi]=
[j^{\pri X}_V \circ \langle 1\rangle\circ id_{(V,V-Z)}\circ c\circ \Phi]=\\=
[j^{\pri X}_V \circ j^V_{\pri U}]= [j]\in WCor( (\pri U,\pri U-{\pri Z_{\pri z}}) , (\pri X,\pri X-\pri Z) )
\end{multline*}

\end{proof}

\begin{proof}[Proof of the theorem \ref{th:et-ex}]
As noted in remark \ref{rm:GWCorFpair},
for homotopy invariant presheave with GW-transfers $\cal F$,
the formula 
$$\begin{array}{ccl}
GWCor^{pair}&\longrightarrow& Ab\\
(Y,U)&\mapsto& \Coker(\cal F(Y)\to \cal F(U))
\end{array}$$
defines the homotopy invariant presheave on the category $GWCor^{pair}$,
and since the injective limit functor is exact,
$$\Coker(\cal F(X_z)\to \cal F(X_z-Z_z))=\varinjlim\limits_{U\colon z\in U\subset X} \Coker(\cal F(U)\to \cal F(U-Z)).$$

Hence the injectivity of the homomorphism $\pi^*$ follows from proposition \ref{prop:et-ex:inj:GWCor}, and
the surjectivity from proposition \ref{prop:et-ex:sur:GWCor}.
\end{proof}

\section{Injectivity}\label{sect:Injth}

\begin{theorem}\label{th:affZar-inj}
Let 
  $\cal F$ is be a homotopy invariant presheave with GW-transfers over field $k$
  and $K$ be a geometric extension $K/k$ (i.e. field of functions of some variety).
Then 
  for any Zariski open subschemes $U\subset V\subset\affl_K$
  the restriction homomorphism   
    $$i^*\colon {\cal F(V)}\to {\cal F(U)}$$
  is injective, 
  where $i\colon U\hookrightarrow V$ denotes the open immersion.  
\end{theorem}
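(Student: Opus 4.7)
The plan is to reduce to the case of removing a single closed point, and then to combine the injectivity theorem on local schemes (Theorem \ref{th:intr:InjTh}) with a geometric construction realizing an $\affl$-homotopy decomposition of the identity along a Zariski cover, in the style of Propositions \ref{prop:affUzar-ex:inj:GWCor}--\ref{prop:affUzar-ex:sur:GWCor}.

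\textbf{Reduction and local vanishing.} Since $V$ is one-dimensional, $V \setminus U$ is a finite set of closed points, so I would first reduce by induction on $|V \setminus U|$ to the case $V \setminus U = \{z\}$ a single closed point. For $a \in \cal F(V)$ with $a|_U = 0$, I would apply Theorem \ref{th:intr:InjTh} to the essentially smooth local scheme $V_z = \Spec \cal O_{V, z}$ with closed subscheme $Z = \{z\}$: the restriction $\cal F(V_z) \hookrightarrow \cal F(V_z - z)$ is injective. Since every Zariski neighborhood $W \subset V$ of $z$ satisfies $W - z \subset U$, the image of $a$ in $\cal F(V_z - z) = \varinjlim_W \cal F(W - z)$ vanishes; hence $a|_{V_z} = 0$, and there is a Zariski neighborhood $W$ of $z$ in $V$ with $a|_W = 0$.

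\textbf{Gluing via a GW-correspondence.} At this point $a|_U = 0$, $a|_W = 0$ and $V = U \cup W$, but $\cal F$ is only a presheaf, so we cannot glue directly. The central step is to construct a GW-correspondence $\Phi \in GWCor(V, U \amalg W)$ such that the composition with the covering $p \colon U \amalg W \to V$ satisfies $p \circ \Phi \stackrel{\affl}{\sim} id_V$. Given such $\Phi$, homotopy invariance of $\cal F$ yields $id_{\cal F(V)} = \cal F(\Phi) \circ \cal F(p)$, and since $\cal F(p)(a) = (a|_U, a|_W) = 0$, we conclude $a = 0$.

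\textbf{Main obstacle.} The hard part is the explicit construction of $\Phi$, which realizes the $\affl$-homotopy decomposition of the identity discussed in the introduction. The plan is to adapt the constructions from Propositions \ref{prop:affUzar-ex:inj:GWCor}--\ref{prop:affUzar-ex:sur:GWCor}: work with the relative projective line $\prl_V \to V$ equipped with a very ample line bundle, and produce suitable sections whose zero loci yield the required quadratic spaces via Proposition \ref{prop:constrOrCurFQCor}. As in the passage from Theorem \ref{th:afflUzar-ex} to Theorem \ref{th:afflzar-ex}, the local base is replaced by the field $K$: instead of using the zero section of the relative affine line, one uses a section whose zero divisor has the form $x \times V$ for a $K$-rational point $x \in V$, which exists since $k$ (and hence $K$) is infinite.
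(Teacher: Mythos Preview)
Your outline is correct but takes an unnecessary detour. The paper proceeds in a single stroke via Proposition~\ref{prop:affZar-inj:GWCor}: for \emph{arbitrary} open $U\subset V\subset\affl_K$ it constructs directly a correspondence $\Phi\in GWCor(V,U)$ with $i\circ\Phi\simA id_V$, with no reduction to a single removed point and no appeal to local injectivity. The construction lives on $\prl_V$: one picks sections $s_0\in\Gamma(\prl_V,\cal O(l))$ and $g\in\Gamma(\prl_V,\cal O(l-1))$ with prescribed restrictions to $\infty_V$, $T=(\affl\setminus V)\times V$, $D=(V\setminus U)\times V$ and the diagonal $\Delta$, so that both $Z(s_0)$ and $Z(g)$ lie in $U\times V$; one then sets $\Phi=[Q_0]-[Q_{Z(g)}]$ as a formal difference of the quadratic spaces coming from Proposition~\ref{prop:constrOrCurFQCor}, and the homotopy $s=(1-t)s_0+t\,\delta g$ gives $i\circ\Phi\simA id_V$ (the subtraction cancels the unwanted component $Z(g)$ that appears at $t=1$). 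The theorem follows at once since $\cal F(\Phi)\circ i^*=id_{\cal F(V)}$.

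Your steps 1--2 are valid, but they do not lighten the essential load: your step~3 --- producing $\Phi\in GWCor(V,U\amalg W)$ with $p\circ\Phi\simA id_V$ --- decomposes (by additivity) as $\Phi=(\Phi_U,\Phi_W)$ with $i_U\circ\Phi_U+i_W\circ\Phi_W\simA id_V$, and building such a partition of unity on $\prl_V$ is the same task as the paper's direct construction of a single left inverse $\Phi\colon V\to U$, only with an extra piece to keep track of. In effect you have replaced one left-inverse construction by another of the same kind, prefaced by a detour through Theorem~\ref{th:intr:InjTh} that buys nothing technically. The paper's route is shorter; yours has the minor conceptual merit of making the Zariski-cover viewpoint explicit, but the geometric work is identical.
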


\begin{proposition}\label{prop:affZar-inj:GWCor}
For 
a morphism $i\colon U\hookrightarrow V$ 
satisfying the assumptions of theorem \ref{th:affZar-inj}
there is
a morphism $\Phi\in GWCor( V , U )$
such that $$i \circ \Phi \stackrel{\affl}{\sim} id_{V} \in GWCor( V , V ).$$

\end{proposition}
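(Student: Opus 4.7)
The plan is to imitate the construction in Proposition \ref{prop:affUzar-ex:inj:GWCor}, with two adaptations: the ``base'' is the field $K$ rather than a local scheme, and since we are not in the pair category, the leftover piece that appears at $t=1$ of the homotopy must be absorbed back into $\Phi$ rather than discarded.

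Since $K$ is infinite and $\affl_K\setminus U$ is a finite set of closed points, I first fix a $K$-rational point $v_0\in U$. On the relative projective line $\prl_V=V\times_K\prl_K$, I introduce divisors $T=V\times\{\infty\}$, $Z=V\times\{v_0\}$, $B=V\times(\affl_K\setminus V)$, $D=V\times(\prl_K\setminus U)\supset T\cup B$, and $\Delta$ the graph of $V\hookrightarrow\prl_K$, noting $Z\cap D=\emptyset$, $\Delta\cap B=\emptyset$, and $\Delta\cap(V\times S)=\{(s,s)\colon s\in S\}$ with $S=V\setminus U$. Fix sections $\mu,\nu,\delta\in\Gamma(\prl_V,\cal O(1))$ with zero divisors $T,Z,\Delta$ respectively, normalised so that $\nu|_T=\delta|_T$. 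By Lemma \ref{lm:corSerreth}, for $l$ large enough I pick $s_0\in\Gamma(\prl_V,\cal O(l))$ with $s_0|_D=\nu^l|_D$, and $g\in\Gamma(\prl_V,\cal O(l-1))$ with $g|_\Delta=\mu^{l-1}|_\Delta$, $g|_T=\nu^{l-1}|_T$, $g|_B=\nu^l/\delta|_B$, and $g|_{V\times S}=\mu^{l-1}|_{V\times S}$; these conditions are pairwise compatible on all overlaps, in particular on the finite set $\{(s,s)\colon s\in S\}$ where both prescriptions evaluate to $\mu^{l-1}(s,s)$. Put $s=(1-t)s_0+t\,\delta g\in\Gamma(\prl_V\times\affl^1,\cal O(l))$.

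By construction $s_0|_D$ is invertible, so $Z(s_0)\subset V\times U$; the choices of $g|_B$ and of the normalisation $\nu|_T=\delta|_T$ force $s|_{(T\cup B)\times\affl^1}$ to be the constant invertible section $\nu^l|_{T\cup B}$, hence $Z(s)\subset V\times V\times\affl^1$; and since $g|_D$ is invertible, $Z(g)\subset V\times U$ and $Z(g)\cap\Delta=\emptyset$. Lemma \ref{lm:sect-finfunct} yields that $s_0/\mu^l$ and $s/\mu^l$ are relatively finite over $V$ and $V\times\affl^1$ respectively, so Proposition \ref{prop:constrOrCurFQCor} applied with the canonical orientation $dy$ of $\affl_V/V$ produces quadratic spaces $Q_0=\langle dy,s_0/\mu^l\rangle$, $Q=\langle dy,s/\mu^l\rangle$ and correspondences $\widetilde\Phi=[i_{Z(s_0)}\circ Q_0]\in GWCor(V,U)$ and $\widetilde\Theta=[i_{Z(s)}\circ Q]\in GWCor(V\times\affl^1,V)$. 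Direct computation from $s|_{t=0}=s_0$ gives $\widetilde\Theta\circ i_0=i\circ\widetilde\Phi$, while $s|_{t=1}=\delta g$ and $Z(\delta g)=\Delta\amalg Z(g)$ give $\widetilde\Theta\circ i_1=[(k[\Delta],u)]+[(k[Z(g)],q_g)]$ for some unit $u\in K[V]^*$. Because $Z(g)\subset V\times U$, the second summand factors as $i\circ\Psi_{extra}$ for some $\Psi_{extra}\in GWCor(V,U)$; setting $\Phi=(\widetilde\Phi-\Psi_{extra})\circ\langle u^{-1}\rangle$ and $\Theta=(\widetilde\Theta-\pi^*(i\circ\Psi_{extra}))\circ\langle u^{-1}\rangle$ with $\pi\colon V\times\affl^1\to V$, we obtain $\Theta\circ i_0=i\circ\Phi$ and $\Theta\circ i_1=id_V$, i.e.\ $i\circ\Phi\simA id_V$.

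The principal difference from Proposition \ref{prop:affUzar-ex:inj:GWCor} is that without the pair structure the residual piece $[(k[Z(g)],q_g)]$ cannot be silently discarded; the construction has to be fine-tuned so that $Z(g)\subset V\times U$, which in turn allows this contribution to be factored through $i$ and absorbed into $\Phi$. The main technical point is therefore imposing the two simultaneous prescriptions $g|_\Delta=\mu^{l-1}|_\Delta$ and $g|_{V\times S}=\mu^{l-1}|_{V\times S}$ and checking their compatibility at the finite overlap $\Delta\cap(V\times S)$; after that the remaining steps are routine bookkeeping mirroring the proof of Proposition \ref{prop:affUzar-ex:inj:GWCor}.
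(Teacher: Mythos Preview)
Your proof is correct and essentially the same as the paper's: both construct sections $s_0$, $g$ and the linear interpolation $s=(1-t)s_0+t\,\delta g$ on $\prl_V$, extract quadratic spaces via Proposition~\ref{prop:constrOrCurFQCor}, and subtract off the residual $Z(g)$-piece (your $\Psi_{extra}$, the paper's $[i_{Z(g)}\circ Q_{Z(g)}]$) so that $i\circ\Phi\simA id_V$ after a final twist by $\langle u^{-1}\rangle$. The only difference is cosmetic: the paper anchors $\nu$ at $0\in\affl_K$ and prescribes $s_0|_T=\delta\mu^{l-1}$, $s_0|_D=\mu^l$, $g|_{T\amalg D}=\mu^{l-1}$ (in its notation), thereby avoiding any need for a $K$-rational point of $U$.
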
\begin{proof}
Lemma \ref{lm:GWCor:locbase.} implies that is is enough to consider the case $K=k$. 
%
Denote divisors on relative projective line
$
\infty_V = \prl_V\setminus \affl_V,\;
T = \affl_{V} \setminus V\times V\subset \prl_V,\;
D = V\times V \setminus U\times V \subset \prl_V,\;
\Delta = \Gamma(V\hookrightarrow \prl)
$,
i.e. $\Delta$ is diagonal in $V\times V$,
and let's fix sections 
$$
\mu,\nu,\delta\in\Gamma(\prl_{V},\cal L(1))\colon
div_0\,\mu = \infty_V ,\; 
div_0\,\nu = 0_V,\;
div_0\,\delta = \Delta,\;
\nu\big|_{\infty_V}=\delta\big|_{\infty_V}
.$$ 

By lemma \ref{lm:corSerreth}
for a sufficiently large $l$ there are sections
$$\begin{aligned}
s_0\in \Gamma( \prl_V, \cal O(l) ),\;&
s_0\big|_{\infty_V} = \nu^{l},\;
s_0\big|_{T} = \delta \mu^{l-1},\;
s_0\big|_{D} = \mu^{l},\;
\\
g\in \Gamma( \prl_V, \cal O(l-1) ),\;&
g\big|_{\infty_V} = \nu^{l-1},\;
g\big|_{T\amalg D} = \mu^{l-1},\;
g\big|_{\Delta} = \mu^{l-1}
.\end{aligned}$$

Next define $s = s_0(1-t) + \delta g t\in \Gamma(\prl_V\times\affl,\cal O(l) )$,
then 
$
s\big|_{\infty_V\times\affl} = \nu^l,\; s\big|_{T\times\affl} = \delta\mu^{l-1}
$.

By lemma \ref{lm:sect-finfunct} the functions 
$s_0/\mu^l\in k[\affl_{V}]$ and $s/\mu^l\in k[\affl_{V\times\affl}]$
are relatively finite. 
So we can apply the construction from proposition \ref{prop:constrOrCurFQCor} and put
\begin{multline*}
Q_0=\langle dy,s_0/\mu^l\rangle=(k[Z_0],q_0)\in Q(\cal P(Z_0\to V)),\\
Q=\langle dy,s/\mu^l\rangle=(k[Z],q)\in Q(\cal P(Z\to V\times\affl))\\
Q_1=\langle dy,\delta g/\mu^l\rangle=(k[Z_1],q)\in Q(\cal P(Z_1\to V))
,\end{multline*}
where $$Z_0 = Z(s_0)\subset \affl_{V},\quad Z = Z(s)\subset \affl_{V\times\affl}, Z_1 = Z(\delta g)\subset \affl_{V},$$
and $dy$ denotes the differential on the relative affine line defined by the coordinate function.

Since 
$V\cap T=\infty$, it follows that  
$s\big|_{T\times\affl}=\delta\mu^{l-1}\big|_{T\times\affl}$ is invertible,
and
$s_0\big|_{T\amalg D}=\delta\mu^{l-1}\big|_{T}\amalg \mu^{l}\big|_{D}$ is invertible.
Hence 
we get injections
$$
i_{Z_0}\colon Z_0\to U\times V,\;
i_{Z}\colon Z\hookrightarrow V\times V\times\affl
.$$
Since
$g\big|_{\Delta}=\mu^{l-1}$, it follows that 
$Z(\delta g)=\Delta\amalg Z(g)$,
and consequently there is a splitting of the quadratic space 
$$Q_1=Q_{\Delta}\oplus Q_{Z(g)},\; Q_{\Delta} = (k[\Delta],u),\, u\in k[\Delta]^*,\; Q_{Z(g)} = (k[Z(g)],q_{Z(g)}).$$
Since $g\big|_{D}=\mu^{l-1}$, it follows that $Z(g)\subset U\times V$
and
there is an injection $i_{Z(g)}\colon Z(g)\hookrightarrow U\times V$.

Hence
quadratic spaces 
$i_{Z_0} \circ Q_0$, $i_{Z(g)}\circ Q_{Z(g)}$ and $i_Z\circ Q$
define GW-correspondences
\begin{gather*}
\widetilde{\Phi}=[i_{Z_0} \circ Q_0]-[i_{Z(g)} \circ Q_{Z(g)}]\in GWCor( V, U ),\\
\widetilde{\Theta}=[i_Z\circ Q] - [i\circ i_{Z(g)}\circ Q_{Z(g)}\circ pr] \in GWCor( V\times\affl, V )
,\end{gather*}
where $pr\colon V\times \affl\to \affl$.

Then
\begin{multline*}
i \circ \widetilde{\Phi}=
[i\circ i_{Z_0}\circ \langle dy,s_0/\mu^l \rangle] - [i\circ i_{Z(g)} \circ Q_{Z(g)}]=
[i\circ i_{Z_0}\circ i_0^*(\langle dy,s/\mu^l \rangle)] - \\ [i\circ i_{Z(g)} \circ Q_{Z(g)}]=
[i_{Z}\circ \langle dy, s/\mu^l \rangle\circ i_0] - [i\circ i_{Z(g)} \circ Q_{Z(g)}\circ pr\circ i_0]=
\widetilde{\Theta}\circ i_0
.\end{multline*}
On other side 
$$
\widetilde{\Theta}\circ i_1=
[i_{Z_1}\circ Q_1]  - [i\circ i_{Z(g)} \circ Q_{Z(g)}]=
[(k[\Delta],u)]
,$$
Thus if we put 
$$
\Phi=\widetilde{\Phi}\circ [\langle u^{-1}\rangle],\;
\Theta=\widetilde{\Theta}\circ [\langle u^{-1}\rangle],\;
,$$
then 
$\Theta\circ i_0=i \circ \Phi, \Theta\circ i_1 = [(k[\Delta],1)]=id_{V},$
and so $i \circ \Phi\stackrel{\affl}{\sim} =id_{V}$. 

\end{proof}

\begin{theorem}\label{th:locZar-inj}
For any essential smooth local scheme $U$,
and a closed subscheme $Z\subset U$,
the restriction homomorphism
    $$i^*\colon {\cal F(V)}\to {\cal F(U)}$$
is injective.
\end{theorem}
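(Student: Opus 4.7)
The plan is to construct a GW-correspondence $\Phi\in GWCor(U,U\setminus Z)$ satisfying $i\circ\Phi\stackrel{\affl}{\sim}\id_U$, where $i\colon U\setminus Z\hookrightarrow U$ is the open immersion. Applying $(-)^*$ and using the homotopy invariance of $\cal F$, the map $\Phi^*$ becomes a left inverse to $i^*\colon\cal F(U)\to\cal F(U\setminus Z)$, yielding the injectivity. The construction mirrors that of Proposition~\ref{prop:affZar-inj:GWCor}, with the relative projective line $\prl_V/V$ replaced by a good compactification of a relative curve $\cal X/B$ produced by the Quillen trick.

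First I reduce to the codimension-one case: since $U$ is regular, any proper closed $Z\subsetneq U$ is contained in a principal divisor $W=V(f)$, and the inclusion $U\setminus W\subset U\setminus Z$ shows that injectivity of $\cal F(U)\to\cal F(U\setminus W)$ implies the same for $\cal F(U)\to\cal F(U\setminus Z)$. Writing $U=T_t$ for a smooth variety $T$ with closed point $t$ (after shrinking), I fix an etale presentation $e\colon T\to\affl^d_k$ and apply Lemma~\ref{lm:relCur:comp:projection} to find a projection $p\colon\affl^d\to\affl^{d-1}$ under which the closure of $e(Z)$ is finite. Setting $B=(\affl^{d-1})_{p(e(t))}$, the base change $\cal X=T\times_{\affl^{d-1}}B$ is a smooth relative curve over $B$ containing $U$ as a localisation and the extension $\cal Z\subset\cal X$ of $Z$, with $\cal Z$ finite over $B$. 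Lemma~\ref{lm:et-ex:relCures} yields a good compactification $\ovbX/B$ with very ample $\cal O(1)$ and a trivialisation of the relative canonical class.

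On $\ovbX/B$ I run the construction of Proposition~\ref{prop:affZar-inj:GWCor} in relative form. Let $D=\ovbX\setminus\cal X$ and let $\Delta$ be the ``diagonal'' divisor in $\ovbX\times_B U$ corresponding to the canonical map $U\to\cal X$, cut out by a section $\delta$ of an appropriate line bundle. Via Lemma~\ref{lm:corSerreth} and Lemma~\ref{lm:sect_inj,nonmut}, I choose (for $l\gg 0$) a section $s_0\in\Gamma(\ovbX,\cal O(l))$ with $s_0|_D$ invertible, $s_0|_{\cal Z}=\delta g|_{\cal Z}$, and $Z(s_0)$ a reduced divisor finite over $B$, lying inside $\cal X\setminus\cal Z$ and injecting to $\cal X$; together with a section $g\in\Gamma(\ovbX,\cal L(-\Delta)(l))$ with $g|_{D\cup\cal Z\cup\Delta}$ invertible. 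Setting $s=(1-t)s_0+t\delta g$ and using Lemma~\ref{lm:sect-finfunct} for relative finiteness, Proposition~\ref{prop:constrOrCurFQCor} turns $s_0/d^l$ and $s/d^l$ into quadratic spaces. By Lemma~\ref{lm:QPairCor} these assemble into GW-correspondences $\widetilde\Phi\in GWCor(U,U\setminus Z)$ and $\widetilde\Theta\in GWCor(U\times\affl,U)$ with $\widetilde\Theta\circ i_0=i\circ\widetilde\Phi$; at $t=1$, the splitting $Z(\delta g)=\Delta\amalg Z(g)$ with $Z(g)\subset\cal X\setminus\cal Z$ gives $\widetilde\Theta\circ i_1=[(k[\Delta],u)]$ for some unit $u\in k[U]^*$. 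Twisting $\widetilde\Phi,\widetilde\Theta$ by $\langle u^{-1}\rangle$ produces the required $\Phi$ and the $\affl$-homotopy $\Theta$ with $\Theta\circ i_1=\id_U$.

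The main obstacle is the simultaneous realisation of the geometric conditions on $s_0$ and $g$ over $\ovbX$: invertibility on the various subschemes, matching of restrictions on $\cal Z$, and reducedness, finiteness over $B$, containment in $\cal X\setminus\cal Z$ and injection into $\cal X$ for $Z(s_0)$. These are the direct analogues of the conditions verified in Lemmas~\ref{lm:et-ex:Zsect} and~\ref{lm:et-ex:sur:FinFuncSup}, and their joint achievement rests on Serre vanishing (Lemma~\ref{lm:corSerreth}), Lemma~\ref{lm:sect_inj,nonmut}, and the freedom provided by choosing $l$ sufficiently large. Once the sections are fixed, the subsequent quadratic-space manipulation is formally identical to that of Proposition~\ref{prop:affZar-inj:GWCor}.
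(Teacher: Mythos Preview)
Your overall architecture matches the paper's Proposition~\ref{prop:locZar-inj:GWCor}: Quillen trick to obtain a relative curve $\cal X/U$ with good compactification, choice of sections $s_0$ and $g$ with controlled zero loci, linear interpolation $s=(1-t)s_0+t\,\delta g$, and quadratic spaces via Proposition~\ref{prop:constrOrCurFQCor}. The reduction to codimension one and the extra properties you extract from Lemma~\ref{lm:sect_inj,nonmut} are harmless but unnecessary; the paper makes do with the bare Serre-vanishing Lemma~\ref{lm:corSerreth}.

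There is, however, a genuine gap in your computation of $\widetilde\Theta\circ i_1$. You invoke Lemma~\ref{lm:QPairCor} to conclude that the $Z(g)$ summand disappears because $Z(g)\subset\cal X\setminus\cal Z$, but that lemma concerns the \emph{pair} category $GWCor^{pair}$, whereas here you are working in ordinary $GWCor(U,X)$. In the absolute category the class $[i_{Z(g)}\circ Q_{Z(g)}]$ is a perfectly nonzero correspondence, so as written $\widetilde\Theta\circ i_1=[(k[\Delta],u)]+[i_{Z(g)}\circ Q_{Z(g)}]$, not just $[(k[\Delta],u)]$. The remedy --- and this is precisely what Proposition~\ref{prop:affZar-inj:GWCor} does, and what the paper repeats in Proposition~\ref{prop:locZar-inj:GWCor} --- is to \emph{define} the correspondences as differences:
\[
\widetilde\Phi=[i_{Z_0}\circ Q_0]-[i_{Z(g)}\circ Q_{Z(g)}],\qquad
\widetilde\Theta=[i_{Z}\circ Q]-[i_{Z(g)}\circ Q_{Z(g)}\circ pr],
\]
where $pr\colon U\times\affl\to U$ is the projection. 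The subtraction is legitimate because $Z(g)\subset\cal X\setminus\cal Z$ ensures both terms of $\widetilde\Phi$ land in $X\setminus Z$, and the constant family cancels the unwanted summand at $t=1$. Once you insert this subtraction, the rest of your argument goes through and agrees with the paper's proof.
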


\begin{proposition}\label{prop:locZar-inj:GWCor}
For any smooth variety $X$, a point $z\in X$, and a closed subscheme $Z\subset X$,
there is a
morphism $\Phi\in GWCor( X_x , X-Z )$,
such that $j \circ \Phi \stackrel{\affl}{\sim} i_z \in GWCor( (X,Z)_z , (X,Z) ) $,
where $i_z\colon X_z\hookrightarrow X $ 
and $j\colon X-Z\hookrightarrow X$.

\end{proposition}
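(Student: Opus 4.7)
The plan is to adapt the construction used in Proposition \ref{prop:affZar-inj:GWCor} to the higher-dimensional setting, reducing to a relative curve via Quillen's trick, much as in the proof of Proposition \ref{prop:et-ex:inj:GWCor}. By lemma \ref{lm:GWCor:locbase.} we may pass to the local scheme $X_z$ as a limit of Zariski neighbourhoods, and it suffices to produce the GW-correspondence $\Phi$ together with a homotopy $\Theta \in GWCor(X_z \times \affl, X)$ such that $\Theta \circ i_0 = j \circ \Phi$ and $\Theta \circ i_1 = i_z$ (up to a unit twist handled at the end).

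First I would shrink $X$ to an affine Zariski neighbourhood of $z$ with trivial canonical class and choose an étale morphism $e\colon X\to \affl^d$. Applying lemma \ref{lm:relCur:comp:projection} to a closure $\overline X$ together with the closed subsets containing the boundary, $\overline Z$, and the boundary's intersection with $\overline Z$, I obtain a linear projection $\affl^d \to \affl^{d-1}$ such that both $\overline X \setminus X$ and $\overline Z$ become finite over $\affl^{d-1}$, and such that the fibre through $e(z)$ avoids the codimension $\geq 2$ bad locus. After base change to $U = \affl^{d-1}_{e(z)}$, I get a smooth relative curve $\mathcal{X}\to U$, a good projective compactification $\overline{\mathcal{X}}$ with very ample $\mathcal{O}(1)$ and boundary section $d$ as in definition \ref{def:compact}, with $\mathcal{Z} := Z \times_{\affl^{d-1}} U$ finite over $U$, and with the relative canonical class trivialised (copying the argument of lemma \ref{lm:et-ex:relCures}). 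Let $\Delta \subset \overline{\mathcal{X}}$ denote the graph of the canonical section $U \to \mathcal{X}$ through $z$.

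Next, in direct analogy with Proposition \ref{prop:affZar-inj:GWCor}, I would fix sections $\mu,\nu,\delta \in \Gamma(\overline{\mathcal{X}},\mathcal{O}(1))$ with zero divisors $T = \overline{\mathcal{X}} \setminus \mathcal{X}$, a chosen relative zero locus, and $\Delta$ respectively, agreeing suitably on $T$. Then apply lemma \ref{lm:corSerreth} for large $l$ to produce sections
\begin{gather*}
s_0 \in \Gamma(\overline{\mathcal{X}}, \mathcal{O}(l)),\quad s_0\big|_T = \nu^l,\ s_0\big|_{\mathcal{Z}} = \mu^l,\\
g \in \Gamma(\overline{\mathcal{X}}, \mathcal{O}(l-1)),\quad g\big|_T = \nu^{l-1},\ g\big|_{\mathcal{Z}} = \mu^{l-1},\ g\big|_\Delta = \mu^{l-1},
\end{gather*}
and set $s = (1-t) s_0 + t\, \delta g \in \Gamma(\overline{\mathcal{X}} \times \affl, \mathcal{O}(l))$. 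Lemma \ref{lm:sect-finfunct} then gives relative finiteness of $s_0/\mu^l$ and $s/\mu^l$, so proposition \ref{prop:constrOrCurFQCor} furnishes quadratic spaces $Q_0$, $Q$, $Q_1 := \langle dy, \delta g/\mu^l\rangle$ on $Z(s_0)$, $Z(s)$, $Z(\delta g)$. By construction $Z(s_0) \subset \mathcal{X} \setminus \mathcal{Z}$ (since $s_0|_{\mathcal{Z}}$ is invertible), and $Z(\delta g) = \Delta \amalg Z(g)$ with $Z(g) \subset \mathcal{X} \setminus \mathcal{Z}$ since $g|_\mathcal{Z}$ is invertible.

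Finally, pushing forward these quadratic spaces along the canonical injections and the base change $\mathcal{X} \to X$, and setting
\[
\widetilde{\Phi} = [Z(s_0) \hookrightarrow X - Z \circ Q_0] - [Z(g) \hookrightarrow X - Z \circ Q_1|_{Z(g)}],\quad
\widetilde{\Theta} = [Z(s) \hookrightarrow X \circ Q] - [Z(g) \hookrightarrow X \circ Q_1|_{Z(g)} \circ pr_{X_z}],
\]
a direct verification (exactly as in the last paragraph of the proof of Proposition \ref{prop:affZar-inj:GWCor}) gives $\widetilde{\Theta} \circ i_0 = j \circ \widetilde{\Phi}$ and $\widetilde{\Theta} \circ i_1 = [(k[\Delta], u)]$ for some unit $u \in k[\Delta]^* = k[X_z]^*$. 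Twisting by $\langle u^{-1}\rangle$ yields the required $\Phi$ and the $\affl$-homotopy.

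The main obstacle is the geometric setup: one must arrange the projection so that $\mathcal{Z}$ is simultaneously finite over $U$ and disjoint from the boundary over the closed fibre, and so that the interpolating divisor $Z(s)$ stays inside the affine part $\mathcal{X} \times \affl$ throughout the homotopy. All of this is a routine adaptation of the compactification/Serre-vanishing argument already carried out in lemmas \ref{lm:et-ex:relCures}, \ref{lm:partcasecorSerreth}, and \ref{lm:sect-finfunct}, so no new geometric construction beyond what appears in sections \ref{sect:QuiltrCompact}--\ref{sect:FunctionsConstr} is needed.
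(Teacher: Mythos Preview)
Your overall strategy coincides with the paper's: reduce to a smooth relative curve over a local base via Quillen's trick (as in lemma \ref{lm:et-ex:relCures}), then build an interpolating family $s = (1-t)s_0 + t\,\delta g$ whose zero loci furnish $\widetilde\Phi,\widetilde\Theta$ via proposition \ref{prop:constrOrCurFQCor}, with the subtraction of $Q_1|_{Z(g)}$ isolating the diagonal summand at $t=1$.

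There is, however, one genuine gap. You fix $\mu,\nu,\delta \in \Gamma(\overline{\cal X},\cal O(1))$ with $\divisor_0\,\delta = \Delta$, copying the $\prl$-situation of proposition \ref{prop:affZar-inj:GWCor}. But on the compactified relative curve $\overline{\cal X}$ the very ample sheaf $\cal O(1)$ has relative degree strictly greater than $1$ in general (the construction of lemma \ref{lm:et-ex:relCures} already replaces $\cal O(1)$ by a high power), whereas the section $\Delta$ has relative degree $1$; so no global section of $\cal O(1)$ can have zero divisor exactly $\Delta$. The paper avoids this by working with $\cal L(\Delta)$ directly: one takes $\delta\in\Gamma(\overline{\cal X},\cal L(\Delta))$ with $\divisor\,\delta=\Delta$ and chooses
\[
g\in\Gamma\bigl(\overline{\cal X},\,\cal L(\Delta)^{-1}\otimes\cal O(l)\bigr),\qquad Z(g)\cap(D\cup\Delta\cup\cal Z)=\emptyset,
\]
so that $\delta g\in\cal O(l)$, and then $s_0\in\Gamma(\overline{\cal X},\cal O(l))$ with $s_0|_D=\delta g|_D$ and $Z(s_0)\cap\cal Z=\emptyset$. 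With this change the auxiliary section $\nu$ (your ``chosen relative zero locus'') becomes unnecessary, and the rest of your argument goes through verbatim.

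A smaller point: the base should be $U=X_z$ rather than $\aff^{d-1}_{pr(e(z))}$, so that the graph of the canonical morphism $X_z\hookrightarrow X$ really gives a section $\Delta\subset \cal X = X_z\times_{\aff^{d-1}} X$. Over $\aff^{d-1}_{pr(e(z))}$ there is no canonical section through $z$; the paper makes this further base change explicitly.
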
\begin{proof}
In the same way as in lemma \ref{lm:et-ex:relCures}, 
we change $X$ to a relative smooth curve with good compactification. 
Let's repeat this construction in this situation:
Firstly, shrink $X$ in such way that canonical classe of $X$ becomes trivial.
Next, consider decomposition
$X\xrightarrow{u} \overline{X}\xrightarrow{p} \aff^d,$
of the etale morphism $e\colon X\to \aff^d$ ($d=dim\,X$),
where $u$ is dense open immersion and $p$ is finite, given by proposition \ref{prop:ZarMianTh}. 
Then using lemma \ref{lm:relCur:comp:projection}
find projection $pr\colon \aff^d\to \aff^{d-1}$
such that the restrictions $p(\overline X\setminus X)\to \aff^{d-1}$ and $p(Z)\to \aff^{d-1}$ are finite.

Now using the base change
along the projection 
$U=X_z\to \aff^{d-1}$
we get the curve $\ovbX=U\times_{\aff^{d-1}} \overline X$ with the finite morphism
$\pi\colon \ovbX\to \affl_U$
and the smooth open subscheme $\cal X=U\times_{\aff^{d-1}} X$
such that $p$ is etale on $\cal X$ 
and $\ovbX\setminus \cal X$ is finite over $U$.

Next we consider immersion $\affl_U\hookrightarrow \prl_U$
and again applying Zariski main theorem (proposition \ref{prop:ZarMianTh}) 
we get decomposition 
$\ovbX\xrightarrow{j} \overline{\ovbX}\xrightarrow{\overline{\pi}} \prl_U$
$$\xymatrix{
\affl_U\ar@{^(->}[d] &\cal X \ar[l]^{\pi}\ar@{^(->}[d]^{j}\ar[r]^{ev}& X\\
\prl_U &\overline{\cal X}\ar[l]^{\overline{\pi}} & 
}$$

Let $\Delta_z = \Gamma(z\to X)\in \cal X$ be a closed point (the diagonal in $z\times_{\aff^d} z$). 
Since $\overline{\pi}$ is finite, then $\cal O(1)$ is an ample bundle on $\overline{\ovbX}$.
Using lemma \ref{lm:corSerreth} and replacing $\cal O(1)$ by a some power we can assume 
that there is 
$d\in \Gamma(\prl_U,\cal O(1))\colon Z(d)\supset \ovbX\setminus \cal X, Z(d)\not\ni z$,
and let's redenote $\cal X=\overline{\ovbX}-Z(d)$. And let's redenote $\overline{\ovbX}$ by $\ovbX$.

Now consider the closed subschemes $\Delta, \cal Z\subset \ovbX$:
$$
U\simeq \Delta = \Gamma(U\to X)  ,\;
\cal Z = { U\times_{\aff^{d-1}} \overline Z }
.$$
($\cal Z$ is a closed subscheme in $\ovbX$, since $\overline Z$ is finite over $U$)

By lemma \ref{lm:corSerreth} for some sufficiently large $l$, there are sections
$$\begin{aligned}
g\in \Gamma(\ovbX, \cal L(\Delta)^{-1}\otimes \cal O(l))\colon\;&
Z(g)\cap (D\cup \Delta\cup \cal Z) = \emptyset,\\
s_0\in \Gamma(\ovbX, \cal \cal O(l))\colon\;&
s_0\big|_{D} = \delta g,\;
Z(s_0)\cap {\cal Z}=\emptyset
.\end{aligned}$$
Define $s = s_0(1-t) + \delta g t \in \Gamma(\ovbX\times\affl,\cal O(l))$,
\begin{gather*}
i_{Z_0}\colon Z_0=Z(s_0)\to \cal X - \cal Z,\; 
i_{Z(g)}\colon Z(g)\to \cal X - \cal Z,\; 
i_{\Delta}\colon \Delta\to \cal X,\\
Q_0=\langle s_0/d^l \rangle \in Q(\cal P(Z_0\to U)), 
Q=\langle s/d^l \rangle \in Q(\cal P(Z\to U)), 
Q_1=\langle \delta g/d^l \rangle \in Q(\cal P(Z(\delta g)\to U)), 
\end{gather*}
Since $Z(\delta g)=\Delta \amalg Z(g)$, it follows that
$$Q_1 = (k[\Delta],u)\oplus Q_{Z(g)},\; Q_{Z(g)} = (k[Z(g)],q_{Z(g)}).$$

Define
\begin{multline*}
\tilde{\Phi} = [ev\circ i_{Z_0}\circ Q_0] - [ev\circ i_{Z(g)}\circ Q_{Z(g)}] \in GWCor(U,X-Z),\\
\tilde{\Tilde} = [ev\circ i_{Z}\circ Q] - [ev\circ i_{Z(g)}\circ Q_{Z(g)}\circ pr]\in GWCor(U,X),
\end{multline*}
where $pr\colon U\times \affl\to U$.

Then
$$\tilde{\Theta}\circ i_0 = \tilde{\Phi}, \tilde{\Theta}\circ i_1 = [(k[\Delta], u)] = [i\circ \langle u\rangle],$$
where $i_0,i_1\colon U\to U\times \affl$ denotes zero and unit sections.

Hence the morphisms 
$\Phi=\tilde\Phi\circ [\langle u^{-1}\rangle]$, 
$\Tilde=\tilde\Tilde\circ [\langle u^{-1}\rangle]$
give the required 'right inverse' GW-correspondence up to a homotopy.
\end{proof}


\section{Strictly homotopy invariance of associated sheave}\label{sect:StrHomInv}


\begin{theorem}\label{th:affcoh}
Let $\cal F$ be a homotopy invatiant presheave with GW-transfers,
then
$$\widetilde{\cal F}_{Nis}\big|_{\affl_K}\simeq \cal F\big|_{\affl_K},\;
h^i_{Nis}(\widetilde{\cal F}_{Nis})\big|_{\affl_K}\simeq 0, \forall i>0
,$$
for any geometric extension $K/k$.
\end{theorem}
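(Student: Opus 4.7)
The strategy is to use a Cousin-type complex on $\affl_K$, exploiting $\dim \affl_K = 1$. Concretely, for every Zariski open $V \subseteq \affl_K$ I aim to verify that
\begin{equation*}
0 \longrightarrow \widetilde{\cal F}_{Nis}\big|_V \longrightarrow (i_\eta)_* \cal F(\eta) \longrightarrow \bigoplus_{x \in V^{(1)}} (i_x)_* \frac{\cal F(\affl_K - x)}{\cal F(\affl_K)} \longrightarrow 0
\end{equation*}
is a flasque resolution of $\widetilde{\cal F}_{Nis}\big|_V$, where $\eta$ is the generic point of $\affl_K$ and $V^{(1)}$ is the set of closed points of $V$. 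The identification of the stalk at a closed point $x$ with the global quotient $\cal F(\affl_K - x)/\cal F(\affl_K)$ uses etale excision (Theorem~\ref{th:et-ex}) to pass from Nisnevich to Zariski stalks, Theorem~\ref{th:locZar-inj} for injectivity on the Zariski local stalks, and the Zariski excision of Theorem~\ref{th:afflzar-ex}/Corollary~\ref{cor:afflzar-ex} to replace the local quotient by the global one. Once this resolution is in place, the two claims of the theorem become cohomology statements about the two-term complex of global sections $\cal F(\eta) \xrightarrow{d} \bigoplus_{x \in V^{(1)}} \cal F(\affl_K - x)/\cal F(\affl_K)$.

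Exactness of the resolution at the middle term (Gersten-type surjectivity onto stalks) is essentially automatic, since each stalk is realised as the quotient of a subgroup of $\cal F(\eta)$ via Theorem~\ref{th:affZar-inj}. For surjectivity of the global differential $d$, I take a finitely supported class $(\alpha_{x_1}, \dots, \alpha_{x_n})$, lift each $\alpha_{x_i}$ to a representative $\tilde{\alpha}_i \in \cal F(\affl_K - x_i)$ via Theorem~\ref{th:afflzar-ex}, and form $f = \sum_i \tilde{\alpha}_i \in \cal F(\eta)$. For each $j$, every summand $\tilde{\alpha}_i$ with $i \neq j$ restricts from $\cal F(\affl_K - x_i)$ to $\cal F((\affl_K)_{x_j})$ and hence vanishes in the $x_j$-component; consequently $d(f)_{x_j} = \alpha_{x_j}$. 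This gives $H^1_{Nis}(V, \widetilde{\cal F}_{Nis}) = 0$, and the vanishing of $H^i$ for $i \ge 2$ is free from $\dim V = 1$.

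It remains to identify $\widetilde{\cal F}_{Nis}(V) = \ker d$ with $\cal F(V)$. The inclusion $\cal F(V) \hookrightarrow \ker d$ is immediate from Theorem~\ref{th:affZar-inj}. The reverse inclusion -- showing that every $f \in \cal F(\eta)$ which is Zariski-regular at each point of $V$ arises from a section of the presheaf $\cal F(V)$ itself (and not merely of its sheafification) -- is the main obstacle. I would argue inductively on the finite set of points at which $f$ fails a priori to be globally regular: Theorem~\ref{th:afflzar-ex} supplies at each stage a correction by an element of $\cal F(\affl_K)$ that absorbs the obstruction at one such point, and Theorem~\ref{th:affZar-inj} guarantees uniqueness and that later corrections do not spoil the regularity already achieved. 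The delicate combinatorial input here -- the simultaneous compatibility of these corrections across the remaining bad points -- is exactly what the explicit GW-correspondence constructions of Sections~\ref{sect:ZarEx}--\ref{sect:Injth} are designed to deliver, and this is where the real content of the proof lives.
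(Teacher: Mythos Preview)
Your approach is essentially identical to the paper's: a two-term Cousin-type resolution on the one-dimensional scheme $\affl_K$, with the excision theorems (Corollary~\ref{cor:afflzar-ex} and Theorem~\ref{th:et-ex}) and the injectivity theorem (Theorem~\ref{th:affZar-inj}) supplying both the identification of stalks and the computation of $\ker d$ and $\coker d$. The only cosmetic difference is that you pre-apply excision to write the stalk at $x$ as the global quotient $\cal F(\affl_K-x)/\cal F(\affl_K)$, whereas the paper keeps the henselian local quotient $\cal F(X^h_x-x)/\cal F(X^h_x)$ and invokes excision only when computing sections.

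Your description of the inclusion $\ker d \subseteq \cal F(V)$ is, however, misphrased. There is no ``correction by an element of $\cal F(\affl_K)$''; one does not modify $f$ at all. The argument (as the paper carries it out) is a direct extension: if $f$ already lifts to $f'\in\cal F(W)$ for some open $W\subset V$ and $x\in V\setminus W$, choose an open $W'$ with $W\subset W'\subset V$ and $W'\setminus W=\{x\}$. The obstruction to extending $f'$ to $\cal F(W')$ is its class in $\cal F(W'-x)/\cal F(W')$, which the \emph{injectivity} direction of Corollary~\ref{cor:afflzar-ex} and Theorem~\ref{th:et-ex} identifies with $d(f)_x=0$. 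Hence $f'$ extends, uniquely by Theorem~\ref{th:affZar-inj}, and one iterates over the finitely many points of $V\setminus W$. There is no ``delicate combinatorial input'' beyond the excision isomorphisms themselves---the paper handles this step in a single sentence---and Theorem~\ref{th:locZar-inj} is not needed here.
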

\begin{proof}

Let's denote $X=\affl_K$.
Consider the exact sequence of sheaves
\begin{equation}\label{eq:afflresolv}
0\to \widetilde{\cal F}_{Nis}\rightarrow 
\eta_*(\cal F(\eta)\stackrel{d}{\rightarrow}
\bigoplus\limits_{z\in \affl_K} 
z_*\left(\frac{\cal F(X^h_z-z)}{\cal F(X^h_z)}\right)\to 0, 
\end{equation}
where $z$ ranges over the set of closed points of $\affl_K$,
The sequence \eqref{eq:afflresolv} gives a flasque resolvent of $\widetilde{\cal F}_{Nis}$.
(Note that $\cal F(X^h_z-z)=\cal F(eta)$ since $dim\, X=1$.)

Let's compute cohomology presheaves of $\widetilde{\cal F}_{Nis}$ on $\affl_K$ using this resolvent.
Let $U\subset \affl_K$ be an open subscheme,
then 
$$
\widetilde{\cal F}_{Nis}=H^0(U,\widetilde{\cal F}_{Nis})=\Ker(
\cal F(\eta)\to \bigoplus\limits_{z\in U} 
z_*\left(\frac{\cal F(X^h_z-z)}{\cal F(X^h_z)}\right) )
.$$
So $H^0(U,\widetilde{\cal F}_{Nis})$ is the subset in $\cal F(\eta)$ consisting of 
elements $a$ that has a lift to a germ at each closed point of $U$.

Let $a$ be such an element in $\cal F(\eta)$.
Then for some open $V\subset U$ there is a section $\pri a\in \cal F(V)$, $\eta^*(\pri a)=a$, 
and using injectivity in the Zariski and etale excision on $\affl_K$ (corollary \ref{cor:afflzar-ex}, theorems \ref{th:et-ex}) we can consequently attach points of the complement $U\setminus V$ to $U$ and find an element $\tilde a\in \cal F(U)\colon \eta^*(\tilde a)=a$.
Thus homomorphism $\cal F(U)\to \widetilde{\cal F}_{Nis}$ is surjective.

On other hand, the injectivity on affine line (theorem \ref{th:affZar-inj}) implies that the composition 
$\cal F(U) \to \widetilde{\cal F}_{Nis}(U)\to \cal F(\eta)$ is injection.
Thus $\cal F(U)\to \widetilde{\cal F}_{Nis}$ is isomorphism.

Since the length of the resolvent \eqref{eq:afflresolv} is 2, it follows that 
$H^i_{Nis}(U,\widetilde{\cal F}_{Nis})=0$ for $i>1$.
Now let's prove that
$$
H^1(U,\widetilde{\cal F}_{Nis})=
\Coker(
\cal F(\eta)\to \bigoplus\limits_{z\in U} 
z_*\left(\frac{\cal F(X^h_z-z)}{\cal F(X^h_z)}\right) )=0
.$$
To do this it is enough to show that for any finite set of elements $a_i\in \cal F(X^h_{z_i}-{z_i})/\cal F(X^h_{z_i})$, $i=1\dots n$ 
there is an element $b\in \cal F(U-\{z_1,\dots z_n\})$, such that
$z^*(b) = a_i$, where 
$$z^*\colon \cal F(U-\{z_1,\dots z_n\})/\cal F(U-\{z_1,\dots \hat{ z_i}\dots z_n\})
\to \cal F(X^h_{z_i}-{z_i})/\cal F(X^h_{z_i})
. $$
The claim follow from the surjectivity of the excision homomorphisms in corollary \ref{cor:afflzar-ex} and theorem \ref{th:et-ex}.
\end{proof}

\begin{theorem}\label{th:HIAssZar}
The Nisnevich sheaf $\widetilde{\cal F}_{Nis}$ associated with homotopy invariant presheave with GW-transfers is homotopy invariant.
\end{theorem}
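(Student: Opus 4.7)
The plan is to verify the pullback $p^{*}\colon \widetilde{\cal F}_{Nis}(X)\to \widetilde{\cal F}_{Nis}(\affl\times X)$ is an isomorphism for every $X\in Sm_{k}$ by reducing to Nisnevich stalks. Both $X\mapsto\widetilde{\cal F}_{Nis}(X)$ and $X\mapsto\widetilde{\cal F}_{Nis}(\affl\times X)$ are Nisnevich sheaves on $Sm_{k}$ (the second because flat pullback preserves Nisnevich covers), so it suffices to show that for every essentially smooth henselian local $V$ the induced map $\widetilde{\cal F}_{Nis}(V)\to \widetilde{\cal F}_{Nis}(\affl\times V)$ is bijective. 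Since the zero section $i_{0}\colon V\hookrightarrow \affl\times V$ provides a retraction $i_{0}^{*}$ of $p^{*}$, the pullback is automatically split injective and only surjectivity requires argument.

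Given $\alpha\in\widetilde{\cal F}_{Nis}(\affl\times V)$, I would form $\beta:=\alpha-p^{*}(i_{0}^{*}\alpha)$, so that $i_{0}^{*}\beta=0$; the task is then to deduce $\beta=0$. Restricting to the generic fiber $\affl_{K}\hookrightarrow \affl\times V$, where $K$ is the residue field at the generic point of $V$ (a geometric extension of $k$), Theorem~\ref{th:affcoh} identifies $\widetilde{\cal F}_{Nis}(\affl_{K})\simeq \cal F(\affl_{K})$, and the homotopy invariance of the presheaf $\cal F$ identifies $\cal F(\affl_{K})$ with $\cal F(K)\simeq \widetilde{\cal F}_{Nis}(\Spec K)$, compatibly with the retraction $i_{0}^{*}$. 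Hence $\beta|_{\affl_{K}}=0$.

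The remaining step is to promote vanishing on the generic fiber to vanishing on all of $\affl\times V$. This amounts to injectivity of the restriction
$$\widetilde{\cal F}_{Nis}(\affl\times V)\hookrightarrow \widetilde{\cal F}_{Nis}(\affl_{K})=\varinjlim_{V^{\prime}\subset V}\widetilde{\cal F}_{Nis}(\affl\times V^{\prime}),$$
where $V^{\prime}$ runs through dense Zariski opens of $V$. This is the main obstacle and has the character of a Gersten-style injectivity for the associated sheaf on a relative affine line over a local base. The natural route is to first establish the companion computation $\widetilde{\cal F}_{Nis}(\affl\times V)\simeq \cal F(\affl\times V)$ (the relative analog of Theorem~\ref{th:affcoh} over a local base $V$), by upgrading its flasque-resolution argument via a Cousin-style stratification of $\affl\times V$ by codimension of support and iterated application of the excision Theorems~\ref{th:et-ex} and~\ref{th:afflUzar-ex} together with the injectivity Theorem~\ref{th:locZar-inj}. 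Once that companion computation is in place the required surjectivity is immediate: both $\widetilde{\cal F}_{Nis}(V)$ and $\widetilde{\cal F}_{Nis}(\affl\times V)$ coincide with $\cal F(V)$ and $\cal F(\affl\times V)$ respectively, and the pullback identifies with the isomorphism given by homotopy invariance of $\cal F$.
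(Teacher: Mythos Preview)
Your identification of the obstacle is correct, but the route you propose to resolve it is both unnecessarily heavy and, as stated, circular. The ``companion computation'' $\widetilde{\cal F}_{Nis}(\affl\times V)\simeq \cal F(\affl\times V)$ for henselian local $V$ is exactly Lemma~\ref{lm:strinv-locbase} (case $i=0$), and the paper's proof of that lemma uses Theorem~\ref{th:HIAssZar} as the base of its induction. So you cannot invoke it here, and the sketch you give (Cousin stratification plus excision) does not obviously yield an independent proof: the points of $\affl\times V$ are henselizations of dimension $\dim V+1$, not curves, so the one-dimensional argument of Theorem~\ref{th:affcoh} does not transplant directly.

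The paper's argument is much shorter and avoids all of this. One does not pass to henselian $V$ at all; for arbitrary smooth $X$ one uses the square
\[
\xymatrix{
\widetilde{\cal F}_{Nis}(\affl_X)\ar[r]\ar[d]_{i_0^*} & \widetilde{\cal F}_{Nis}(\affl_K)\ar[d]^{\simeq}\\
\widetilde{\cal F}_{Nis}(X)\ar[r] & \widetilde{\cal F}_{Nis}(K)
}
\]
and observes that the horizontal arrows are injective. This injectivity is the point you missed: it is an immediate consequence of Theorem~\ref{th:locZar-inj} for the \emph{presheaf} $\cal F$. Indeed, a section of $\widetilde{\cal F}_{Nis}$ vanishes iff all its Nisnevich stalks vanish; the stalk at $y$ is $\cal F(Y^h_y)$, and Theorem~\ref{th:locZar-inj} says this injects into its value on any dense open, in particular into the generic stalk. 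Hence a section of $\widetilde{\cal F}_{Nis}(\affl_X)$ vanishing on $\affl_K$ has zero stalk everywhere. Combined with the right vertical isomorphism (Theorem~\ref{th:affcoh} plus homotopy invariance of $\cal F$), this gives injectivity of $i_0^*$, and surjectivity is the trivial direction.
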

\begin{remark}For Witt-correspondences this was proved in \cite{AD_WtrSh}.\end{remark}
\begin{proof}
Let $X\in Sm_k$ and $K=k(X)$.
The theorem states that for any $X\in Sm_k$, $i_X^*\colon \widetilde{\cal F}_{Nis}(\affl_X)\to \widetilde{\cal F}_{Nis}(X)$ is isomorphism (for zero section $i_X\colon X\to \affl_X$).

Since the projection $\affl_X\to X$ is right inverse for $i_X$, the homomorphism $i_X^*$ is surjective.
 
To prove injectivity consider the commutative square
$$\xymatrix{
\widetilde{\cal F}_{Nis}(\affl_X) \ar@{^(->}[r]^{J^*}\ar[d]^{i^*_{X}}&\widetilde{\cal F}_{Nis}(\affl_K)\ar[d]^{i^*_{K}}\\
\widetilde{\cal F}_{Nis}(X)\ar@{^(->}[r]^{j^*}&\widetilde{\cal F}_{Nis}(K)\\
}
,$$
where $K=k(X)$.
Theorem \ref{th:locZar-inj} yields that the horizontal arrows are injections,
and the right vertical arrow is isomorphism by theorem \ref{th:affcoh}.
The claim follows.
\end{proof}

Now we prove the main result of the article:
\begin{theorem}\label{th:strhominv}
For any homotopy invariant presheave with GW-transfers $\cal F$,
the presheaves of Nisnevich cohomologies of associated sheaf $h^i_{nis}( \widetilde{\cal F}_{nis})$ 
are homotopy invariant for all $i\geq 0$,
i.e. $$H_{nis}^*( \affl\times X , \widetilde{\cal F}_{nis} ) = H_{nis}^*( X , \widetilde{\cal F}_{nis} )$$ for $X\in Sm_k$.
\end{theorem}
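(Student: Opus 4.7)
The plan is to invoke the Leray spectral sequence
$$E_2^{p,q} = H^p_{nis}(X, R^q p_* \widetilde{\cal F}_{nis}) \Rightarrow H^{p+q}_{nis}(\affl\times X, \widetilde{\cal F}_{nis})$$
for the projection $p \colon \affl\times X \to X$, and to reduce the theorem to verifying (a) $p_* \widetilde{\cal F}_{nis} \simeq \widetilde{\cal F}_{nis}$ and (b) $R^q p_* \widetilde{\cal F}_{nis} = 0$ for all $q \geq 1$. Claim (a) is immediate from Theorem \ref{th:HIAssZar}, since $(p_* \widetilde{\cal F}_{nis})(U) = \widetilde{\cal F}_{nis}(\affl\times U) = \widetilde{\cal F}_{nis}(U)$ for every $U \in \Sm_k$. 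Given (a) and (b), the spectral sequence collapses and yields the desired isomorphism $H^i_{nis}(\affl\times X, \widetilde{\cal F}_{nis}) \simeq H^i_{nis}(X, \widetilde{\cal F}_{nis})$.

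For claim (b), I would first argue that each higher direct image $R^q p_* \widetilde{\cal F}_{nis}$ inherits the structure of a homotopy invariant Nisnevich sheaf with GW-transfers. The transfers should be induced from those of $\widetilde{\cal F}_{nis}$ via a resolution by Nisnevich-acyclic sheaves with GW-transfers; homotopy invariance follows from the base change square for the projection $\affl\times U \to U$ together with the homotopy invariance of $\widetilde{\cal F}_{nis}$ itself provided by Theorem \ref{th:HIAssZar}.

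Next, compute the Nisnevich stalk at the generic point $\eta$ of a connected component of $X$. Since Nisnevich cohomology commutes with cofiltered limits of Noetherian schemes along affine transition morphisms,
$$(R^q p_* \widetilde{\cal F}_{nis})_\eta = \varinjlim_{\emptyset \neq U \subset X} H^q_{nis}(\affl\times U, \widetilde{\cal F}_{nis}) = H^q_{nis}(\affl_{k(X)}, \widetilde{\cal F}_{nis}).$$
Because $k(X)/k$ is a geometric extension, Theorem \ref{th:affcoh} forces this group to vanish for every $q \geq 1$. Finally, the injectivity theorem \ref{th:locZar-inj}, combined with its cofiltered-limit upgrade embedding $\widetilde{\cal G}(X_x) \hookrightarrow \widetilde{\cal G}(k(X))$ for every essentially smooth local $X_x$, implies that any homotopy invariant Nisnevich sheaf with GW-transfers embeds into the constant sheaf on its generic stalks. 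Applied to $\widetilde{\cal G} = R^q p_* \widetilde{\cal F}_{nis}$, whose generic stalks are zero by the preceding step, this forces $R^q p_* \widetilde{\cal F}_{nis} = 0$, completing (b).

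The chief technical obstacle is the first step of (b): one must establish that $R^q p_* \widetilde{\cal F}_{nis}$ genuinely carries GW-transfers compatible with its underlying abelian sheaf structure. Formally this requires that the category of Nisnevich sheaves with GW-transfers admits enough injectives and that such injectives remain acyclic for ordinary Nisnevich cohomology — the direct analog of the Suslin-Voevodsky comparison for $\mathbb{Z}_{tr}$-sheaves. Once this formal framework is secured, the argument is a direct transplantation of Voevodsky's original proof of strict homotopy invariance for presheaves with finite correspondences, with Theorem \ref{th:affcoh} and Theorem \ref{th:locZar-inj} playing the roles, respectively, of the field-level cohomology computation on $\affl_K$ and the injectivity theorem on local schemes.
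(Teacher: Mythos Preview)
Your Leray spectral sequence setup matches the paper's, and claim (a) is indeed Theorem \ref{th:HIAssZar}. The divergence is in claim (b). The paper does not put a GW-transfer structure on $R^q p_*\widetilde{\cal F}_{nis}$ at all; instead it kills $R^q p_*\widetilde{\cal F}_{nis}$ by computing its Nisnevich stalks directly. The stalk at a point $x\in X$ is $H^q_{nis}(\affl_{X^h_x},\widetilde{\cal F}_{nis})$ for the henselisation $X^h_x$, and Lemma \ref{lm:strinv-locbase} shows this vanishes for every essentially smooth local henselian base, by induction on $q$: the base case $q=0$ is Theorem \ref{th:HIAssZar}, and the inductive step uses the localisation sequence of Lemma \ref{lm:strhominv:restrinj}, which is where the Zariski and \'etale excision theorems \ref{th:afflUzar-ex}, \ref{th:et-ex} enter. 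Thus the paper bypasses the generic-point-plus-injectivity reduction entirely.

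Your route has a genuine gap at the homotopy invariance step. Asserting that $R^q p_*\widetilde{\cal F}_{nis}$ is homotopy invariant means
\[
H^q_{nis}(\affl\times\affl\times U,\widetilde{\cal F}_{nis})\;\simeq\;H^q_{nis}(\affl\times U,\widetilde{\cal F}_{nis}),
\]
which is exactly the theorem you are proving, applied to $\affl\times U$. The ``base change square'' does not give this: smooth base change for Nisnevich cohomology is not available in the form you need, and homotopy invariance of $\widetilde{\cal F}_{nis}$ as a sheaf (the $q=0$ statement) does not propagate to higher $q$ without further input. Without homotopy invariance you cannot invoke Theorem \ref{th:locZar-inj} on $R^q p_*\widetilde{\cal F}_{nis}$, so the generic-stalk computation via Theorem \ref{th:affcoh} is stranded. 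The second obstacle you flag --- that injectives in GW-sheaves are Nisnevich-acyclic --- is also not established anywhere in the paper, and the paper's argument is organised precisely so as not to need it. The fix is to follow the paper: prove the henselian local vanishing $H^q_{nis}(\affl_U,\widetilde{\cal F}_{nis})=0$ directly for all henselian local $U$, not just for $U=\operatorname{Spec} K$.
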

The deduction of this theorem from excision theorems \ref{th:afflUzar-ex} and \ref{th:et-ex} is similar 
to the original proof in the case of $Cor$-correspondences. 
We start with the case of $X=Spec\,K$ for a geometric extension $K/k$:
\begin{lemma}\label{lm:strinv-pointbase}
For a homotopy invariant presheave with GW-transfers $\cal F$ and any geometric extension $K/k$
$$
\widetilde{\cal F}_{nis}\big|_{\affl_K} \simeq \cal F\big|_{\affl_K},\;
H^i_{nis}( \affl_K , \widetilde{\cal F}_{nis}) \simeq 0 ,\quad i>0
.$$
\end{lemma}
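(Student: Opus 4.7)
The plan is to recognize that this lemma is essentially an immediate restatement of Theorem \ref{th:affcoh} proved earlier, so almost no new work is required. Theorem \ref{th:affcoh} establishes, for any geometric extension $K/k$, both that $\widetilde{\cal F}_{Nis}\big|_{\affl_K} \simeq \cal F\big|_{\affl_K}$ as presheaves on the small Nisnevich site of $\affl_K$, and that the cohomology presheaves $h^i_{Nis}(\widetilde{\cal F}_{Nis})\big|_{\affl_K}$ vanish for all $i>0$. Evaluating the second statement on $\affl_K$ itself gives $H^i_{nis}(\affl_K,\widetilde{\cal F}_{nis})=0$ for $i>0$, and the first statement restricted to the global section gives $\widetilde{\cal F}_{nis}(\affl_K)\simeq \cal F(\affl_K)$, which is the content of the lemma.

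If I were writing this out afresh rather than quoting Theorem \ref{th:affcoh}, the first step would be to use that $\dim\affl_K=1$, so that the Nisnevich cohomological dimension is at most one and $\widetilde{\cal F}_{Nis}$ admits the two-term flasque Cousin-type resolution by $\eta_*\cal F(\eta)$ and $\bigoplus_{z}z_*(\cal F(X^h_z-z)/\cal F(X^h_z))$ at closed points $z$ of $\affl_K$. Then the presheaf-sheaf identification $\widetilde{\cal F}_{nis}(\affl_K)\simeq \cal F(\affl_K)$ would follow in two moves: Theorem \ref{th:affZar-inj} (injectivity on the affine line over a geometric extension) shows that $\cal F(\affl_K)$ injects into the kernel of the differential $d$, and the surjectivity of the Zariski and \'etale excision maps from Corollary \ref{cor:afflzar-ex} and Theorem \ref{th:et-ex} allows one to successively extend a rational section across closed points of $\affl_K$, thereby realizing every element of the kernel. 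The vanishing of $H^1$ on $\affl_K$ is the cokernel of the same differential, which is killed by the same surjective excision statements applied to any finite family of germs. Higher cohomology vanishes for cohomological dimension reasons.

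The only nontrivial point in this deduction is ensuring that the surjectivity of excision (the hard direction in Corollary \ref{cor:afflzar-ex} and Theorem \ref{th:et-ex}) really permits simultaneous lifting at several distinct closed points, but this is automatic because the closed points decouple in the Cousin complex and one attaches them one at a time. So the deduction step is mechanical, and the real content of the lemma lies entirely in the excision and injectivity inputs already proved.
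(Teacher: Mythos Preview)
Your proposal is correct and matches the paper's own proof exactly: the paper's entire argument is the single sentence ``This is particular case of the theorem \ref{th:affcoh}.'' Your additional sketch of how one would rederive the result directly from the Cousin-type resolution, the injectivity theorem, and the excision isomorphisms is accurate and simply reproduces the proof of Theorem \ref{th:affcoh} itself.
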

\begin{proof}
This is particular case of the theorem \ref{th:affcoh}.
\end{proof}
The next step is the case of essential smooth local scheme $X$:
\begin{lemma}\label{lm:strinv-locbase}
For homotopy invariant presheave with GW-transfers $\cal F$ and essential smooth local henselian scheme $X$
$$
\widetilde{\cal F}_{nis}(\affl_X) \simeq \cal F(\affl_X),\;
H^i_{nis}(\affl_X, \widetilde{\cal F}_{nis}) \simeq 0 ,\quad i>0
.$$
\end{lemma}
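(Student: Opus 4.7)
My plan is to proceed by induction on $d = \dim X$, with the base case $d = 0$ being lemma \ref{lm:strinv-pointbase} applied directly, and to reduce the inductive step to the excision theorems \ref{th:afflUzar-ex}, \ref{th:et-ex} together with the injectivity theorem \ref{th:locZar-inj}. The overall strategy mirrors the way theorem \ref{th:affcoh} was deduced from the one-dimensional excision and injectivity results, but upgraded to the higher dimensional base.

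The identification $\widetilde{\cal F}_{nis}(\affl_X) \simeq \cal F(\affl_X)$ is the easier half. Theorem \ref{th:locZar-inj} embeds both $\cal F(\affl_X)$ and $\widetilde{\cal F}_{nis}(\affl_X)$ into $\cal F(\eta)$, where $\eta$ is the generic point of $\affl_X$. Given a section $s \in \widetilde{\cal F}_{nis}(\affl_X)$ and a generic representative $a \in \cal F(V)$ for some dense open $V \subset \affl_X$, one extends $a$ across the missing codimension-one subschemes of $\affl_X$ one stratum at a time, using the surjectivity parts of theorem \ref{th:afflUzar-ex} (Zariski excision on $\affl_U$) and theorem \ref{th:et-ex} (etale excision). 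This is the direct higher-dimensional analog of the argument in theorem \ref{th:affcoh}.

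For the higher cohomology vanishing I would use the Cousin/coniveau flasque resolution of $\widetilde{\cal F}_{nis}$ on $Y = \affl_X$:
\[
0 \to \widetilde{\cal F}_{nis} \to \bigoplus_{y \in Y^{(0)}} (i_y)_* M_y^0 \to \bigoplus_{y \in Y^{(1)}} (i_y)_* M_y^1 \to \cdots,
\]
where each $M_y^p$ is local cohomology at $y$ and $(i_y)_*$ is pushforward of the skyscraper. By etale excision (theorem \ref{th:et-ex}) each $M_y^p$ is computed on the Nisnevich henselization $(\affl_X)^h_y$, which after applying the homotopy invariance of the sheafification (theorem \ref{th:HIAssZar}) can be identified with local data over a henselian essentially smooth base of strictly smaller dimension than $d$. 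Invoking the inductive hypothesis and, at the bottom of the induction, lemma \ref{lm:strinv-pointbase}, each $M_y^p$ reduces to a group of the form $\cal F(\affl_K)/\cdots$ over a residue field $K$, whose positive cohomology vanishes. This yields exactness of the resolution and vanishing of global cohomology in positive degrees.

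The main obstacle will be showing that the Cousin differentials interact correctly with the etale excision identifications, i.e. that specialization maps between henselizations at nested points respect the transfer structure inherited from $\cal F$. Concretely, one has to track how the explicit GW-correspondences constructed in the proofs of propositions \ref{prop:et-ex:inj:GWCor} and \ref{prop:et-ex:sur:GWCor} behave under iterated base changes, and verify that the reductions on each stratum glue into exactness of the full Cousin complex after applying global sections. The use of the surjectivity in the excision theorems (and not just injectivity) is essential here, as it is what produces the lifts that make the differentials exact rather than merely left-exact.
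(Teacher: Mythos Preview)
Your induction variable and the paper's differ in a way that matters. The paper does \emph{not} induct on $\dim X$; it inducts on the cohomological degree $i$, over \emph{all} homotopy invariant presheaves with GW-transfers simultaneously. The base $i=0$ is theorem~\ref{th:HIAssZar}. For the step, the crucial input is a purity-type sublemma (proved inside lemma~\ref{lm:strhominv:restrinj} from theorems~\ref{th:afflUzar-ex} and~\ref{th:et-ex}):
\[
\Coker_{ShNis}\bigl(\cal F \xrightarrow{\varepsilon} j_*j^*(\cal F|_{X\times Y})\bigr)_{nis}\;\simeq\; i_*\bigl(\cal F_{-1}|_{Z\times Y}\bigr),
\]
where $\cal F_{-1}$ is the contraction of definition~\ref{def:F-1F(affl/0)}. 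This yields a localization long exact sequence
\[
\cdots \to H^{i-1}(Z\times\affl/0,\cal F_{-1}) \to H^i(X\times\affl/0,\cal F) \to H^i(U\times\affl/0,\cal F) \to \cdots,
\]
and since $\cal F_{-1}$ is again homotopy invariant (lemma~\ref{lm:F1F(affl/0)}), the inductive hypothesis on $i-1$ kills the left term. One then shrinks $Z$ stratum by stratum (a secondary descending induction on $\dim Z$, using that $k$ is perfect) until the class dies.

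Your Cousin/coniveau plan is not wrong in spirit, but it has a real gap. Exactness of the Cousin complex on $\affl_X$ is a Gersten-type statement, and you have not explained how to obtain it from the available excision theorems; in Voevodsky's framework this is typically a \emph{consequence} of strict homotopy invariance, not an input to it. Your claim that the local cohomology terms $M_y^p$ ``can be identified with local data over a henselian essentially smooth base of strictly smaller dimension than $d$'' is exactly the missing purity step, and to prove it you will end up needing the identification with $\cal F_{-1}$ (iterated) anyway. The paper sidesteps the full Cousin complex by working with a single open--closed pair at a time and invoking the $\cal F_{-1}$ sublemma; that is the idea you are missing.
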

\begin{remark}
\end{remark}

To prove of the last lemma we use the following notations: 
\begin{definition}\label{def:F-1F(affl/0)}
For any presheave (with GW-transfers) let's denote 
\begin{gather*}
\cal F_{-1}(-) = \Coker(\cal F(-) \to \cal F(-\times \bGm) )\\
\cal F(-\times \affl/0) = \Coker(\cal F(-) \to \cal F(-\times \affl) )
\end{gather*}
This defines the presheaves with GW-transfers.
(Note that we consider this just as notation,
though these presehave are in fact internal-hom-presheaves represented by $\bZ(\bGm)/\bZ(1)$ and $\bZ(\affl)/\bZ(0)$.) 
\end{definition}
\begin{lemma}\label{lm:F1F(affl/0)}
The functors 
$\cal F\mapsto \cal F_{-1}$ and $\cal F\mapsto \cal F(-\times\affl/0)$
are exact in the category of presheaves (with GW-transfers).

If $\cal F$ is homotopy invariant 
or if it is a Nisnevich sheaf, then presehaves
$\cal F_{-1}$ and $\cal F(-\times \affl/0)$ are of such type too.
\end{lemma}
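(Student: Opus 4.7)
The proof plan is to observe that every part of the lemma reduces to one formal fact: the natural maps
\[
\cal F(-)\to \cal F(-\times\bGm),\qquad \cal F(-)\to \cal F(-\times\affl),
\]
induced by the projections, are naturally split monomorphisms, split respectively by the unit section $\pt\to\bGm$ and the zero section $\pt\to\affl$ (viewed as maps in $GWCor$ via the functor of Definition \ref{def:funSmGWW}). Hence in the (abelian) category of presheaves with GW-transfers the cokernels $\cal F_{-1}$ and $\cal F(-\times\affl/0)$ are functorial direct summands of $\cal F(-\times\bGm)$ and $\cal F(-\times\affl)$ respectively.

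For the exactness claims, note that for any fixed smooth $Y$ the evaluation functor $\cal F\mapsto \cal F(-\times Y)$ is exact, since (co)kernels of morphisms of presheaves (with GW-transfers) are computed termwise. Thus $\cal F\mapsto \cal F(-\times\bGm)$ and $\cal F\mapsto \cal F(-\times\affl)$ are exact, and a functorial direct summand of an exact functor is exact. This settles exactness of $\cal F\mapsto\cal F_{-1}$ and $\cal F\mapsto \cal F(-\times\affl/0)$.

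For preservation of homotopy invariance, I would apply $\cal F$ to the projection $X\times\affl\to X$ (and to $X\times\bGm\times\affl\to X\times\bGm$). Homotopy invariance of $\cal F$ gives natural isomorphisms $\cal F(X)\simeq\cal F(X\times\affl)$ and $\cal F(X\times\bGm)\simeq \cal F(X\times\bGm\times\affl)$, and passing to cokernels yields $\cal F_{-1}(X)\simeq \cal F_{-1}(X\times\affl)$; the same Fubini-style argument gives $\cal F(X\times\affl/0)\simeq \cal F(X\times\affl\times\affl/0)$.

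For preservation of the sheaf property, note that pulling back a Nisnevich cover $\{U_i\to X\}$ along the projection $X\times Y\to X$ produces a Nisnevich cover $\{U_i\times Y\to X\times Y\}$, so if $\cal F$ is a Nisnevich sheaf then so is $X\mapsto\cal F(X\times Y)$ for any $Y$. Therefore $\cal F(-)$ and $\cal F(-\times\bGm)$ (resp.\ $\cal F(-\times\affl)$) are Nisnevich sheaves, and the functorial direct summand $\cal F_{-1}$ (resp.\ $\cal F(-\times\affl/0)$) is a Nisnevich sheaf as well, since a direct summand of a sheaf in an abelian presheaf category is a sheaf. There is no real obstacle here; the lemma is entirely formal, with the splitting by the unit/zero section doing all the work.
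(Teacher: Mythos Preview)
Your proof is correct and follows essentially the same approach as the paper: the paper's one-line proof simply observes that the projections $\bGm\to\pt$ and $\affl\to\pt$ are split by the unit sections, so the relevant cokernels are natural direct summands, and everything follows formally. You have spelled out the consequences (exactness, preservation of homotopy invariance, preservation of the sheaf condition) in more detail than the paper does, but the underlying argument is identical.
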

\begin{proof}
The claim follows immediate from that the canonical projections $\bGm\to pt$ and $\affl\to pt$ have splitting by unit sections,
and so the homomorphisms $\cal F(-)\to \cal F(-\times\bGm)$ and $\cal F(-)\to \cal F(-\times\affl)$ are surjective and splitting.
\end{proof}
\begin{lemma}\label{lm:strhominv:restrinj}
Let $U$ be an open subscheme of an essentially smooth local henselian scheme $X$,
such that $Z = X\setminus U$ is essentially smooth and $H^{i-1}(Z\times \affl/0 , \cal F)=0$ for some $i>0$.
Then the restriction homomorphism
$$H^i_{nis}(X \times \affl/0 ,\widetilde{\cal F}_{nis} )\to H^i_{nis}( U \times \affl/0, \widetilde{\cal F}_{nis} )$$
is injective.
\end{lemma}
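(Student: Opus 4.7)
My plan is to pass to the auxiliary homotopy invariant presheaf with GW-transfers $\cal G := \cal F(-\times \affl/0)$ of Definition \ref{def:F-1F(affl/0)}. By Lemma \ref{lm:F1F(affl/0)} it is a homotopy invariant presheaf with GW-transfers, and its Nisnevich sheafification satisfies $\widetilde{\cal G}_{nis}(Y) \simeq \widetilde{\cal F}_{nis}(Y \times \affl)/\widetilde{\cal F}_{nis}(Y)$ for smooth $Y$. In these terms the hypothesis reads $H^{i-1}(Z, \widetilde{\cal G}_{nis}) = 0$ and the assertion becomes injectivity of the restriction $H^i(X, \widetilde{\cal G}_{nis}) \to H^i(U, \widetilde{\cal G}_{nis})$.

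Applying the Nisnevich localization long exact sequence for the closed immersion $i\colon Z \hookrightarrow X$ with open complement $j\colon U \hookrightarrow X$,
$$\cdots \to H^{i-1}(U, \widetilde{\cal G}_{nis}) \to H^i_Z(X, \widetilde{\cal G}_{nis}) \to H^i(X, \widetilde{\cal G}_{nis}) \to H^i(U, \widetilde{\cal G}_{nis}) \to \cdots,$$
the desired injectivity is equivalent to the vanishing $H^i_Z(X, \widetilde{\cal G}_{nis}) = 0$.

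To establish this vanishing I would first upgrade the \'etale excision of Theorem \ref{th:et-ex} from the level of presheaf values to Nisnevich cohomology with supports: via a Godement-type flasque resolution of $\widetilde{\cal G}_{nis}$ the group $H^i_Z(X, \widetilde{\cal G}_{nis})$ depends only on a Nisnevich neighbourhood of $Z$ in $X$. Using the geometric construction of Lemma \ref{lm:et-ex:relCures} I replace $(X, Z)$ up to such a neighbourhood by (essentially) a relative smooth curve with a good compactification over an essentially smooth local base $S$, with $Z$ finite over $S$. On this relative-curve model the Cousin-type flasque resolution used in the proof of Theorem \ref{th:affcoh} applies fibrewise: it has length two and its degree-one term is a direct sum of skyscraper-like quotients $\cal G(X^h_w-w)/\cal G(X^h_w)$. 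A direct computation following the template of Theorem \ref{th:affcoh}, combined with the Zariski excision Theorem \ref{th:afflUzar-ex} and the injectivity Theorem \ref{th:locZar-inj}, identifies $H^i_Z(X, \widetilde{\cal G}_{nis})$ with a subquotient of $H^{i-1}(Z, \widetilde{\cal G}_{nis})$, which vanishes by hypothesis.

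The main obstacle is the upgrade of the excision theorems, which are stated at the level of presheaf values ($H^0$), to local cohomology in arbitrary degree; this is what forces the reduction to the relative-curve model of Lemma \ref{lm:et-ex:relCures} and the explicit short flasque resolution, rather than a direct derived-functor argument. A secondary subtlety is the compatibility of the $(-\times \affl/0)$-construction with Nisnevich sheafification and with the $\Coker$ of the splitting $\cal F(-) \hookrightarrow \cal F(-\times\affl)$, which is handled by the exactness claim in Lemma \ref{lm:F1F(affl/0)} together with the fact that sheafification is exact.
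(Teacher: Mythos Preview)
Your reformulation via $\cal G = \cal F(-\times\affl/0)$ and the localization long exact sequence is sound and is essentially how the paper organizes the argument (working with a general factor $Y$ and then setting $Y=\affl/0$). The gap is in how you propose to control $H^i_Z(X,\widetilde{\cal G}_{nis})$.

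The paper does \emph{not} upgrade the excision theorems to higher cohomological degree, and in fact avoids that issue entirely. The key observation you are missing is that, since $X$ is local henselian and $Z\subset X$ is essentially smooth, there is a tubular-neighbourhood identification $X\simeq (Z\times\affl)^h_Z$. Using this together with the $H^0$-level excision statements (Theorems~\ref{th:et-ex} and~\ref{th:afflUzar-ex}), one computes the cokernel \emph{sheaf} stalkwise and obtains a natural isomorphism of Nisnevich sheaves
\[
\Coker_{ShNis}\bigl(\widetilde{\cal F}_{nis}\big|_{X\times Y}\xrightarrow{\varepsilon} j_*j^*(\widetilde{\cal F}_{nis}\big|_{X\times Y})\bigr)\;\simeq\; i_*\bigl((\cal F_{-1})\big|_{Z\times Y}\bigr).
\]
This is a statement about $H^0$ only; the passage to degree $i$ is then handled by the long exact sequence of the resulting short exact sequence of sheaves, which places $H^{i-1}(Z\times\affl/0,\cal F_{-1})$ immediately before the restriction map.

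By contrast, your proposed route through Lemma~\ref{lm:et-ex:relCures} and a fibrewise Cousin resolution does not clearly compute $H^i_Z(X,\widetilde{\cal G}_{nis})$: the length-two resolution of Theorem~\ref{th:affcoh} is for a curve over a field, and running it fibrewise over the base $S$ yields information about higher direct images along $\cal X\to S$, not absolute Nisnevich cohomology of $X$. More to the point, your asserted conclusion that $H^i_Z(X,\widetilde{\cal G}_{nis})$ is a subquotient of $H^{i-1}(Z,\widetilde{\cal G}_{nis})$ with \emph{no twist} is not what the correct computation gives: the paper's sheaf-level identification shows the relevant group is $H^{i-1}(Z\times\affl/0,\cal F_{-1})$, with the $(-)_{-1}$ shift coming precisely from the normal direction in the tubular neighbourhood. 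In the inductive application this distinction is harmless (the induction hypothesis covers all homotopy-invariant presheaves with GW-transfers, hence $\cal F_{-1}$ as well), but your outline does not produce that term and does not explain where the degree shift from $i$ to $i-1$ would come from. The tubular-neighbourhood description of $X$ is the missing idea.
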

\begin{proof}[Proof of lemma \ref{lm:strhominv:restrinj}.]
Denote by 
$i_Y\colon (Z\times Y)_{nis}\hookrightarrow (X\times Y)_{nis}$,  $j\colon (U\times Y)_{nis}\hookrightarrow (X\times Y)_{nis}$, 
the morphisms of small Nisnevich sites, for any $Y\in Sm_k$.
 

The Zariski excision on the relative affine line over the local base (theorems \ref{th:et-ex}) and the etale excision (theorem \ref{th:afflUzar-ex})
yields the following:
\begin{sublemma}
There is the following natural isomorphism of sheaves
$$
{\Coker_{ShNis}(\cal F\xrightarrow{\varepsilon} j_*(j^*( \cal F\big|_{X\times Y} )) )}_{nis}
\simeq 
i_*( \cal F_{-1}\big|_{Z\times Y} )
$$
for any $Y\in Sm_k$.
\end{sublemma}
\begin{proof}
Since $X$ is an essentially smooth local henselian scheme, then there is an isomorphism
$(f,p)\colon X \simeq (Z\times\affl)^h_Z$
where $Z$ is considered as the subscheme of $Z\times\affl$ via the zero section 

Let $V\in X_{nis}$. Denote the canonical morphism by $v\colon V\to X$, and let $Z^\prime = V\times^{(f,p)\circ v,i}_{Z\times\affl} Z$. 
Next consider the Nisnevich neighbourhood $V^\prime$ of $Z^\prime$ in $V$ defined as
$V^\prime = Z^\prime\times_Z V - \Delta_Z^\prime$, where $\Delta_{Z^\prime}$ denotes the diagonal in $Z^\prime\times Z^\prime$.
The sequence of etale morphisms 
\begin{equation}\label{eq:lm:str:NeighSeq}
 (V,Z^\prime)\leftarrow (V^\prime, Z^\prime)  \xrightarrow{ p\times id_{Z^\prime} } (Z^\prime\times\affl,Z^\prime) 
\end{equation}
induce homomorphisms
\begin{equation}\label{eq:lm:str:Fneighseq}
\cal F(V-Z^\prime)/\cal F(V) \rightarrow
\cal F(V^\prime-Z^\prime)/\cal F(V^\prime) \leftarrow
\cal F( (Z^\prime\times\affl)_{x}-Z^\prime )/\cal F((Z^\prime\times\affl) )
.\end{equation}
Since \eqref{eq:lm:str:NeighSeq} is natural in $V$, 
this defines homomorphisms of presheaves
$$\Coker_{PreSh}( \cal F\xrightarrow{\varepsilon} j_*(j^*(\cal F\big|_X)) )_{nis} ) 
\leftarrow 
\cal E 
\rightarrow 
i_*( \cal F_{-1}\big|_{Z} ).$$

Now consider the groups of germs at the point $x\in V$, i.e. substitute the henselisation $V^h_x$ in the equality \eqref{eq:lm:str:NeighSeq}.
If $x\not \in Z^\prime$, then all three germs are trivial.
Otherwise 
the first homomorphism becomes isomorphism by the definition of the Nisnevich neighbourhood $V^\prime$;
hence
consequently applying theorems \ref{th:et-ex} and \ref{th:afflUzar-ex} we get that the second homomrphism in \eqref{eq:lm:str:Fneighseq} is isomorphism too:
$$\cal F(V^h_x-Z^\prime_x)/\cal F(V^h_x)\simeq 
\cal F( (Z^\prime\times\affl)_{x}-Z^\prime )/\cal F((Z^\prime\times\affl)_{x}) \simeq 
\cal F( (Z^\prime\times\affl)_{x}-Z^\prime )/\cal F((Z^\prime\times\affl) ).$$

\end{proof}

Thus for any $Y\in Sm_k$ we get the short exact sequence of Nisnevich sheaves 
$
\cal F\big|_{X\times Y}\xrightarrow{\varepsilon} j_*(j^*(\cal F\big|_{X\times Y}))\to i_*(\cal F_{-1}\big|_{Z\times Y})
$. Hence there is the same sequence for any $Y$ in the idempotent completion of $Sm_k$.
Substituting $Y= \affl/0$ we get the long exact sequence of Nisnevich cohomology groups:
$$
\cdots \to H^{i-1}( Z\times\affl/0 , \cal F_{-1} ) \to 
H^i_{Nis}( X\times \affl/0 ,\cal F )\to 
H^i_{Nis}( U\times \affl/0 ,\cal F )\to 
\cdots
$$
The clam follows.

\end{proof}

\begin{proof}[Proof of the lemma \ref{lm:strinv-locbase}]
Using denotation \ref{def:F-1F(affl/0)} 
the claim is to prove that $H^i_{nis}( X\times \affl/0 , \cal F ) = 0$.
Let's prove this 
by the induction in respect to $i$. 
The base of the induction, i.e. the case $i=0$, is theorem \ref{th:HIAssZar}.
Suppose that the statement of theorem holds for all homotopy invariant presheaves for all $i<n$, for some $n$.


Let $a\in H^i_{Nis}(X\times\affl/0,\cal F)$.
By lemma \ref{lm:strinv-pointbase} $H^i_{Nis}(\eta\times\affl/0)=0$, where $\eta$ denotes the generic point of $X$, and hence
  $a\big|_{U\times\affl/0}=0$ 
    for some open affine subscheme $U\subset X$.

Since $k$ is perfect 
the generic point $\eta_Z$ of subscheme $Z=X\setminus U$ is smooth and by lemma \ref{lm:strhominv:restrinj} we have 
$a_{\eta_Z\times \affl/0} = 0$.
Hence $a\big|_{U_1\times\affl/0}=0$ 
for some open subscheme $U_1\subset X$, $U_1\ni \eta_{Z}$
(we use here that $X$ is local).
Denote $Z_1=X\setminus U_1$, then $dim\, Z_1<dim\,Z$.
Now consequently applying lemma \ref{lm:strhominv:restrinj}
we can increase the dimension of the closed subscheme $Z_*$ to zero, 
so the claim follows.
%
\end{proof}

\begin{proof}[Proof of the theorem \ref{th:strhominv}]
Let $\cal F$ be a $\affl$-homotopy invariant presheave with GW-transfers.
Consider the Leray spectral sequence for the morphism of small Nisnevich sites $pr\colon (X\times \bGm)_{nis}\to X_{nis}$: 
$$
H^p_{Nis}(X,R^i{pr}(\widetilde{\cal F}\big|_{X\times\bGm})) \Rightarrow H^{p+q}_{Nis}(X\times\affl,\widetilde{\cal F}_{nis}) 
.$$
Lemma \ref{lm:strinv-locbase} implies
$R^i{pr}(\cal F\big|_{U\times\affl})(U_x)\simeq 0$ for $i>0$ and 
$R^0{pr}(\cal F\big|_{U\times\affl})(U_x) = {pr_*}(\cal F\big|_{U\times\affl})(U_x)=\cal F(U_x\times \affl)$.
So 
the spectral sequence degenerates and
$$
H^i( X, \widetilde{\cal F}_{nis} ) \simeq 
H^i( X, {pr_*}(\widetilde{\cal F}_{nis}\big|_{U\times\affl}) ) \simeq
H^i( X, \widetilde{\cal F}_{nis}\big|_{U} )
.$$
\end{proof}

\begin{remark}
Since there is a functor $GWCor\to WCor$ and proofs are based on explicit constructions of correspondences,
then this yields the similar results for presheaves with Witt-transfers.
\end{remark}

\begin{corollary}
The Nisnevich cohomology presheaves of the associated sheave of a homotopy invariant presheaves with GW-transfers are representable in the motivic homotopy category $\mathbf H^{\affl}(k)$. 
\end{corollary}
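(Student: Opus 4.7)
The plan is to deduce this corollary from Theorem \ref{th:strhominv} together with the standard representability machinery of the Morel--Voevodsky motivic unstable homotopy category, specifically in the formulation given by Hornbostel \cite{Horn_ReprKOWitt}. The key point is that representability of Nisnevich cohomology of a sheaf of abelian groups in $\mathbf{H}^{\affl}(k)$ follows once the sheaf is \emph{strictly $\affl$-invariant}, meaning that all its higher Nisnevich cohomology presheaves are $\affl$-homotopy invariant.

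First I would recall the construction of the Eilenberg--MacLane simplicial Nisnevich sheaves $K(\widetilde{\cal F}_{nis},n)$ for $n\geq 0$. In the pointed simplicial Nisnevich homotopy category $\mathcal H^s_\bullet(k)$ one has the classical identification
$$[X_+, K(\widetilde{\cal F}_{nis},n)]_{\mathcal H^s_\bullet(k)} \simeq H^n_{nis}(X,\widetilde{\cal F}_{nis})$$
for every $X\in Sm_k$. The Morel--Voevodsky $\affl$-localization functor $L_{\affl}\colon \mathcal H^s_\bullet(k)\to \mathbf H^{\affl}_\bullet(k)$ is fully faithful on the subcategory of $\affl$-local objects, and an Eilenberg--MacLane object $K(\mathcal G,n)$ is $\affl$-local precisely when $\mathcal G$ is strictly $\affl$-invariant, i.e. when the projection $\affl\times X\to X$ induces isomorphisms in $H^i_{nis}(-,\mathcal G)$ for all $i\geq 0$ and all $X\in Sm_k$.

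The decisive input is Theorem \ref{th:strhominv}, which is precisely the statement that the Nisnevich sheaf $\widetilde{\cal F}_{nis}$ associated to a homotopy invariant presheaf with GW-transfers is strictly $\affl$-invariant. Once this is in hand, the object $K(\widetilde{\cal F}_{nis},n)$ is $\affl$-local, so it is preserved by $L_{\affl}$, and the displayed identification above descends to an isomorphism
$$[X_+, K(\widetilde{\cal F}_{nis},n)]_{\mathbf H^{\affl}_\bullet(k)} \simeq H^n_{nis}(X,\widetilde{\cal F}_{nis}),$$
which is exactly representability of the functor $H^n_{nis}(-,\widetilde{\cal F}_{nis})$ in the motivic homotopy category. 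The same argument applies to the Witt-transfers version via the factorisation $GWCor\to WCor$.

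There is essentially no obstacle internal to this corollary: the entire geometric content has already been absorbed into Theorem \ref{th:strhominv} (and thus into the excision and injectivity theorems of Sections \ref{sect:ZarEx}--\ref{sect:Injth}). The only care required is the purely formal verification that Hornbostel's criterion applies to a Nisnevich sheaf of abelian groups with no additional structure beyond being strictly $\affl$-invariant, which is standard once $\widetilde{\cal F}_{nis}$ is treated simply as a sheaf of abelian groups (forgetting the GW-transfers, which are only used to prove strict invariance).
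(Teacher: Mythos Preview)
Your proposal is correct and follows essentially the same approach as the paper: both reduce to Theorem \ref{th:strhominv} (strict $\affl$-invariance of $\widetilde{\cal F}_{nis}$) and then invoke Hornbostel's representability criterion from \cite{Horn_ReprKOWitt}. The only cosmetic difference is that the paper exhibits the representing object concretely as the Dold--Kan image of an injective resolution of $\widetilde{\cal F}_{nis}$ (and checks Nisnevich Mayer--Vietoris and homotopy invariance directly), whereas you phrase the same thing in terms of the Eilenberg--MacLane objects $K(\widetilde{\cal F}_{nis},n)$ being $\affl$-local.
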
\begin{proof}
Consider the simplicial presheave $\mathcal I_{s}$ corresponding (via the Dold-Kan correspondence)
to the injective resolvent $\mathcal I^\bullet$ of the associated sheave $\widetilde{\cal F}_{Nis}$ 
for a homotopy invariant presheave with GW-transfers $\mathcal F$.
Then since $\mathcal I^\bullet$ is a bounded above complex of injective Nisnevich sheaves, it follows that  
$\mathcal I_{s}$ fulfils the Nisnevich-Mayer-Viertoris, and 
theorem \ref{th:strhominv} implies that $\mathcal I_{s}$ fulfils homotopy invariance.
Thus the claim follows from theorem 3.1 of \cite{Horn_ReprKOWitt}.
\end{proof}

\end{document}